\newtheorem{thm}{Theorem}[section]
\newtheorem{cor}[thm]{Corollary}
\newtheorem{lem}[thm]{Lemma}
\newtheorem{prop}[thm]{Proposition}
\newtheorem{defn}[thm]{Definition}
\theoremstyle{remark}
\newtheorem{rem}{Remark}[section]
 \def\a{{\alpha}} 
 \def\b{{\beta}}
 \def\g{{\gamma}}
 \def\l{{\lambda}}
 \def\d{{\delta}}
 \def\s{{\sigma}}
 \def\bg{{\boldsymbol{\g}}} 
 \def\one{{\mathbf 1}}
 \def\zero{{\mathbf 0}}
 \def\la{{\langle}}
 \def\ra{{\rangle}} 
 \def\ve{{\varepsilon}}
 \def\jb{{\mathbf j}}
 \def\kb{{\mathbf k}}
 \def\mb{{\mathbf m}}
 \def\nb{{\mathbf n}}
 \def\ub{{\mathbf u}}
 \def\xb{{\mathbf x}}
 \def\yb{{\mathbf y}}
 \def\CV{{\mathcal V}}
 \def\CU{{\mathcal U}}
 \def\NN{{\mathbb N}}
 \def\RR{{\mathbb R}}
 \def\ZZ{{\mathbb Z}}
\begin{document}
 
\title{Sobolev Orthogonal Polynomials on a Simplex}

\author{Rab\.{I}a Akta\c{S}}
\address{Departments of Mathematics\\ Ankara University\\ Ankara 06100, Turkey}
\email{raktas@science.ankara.edu.tr}

\author{Yuan Xu} 
\address{Department of Mathematics\\ University of Oregon\\
    Eugene, Oregon 97403-1222.}\email{yuan@math.uoregon.edu}
\thanks{The work of the second author was supported in part by NSF Grant DMS-1106113}

\date{\today}
\keywords{Sobolev orthogonal polynomials, simplex, eigenfunctions, partial differential operators}
\subjclass[2000]{33C50, 33E30,42C05}

\begin{abstract}
The Jacobi polynomials on the simplex are orthogonal polynomials with respect to the weight 
function $W_\bg(x) = x_1^{\g_1} \cdots x_d^{\g_d} (1- |x|)^{\g_{d+1}}$ when all $\g_i > -1$ and 
they are eigenfunctions of a second order partial differential operator $L_\bg$. 
The singular cases that some, or all, $\g_1,\ldots,\g_{d+1}$ are $-1$ are studied in this paper. 
Firstly a complete basis of polynomials that are eigenfunctions of $L_\bg$ in each singular case is found. Secondly, these polynomials are shown to be orthogonal with respect to an inner product which is explicitly determined. This inner product involves derivatives of the functions, hence the name Sobolev orthogonal polynomials. 
\end{abstract}

\maketitle

\section{Introduction}
\setcounter{equation}{0}   

The purpose of this paper is to study the limiting case of classical orthogonal polynomials 
on the simplex when the weight function becomes singular. Let $T^d$ be the $d$-dimensional
simplex defined by 
$$
  T^d := \{x \in \RR^d: x_1 \ge 0, \ldots, x_d \ge 0, 1-|x| \ge 0 \},  
$$
where $|x| : =x_1+\cdots + x_d$. The classical weight function on $T^d$ is defined by 
\begin{equation}\label{Weight}
    W_\bg (x): = x_1^{\g_1} \cdots x_d^{\g_d} (1- |x|)^{\g_{d+1}}, \quad x \in T^d,  
\end{equation}
where $\g_i$ are real numbers, usually assumed to satisfy $\g_i > -1$ to ensure the integrability
of $W_\bg$ on $T^d$. Let $c_\bg = 1 \Big / \int_{T^d} W_\bg(x) dx$ denote the normalization 
constant.

Let $\Pi^d=\RR[x]$ be the ring of polynomials in $d$-variables and let $\Pi_{n}^{d}$ denote the 
subspace of polynomials of total degree at most $n$.  When all $\g_i > -1$, the bilinear form 
$$
       \la f, g\ra_\bg : = c_\bg \int_{T^d} f(x) g(x) W_\bg(x) dx
$$
defines an inner product on the space $\Pi^d$ of polynomials of $d$-variables. The orthogonal polynomials 
with respect to this inner product have been studied extensively (cf. \cite{DX}). In particular, let 
$\CV_n^d(W_\bg)$ denote the space of orthogonal polynomials of degree $n$ with respect to 
$\la \cdot, \cdot \ra_\bg$. Then, for all $P  \in \CV_n^d(W_\bg)$,
\begin{align} \label{diff-eqn}
 L_\bg P : = & \sum_{i=1}^d x_i(1 -x_i)   \frac {\partial^2 P} {\partial x_i^2}   - 
 2 \sum_{1 \le i < j \le d} x_i x_j \frac {\partial^2 P}{\partial x_i 
 \partial x_j}  \\ 
&\,  + \sum_{i=1}^d \left( \g_i +1-(|\bg|+d+1) x_i \right) \frac {\partial P}{\partial x_i} 
 =  -n  \left(n+|\bg| +d \right)P,  
\notag
\end{align} 
where $|\bg|:= \g_1 + \cdots + \g_{d+1}$.  In other words, orthogonal polynomials of degree $n$ are
eigenfunctions of the differential operator $L_\bg$.  

If some or all $\g_i$ equal to $-1$, the weight function becomes singular and orthogonal polynomials
with respect to $W_\bg$ are no longer well defined. In fact, the Jacobi polynomials on the simplex 
defined via the Rodrigue formula can be extended to the case of $\g_i \le -1$. In the case of all
$\g_i = -1$, they have been used to study certain approximation processes in \cite{Sa, W}, but this
family of polynomials does not provide a complete basis. On the other hand, the equation 
\eqref{diff-eqn} still makes sense if some, or all, $\g_i = -1$. Thus, it is natural to ask two questions: 
firstly if \eqref{diff-eqn} still has a complete basis of polynomial solutions, and secondly, when the answer
to the first question is affirmative, if these polynomial solutions are orthogonal with respect to an inner 
product. 

The main results of this paper answer both questions affirmatively. More precisely, for each 
singular case of some or all $\g_i = -1$, we have identified a complete basis of polynomials that 
are eigenfunctions of $L_\bg$, which are given explicitly, and we have found an inner product 
explicitly, with respect to which these polynomials are orthogonal.  The inner product turns out to 
involve derivatives of the functions, so that the orthogonality is not established in the usual 
$L^2(W_\bg)$ sense, but in a Sobolev space. Such orthogonal polynomials are named Sobolev 
orthogonal polynomials. 

The problems are motivated by a recent study \cite{PX, X2} of analogue problems on the unit ball, 
for which the classical weight function takes the form $W_\mu(x) = (1-\|x\|^2)^\mu$.  The orthogonal
polynomials with respect to $W_\mu$ on the ball are eigenfunctions of a second order differential 
operator $L_\mu$. In \cite{X2}, a family of orthogonal polynomials with respect to an inner product 
that involves the first partial derivatives on the unit ball was studied, which turned out to be the 
eigenfunctions of $L_{-1}$ as shown in \cite{PX}. 

The Sobolev orthogonal polynomials have been studied extensively when $d =1$ (cf. \cite{G}). 
The simplex $T^d$ becomes, when $d =1$, the interval $[0,1]$ and $W_\bg$ is the Jacobi weight 
function $x^{\a}(1-x)^{\b}$, where we have written $\a = \g_1$ and $\b = \g_2$. In this case, Sobolev 
orthogonal polynomials have been studied by several authors  (\cite{AFR, AMR,APP, KL}). More 
precisely, the Sobolev orthogonality of the Jacobi polynomials $\{P_n^{(-N,\b)}\}_{n \ge 0}$ was 
studied in \cite{APPR}, and the case $\{P_n^{(-N, -N)}\}_{n \ge 0}$ was studied in \cite{APP}, in 
both cases $N$ is a positive integer. The particular case of the Jacobi polynomials 
$\{ P_{n}^{( -1,-1)}\} _{n\ge 0}$ had been previously given in \cite{KL}. In the case of $d =2$, the 
simplex becomes a triangle and the weight function is usually denoted by $W_{\a,\b,\g}$. An 
observation in \cite{BDFP} shows that the monic basis for $W_{\a,\b,\g}$ is still a basis when 
$\g = -1$ and  $\a, \b > -1$, and they are orthogonal with respect to an inner product that involves 
the first order derivatives. In contrast to one variable, there are only a handful papers on Sobolev
orthogonal polynomials of several variables \cite{BDFP, PI, LL, PX, X1,X2}, see also \cite{FMPP}. 
It should be mentioned that the studies in \cite{X1,X2} are motivated by problems in numerical 
solution of Poisson equations \cite{AH} and in optics. Given this background, it is somewhat 
surprising that there have been so few studies of the Sobolev orthogonal polynomials of several 
variables. It is our hope that the results in this paper will help to kindle more interests on this topic. 

The paper is organized as follows. In the next section we recall results on Jacobi polynomials 
and classical orthogonal polynomials on the simplex, where we will also present several new 
properties on the simplex, including how orthogonal bases on the faces of $T^d$ arise 
from some of the orthogonal polynomials on $T^d$, which are of independent interest.
The main results are stated in the third section, where we will state the results for $d =2$ first to 
illustrate the results and illuminate how the results are obtained, as the results in $T^d$ require 
unavoidably heavy notations. Finally, the proofs of the main results, including several lemmas, 
are given in the fourth section. 

\section{Orthogonal polynomials on the simplex}
\setcounter{equation}{0}
   
This section contains background results on orthogonal polynomials that are necessary for 
latter sections.  After a brief subsection on the classical Jacobi polynomials, orthogonal 
polynomials on the triangle are described in the second subsection and those on the simplex 
are developed in full generality in the third subsection, and orthogonal polynomials on the 
faces of the simplex are discussed in the fourth subsection. 

\subsection{Jacobi polynomials}
The Jacobi weight function $v_{\a,\b}$ is defined by 
$$
  v_{\a,\b}(x): = (1-x)^\a (1+x)^\b,  \qquad \a,\b > -1.  
$$
The Jacobi polynomials are given explicitly by the Rodrigue formula
\begin{equation}\label{Jacobi-Rodrigue}
   P_{n}^{(\a,\b)} (x) = \frac{(-1)^{n}}{2^nn!} (1-x)^{-\a} (1+x)^{-\b} \frac{d^n} {d x^n} \left[(1-x)^{n+\a} (1+x)^{n+\b}\right]
\end{equation}
and they satisfy the orthogonality condition
\begin{align} \label{Jacobi-ortho}
  c_{\a,\b}  \int_{-1}^1 P_n^{(\a,\b)} (x) P_m^{(\a,\b)} (x) v_{\a,\b}(x) dx 
      =  h_n^{(\a,\b)} \delta_{n,m}, 
\end{align}
where 
\begin{align*}
       c_{\a,\b}: = \frac{\Gamma(\a+\b+2)}{2^{\a+\b+1} \Gamma(\a+1)\Gamma(\b+1)}, \quad
       h_n^{(\a,\b)}:= \frac{(\a+1)_n (\b+1)_n (\a+\b+n+1)}{n!(\a+ \b+2)_n (\a+\b+ 2n+1)}
\end{align*}
and $\delta_{n,m}$ is the Kronecker delta. The Jacobi polynomial $P_n^{(\a,\b)}$ also satisfies
the second order differential equation 
\begin{equation}\label{Jacobi-ode}
       (1-x^2)y''+\left[\b-\a-(\a+\b+2)x\right]y' = -n(\a+\b+n+1)y, 
\end{equation}
in other words, it is the eigenfunction of the differential operator in the left hand side with
the eigenvalue $-n(\a+\b+n+1)$. 

If one of $\a$ and $\b$ is a negative integer, then the weight function $v_{\a,\b}$ is no longer
integrable on $[-1,1]$. Assume $\a = - l$, $l \in \NN$, then we have \cite[(4.22.2)]{S}
\begin{equation}\label{Jacobi-negative}
  \binom{n}{l} P_n^{(-l,\b)}(x) = \binom{n+\b}{l} \left(\frac{x-1}{2}\right)^l P_{n-l}^{(l,\b)}(x),
\end{equation}
which is well defined for $n \ge l$. In the case of $\a=-1$, defining $P_0^{(-1,\b)} (x) =1$, then 
$\{P_n^{(-1,\b)}\}_{n \ge 0}$ is well defined and they still satisfy the differential equation \eqref{Jacobi-ode}
for all $n = 0,1,\ldots$. They are, however, no longer orthogonal polynomials in the sense of 
\eqref{Jacobi-ortho} but, as it turns out, orthogonal polynomials with respect to
the following Sobolev type inner product \cite {APPR, KL},
$$
  \la f, g \ra:= {\l}f(1)g(1)+ \int_{-1}^{1} (x+1)^{\b+1}f'(x) g'(x)dx, \quad \l > 0, \quad \b > -1.
$$
If both $\a = \b=-1$, then the Jacobi polynomials in \eqref{Jacobi-Rodrigue} are well defined for
$n \ge 2$. Furthermore, any linear polynomial will be a solution of the equation \eqref{Jacobi-ode}
when $n = 0$ and $1$. In fact, the polynomials $P_{0}^{(-1,-1)} (x)=1$, $P_1^{(-1,-1)} (x)= x+\mu$ 
and
\begin{equation*} 
  P_{n}^{(-1,-1)} (x)=\frac{1}{4}(x-1)(x+1)P_{n-2}^{(1,1)} (x),\quad n\ge2,
\end{equation*}
satisfy the equation \eqref{Jacobi-ode} for all $n = 0,1, 2, \ldots$. Furthermore, these polynomials 
are orthogonal with respect to the Sobolev type inner product 
$$
  \la f, g \ra := {\l_1}f(1)g(1)+  {\l_2}f(-1)g(-1)+ \int_{-1}^{1} f'(x) g'(x)dx
$$
where $\l_1$ and $\l_2$ are nonnegative numbers, not both zero, and the constant in 
$P_1^{(-1,-1)}(x)$ is given by $\mu=\frac {\l_2-\l_1} {\l_1+\l_2}$.

Since the simplex $T^d$ becomes $[0,1]$ when $d =1$, it is more convenient to consider 
the Jacobi polynomials on the interval $[0,1]$, which we denote by $J_n^{(\a,\b)}$, and 
normalized as 
$$
   J_n^{(\a,\b)}(x) := (1-x)^{-\a} x^{-\b} \frac{d^n} {d x^n} \left[(1-x)^{n+\a} x^{n+\b}\right].
$$
Up to a constant multiple, $J_n^{(\a,\b)}(x) = c P_n^{(\a,\b)}(2x-1)$. These polynomials are 
orthogonal on $[0,1]$ with respect to the weight function
$$
    w_{a,\b}(x) := (1-x)^\a x^\b, \quad  \a, \b > -1
$$
and they satisfy a second order differential equation
$$
        x(1-x)y''+\left[\b+1 - (\a+\b+2)x\right] y' = -  n( n + \a+\b +1)y 
$$

\subsection{Orthogonal polynomials on the triangle}
On the triangle $T^2 := \{(x,y) \in \RR^2: x,y \ge 0, 1-x-y \ge 0\}$, the Jacobi polynomials are orthogonal with respect to the weight function 
$$
  W_{\a,\b,\g}(x,y):= x^{\a} y^{\b} (1- x-y)^{\g},\quad \a,\b,\g> -1.
$$
More precisely, we define the inner product 
$$
        \la f, g \ra_{\a,\b,\g} := c_{\a,\b,\g} \int_{T^2} f(x,y) g(x,y) W_{\a,\b,\g} (x,y) dxdy, 
$$
where $c_{\a,\b,\g}$ is the normalization constant of $W_{\a,\b,\g}$ given by
$$
   c_{\a,\b,\g} := 1 \Big / \int_{T^2} W_{\a,\b,\g}(x,y) dxdy = \frac{\Gamma(\a+\b+\g+ 3)}{\Gamma(\a+1) \Gamma(\b+1) \Gamma(\g+1)}. 
$$
Let $\CV_n^2(W_{\a,\b,\g})$ denote the space of orthogonal polynomials of degree $n$ with respect 
to this inner product. Then $P \in \CV_n^2(W_{\a,\b,\g})$ if $\la P, q \ra_{\a,\b,\g} = 0$ for all $q \in \Pi_{n-1}^2$
and  $\dim \CV_n^2(W_{\a,\b,\g}) = (n+1)$. Among many bases for $\CV_n^2(W_{\a,\b,\g})$, one is 
given by the Rodrigue formula: For $0 \le k \le n$,  
\begin{align} \label{Rodrigue d=2}
P_{k,n}^{(\a,\b,\g)} (x,y) := & \left[ W_{\a,\b,\g}(x,y)\right]^{-1} \frac{\partial^{n}} {\partial x^{k} \partial y^{n-k}} \left[x^{\a + k} y^{\b + n-k} (1-x-y)^{\g+n} \right]. 
\end{align}
It should be noted that the elements of this basis are  not mutually orthogonal to each other. 
The triangle $T^2$ is symmetric under the permutation of $(x,y,1-x-y)$. Parametrizing the triangle 
differently leads to two more orthogonal bases of $\CV_n^2(W_{\a,\b,\g})$. Indeed, if we define 
\begin{equation} \label{bases d=2}
Q_{k} ^{n} (x,y):=P_{k,n}^{(\b,\a,\g)} (y,x)\quad \textit{and} \quad R_{k} ^{n} (x,y):=P_{k,n}^{(\g,\b,\a)} (1-x-y,y), 
\end{equation}
then both $\{R_{k} ^{n}: 0 \le k \le n\}$ and  $\{Q_{k} ^{n}: 0 \le k \le n\}$ are bases of 
$\CV_n^2(W_{\a,\b,\g})$. Upon changing variables from \eqref{Rodrigue d=2}, these polynomials 
are given explicitly by 
$$
Q_{k} ^{n} (x,y)=\left[ W_{\a,\b,\g}(x,y)\right]^{-1} 
     \frac{\partial^{n}} {\partial x^{n-k} \partial y^{k}} \left[x^{\a + n-k} y^{\b + k} (1-x-y)^{\g+n} \right]
$$ 
and 
\begin{align*}
R_{k} ^{n} (x,y) = & \, (-1)^{k} \left[ W_{\a,\b,\g}(x,y)\right]^{-1}  \frac{\partial^{n}} {\partial x^{k}(\partial y-\partial x)^{n-k}} \left[ y^{\b + n-k} (1-x-y)^{\g+k} x^{\a + n} \right].
\end{align*}  


For the study of the Sobolev orthogonal polynomials, it is necessary to understand the restriction 
of the basis element on the boundary of the triangle. For later use, we state the following proposition,
which is the special case of Lemma \ref{lem:H_n^d}.

\begin{prop} \label{prop:2.1}
For $n=0,1,\ldots,$
\begin{enumerate}[$1.$]
\item The restriction of  $P_{0,n}^{(\a,\b,\g)} \in \CV_n^2(W_{\a,\b,\g})$ 
   on the line $x=0$ is $J_{n}^{(\g,\b)} (y)$.
\item The restriction of $P_{0,n}^{(\b,\a,\g)} \in \CV_n^2(W_{\a,\b,\g})$ 
   on the line $y=0$ is $J_{n}^{(\g,\a)} (x)$.
\item The restriction of $P_{0,n}^{(\g,\b,\a)} \in \CV_n^2(W_{\a,\b,\g})$
   on the line $x+y=1$ is $J_{n}^{(\a,\b)} (y)$.
\end{enumerate}
\end{prop}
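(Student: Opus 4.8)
The plan is to reduce each of the three restrictions to the one‑variable Rodrigue formula defining $J_n^{(\cdot,\cdot)}$, exploiting that the first index $k=0$ forces the $n$‑fold derivative in \eqref{Rodrigue d=2} to act in a single variable. For the first assertion I would begin with \eqref{Rodrigue d=2} at $k=0$,
\begin{equation*}
P_{0,n}^{(\a,\b,\g)}(x,y) = x^{-\a}y^{-\b}(1-x-y)^{-\g}\,\frac{\partial^n}{\partial y^n}\left[x^{\a}\,y^{\b+n}(1-x-y)^{\g+n}\right],
\end{equation*}
and note that, since $\partial_y$ does not act on $x^{\a}$, this factor cancels the prefactor $x^{-\a}$, giving
\begin{equation*}
P_{0,n}^{(\a,\b,\g)}(x,y) = y^{-\b}(1-x-y)^{-\g}\,\frac{\partial^n}{\partial y^n}\left[y^{\b+n}(1-x-y)^{\g+n}\right].
\end{equation*}
In this form $x$ enters only through $1-x-y$, so evaluating at $x=0$ is unproblematic; moreover, because $x$ is a mere parameter for $\partial_y$, restriction to $x=0$ commutes with the differentiation.

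Carrying out the restriction then yields
\begin{equation*}
P_{0,n}^{(\a,\b,\g)}(0,y) = y^{-\b}(1-y)^{-\g}\,\frac{d^n}{dy^n}\left[y^{\b+n}(1-y)^{\g+n}\right],
\end{equation*}
which is exactly the one‑variable Rodrigue formula for $J_n^{(\g,\b)}(y)$, with $\g$ and $\b$ in the roles of the two Jacobi parameters. This proves the first assertion. The only point demanding care is the bookkeeping of the parameter order, namely matching the exponents in $(1-y)^{\g}y^{\b}$ with the index pattern of $J_n^{(\g,\b)}$.

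For the remaining two assertions I would invoke the permutation symmetry of $T^2$ encoded in \eqref{bases d=2}, so that no separate computation is needed. Writing $Q_0^n(x,y)=P_{0,n}^{(\b,\a,\g)}(y,x)$, its restriction to $y=0$ equals $P_{0,n}^{(\b,\a,\g)}(0,x)$, which by the first assertion applied with parameters $(\b,\a,\g)$ equals $J_n^{(\g,\a)}(x)$; this is the second assertion. Likewise $R_0^n(x,y)=P_{0,n}^{(\g,\b,\a)}(1-x-y,y)$, and restricting to the edge $x+y=1$ sets its first argument $1-x-y$ to $0$, so the first assertion with parameters $(\g,\b,\a)$ yields $J_n^{(\a,\b)}(y)$, the third assertion. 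Thus all three statements are the single computation of the first assertion applied to the three relabelings of the barycentric coordinates $(x,y,1-x-y)$; I do not expect any genuine obstacle beyond keeping the index conventions straight, the key structural fact being that the vanishing first index is precisely what makes the power of the variable being sent to zero cancel and the restriction collapse to the univariate Rodrigue formula.
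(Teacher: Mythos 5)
Your proposal is correct and takes essentially the same approach as the paper: the paper obtains Proposition~\ref{prop:2.1} as the $d=2$ special case of Lemma~\ref{lem:H_n^d}, whose proof is precisely your computation---the vanishing first index makes the Rodrigue formula \eqref{Rodrigue d=2} collapse to the univariate Rodrigue formula for $J_n^{(\cdot,\cdot)}$ upon restriction (Case 1 of that lemma), while the edges involving the coordinate $1-x-y$ are handled by the change of variables/permutation encoded in \eqref{bases d=2} (Case 2). Your reading of items 2 and 3 via $Q_0^n$ and $R_0^n$ (i.e., with permuted arguments, as required for membership in $\CV_n^2(W_{\a,\b,\g})$) matches the paper's convention, so there is no gap.
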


There is another basis $\{V_{k,n}^{(\a,\b,\g)} : 0 \le k \le n \}$ of $\CV_n^2(W_{\a,\b,\g})$ 
so that $P_{k,n}^{(\a,\b,\g)}$ and $V_{k,n}^{(\a,\b,\g)}$ are biorthogonal. This basis will 
be the special case of $d =2$ of the basis \eqref{Monomial} in the following section. 

\subsection{Orthogonal polynomials on the simplex} To simplify the notation, we set, for 
$x \in T^d$, $|x| : = x_1+\cdots + x_d$ and $x_{d+1} := 1- |x|$. Furthermore, for  $\g_i  > -1$, 
$1 \le i \le d+1$, we set 
$$
  \bg := (\g_1,\ldots, \g_d, \g_{d+1}) = (\g, \g_{d+1}) \quad\textit{with}\quad  \g := (\g_1,\ldots, \g_d). 
$$
Then the weight function $W_\bg$ in \eqref{Weight} can be written as 
\begin{equation} \label{eq:weight}
   W_\bg(x) = x^\g (1-|x|)^{\g_{d+1}}.
\end{equation}
If all $\g_i > -1$, then $W_\bg$ is integrable and we can consider orthogonal polynomials
in the space $L^2(W_\bg, T^d)$ with respect to the inner product 
\begin{equation} \label{eq:ipd}
        \la f, g \ra_\bg := c_\bg \int_{T^d} f(x) g(x) W_\bg (x) dx, 
\end{equation}
where $c_\bg$ is the normalization constant of $W_\bg$ given by
$$
   c_\bg := 1 \Big / \int_{T^d} W_\bg(x) dx = \frac{\Gamma(|\bg|+ d+1)}{\prod_{i=1}^{d+1} \Gamma(\g_i+1)}. 
$$
Let $\CV_n^d(W_\bg)$ denote the space of orthogonal polynomials of degree $n$ with respect 
to this inner product. As in the case of two variables, there are several distinguished bases of 
$\CV_n^d(W_\bg)$. First we state the basis defined via the Rodrigue formula. 

\begin{lem}
For $\nb \in \NN_0^d$, $\bg \in \RR^{d+1}$ and $x \in \RR^d$, define
\begin{equation} \label{Rodrigue}
 P^{\bg}_\nb (x) := x^{-\g} (1-|x|)^{-\g_{d+1}} \frac{\partial^{|\nb|}} {\partial x^{\nb}} 
     \left[x^{\g + \nb} (1-|x|)^{\g_{d+1} + |\nb|} \right],
\end{equation}
where $\frac{\partial^{|\nb|}} {\partial x^{\nb}}=\frac{\partial^{|\nb|}} {\partial x_1^{n_1} \cdots 
\partial x_d^{n_d}}$.
If $\g_i > -1$, $1 \le i \le d+1$, then $P_\nb^{\bg}$ are orthogonal polynomials with respect to $W_\bg$ and 
$\{P_{\nb}^{\bg}: |\nb| = n\}$ is a basis of $\CV_n^d(W_\bg)$. 
\end{lem}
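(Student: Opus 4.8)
The plan is to prove the three assertions implicit in the statement in turn: that each $P_\nb^\bg$ is a genuine polynomial, that its degree equals $|\nb|$ and it is orthogonal to all polynomials of lower degree, and that the family $\{P_\nb^\bg:|\nb|=n\}$ is linearly independent, hence a basis of $\CV_n^d(W_\bg)$. For the first point I would expand the Rodrigue formula by the Leibniz rule. Since $x_i$ occurs only in the two factors $x_i^{\g_i+n_i}$ and $(1-|x|)^{\g_{d+1}+|\nb|}$, applying $\frac{\partial^{|\nb|}}{\partial x^\nb}$ and then multiplying by $x^{-\g}(1-|x|)^{-\g_{d+1}}$ yields a finite sum of terms of the shape $c_\kb\,\prod_{i=1}^d x_i^{n_i-k_i}\,(1-|x|)^{k_1+\cdots+k_d}$ with $0\le k_i\le n_i$, the constants $c_\kb$ arising from binomial coefficients and falling factorials. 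All exponents are nonnegative integers, so $P_\nb^\bg$ is a polynomial, and the top-degree part of each term has total degree $\sum_i(n_i-k_i)+\sum_i k_i=|\nb|$; thus $\deg P_\nb^\bg\le|\nb|$, with the exact equality to follow from the nonvanishing pairing computed below.

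For orthogonality the key algebraic observation is that multiplying the Rodrigue formula by the weight collapses the prefactor,
$$ P_\nb^\bg(x)\,W_\bg(x)=\frac{\partial^{|\nb|}}{\partial x^\nb}\left[x^{\g+\nb}(1-|x|)^{\g_{d+1}+|\nb|}\right], $$
so that for any $q\in\Pi^d$,
$$ \la P_\nb^\bg,q\ra_\bg=c_\bg\int_{T^d} q(x)\,\frac{\partial^{|\nb|}}{\partial x^\nb}\left[x^{\g+\nb}(1-|x|)^{\g_{d+1}+|\nb|}\right]dx. $$
I would then integrate by parts $|\nb|$ times, transferring the derivatives from the bracket onto $q$, to obtain $\la P_\nb^\bg,q\ra_\bg=(-1)^{|\nb|}c_\bg\int_{T^d}\frac{\partial^{|\nb|}q}{\partial x^\nb}\,x^{\g+\nb}(1-|x|)^{\g_{d+1}+|\nb|}\,dx$. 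Taking $q\in\Pi_{n-1}^d$ with $n=|\nb|$ forces $\frac{\partial^{|\nb|}q}{\partial x^\nb}=0$, since $\deg q<|\nb|$, and hence $P_\nb^\bg\perp\Pi_{n-1}^d$, placing $P_\nb^\bg$ in $\CV_n^d(W_\bg)$.

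For the basis assertion I would apply the same identity with $q=x^{\mb}$, $|\mb|=n$. Here $\frac{\partial^{|\nb|}}{\partial x^\nb}x^{\mb}=0$ unless $\mb\ge\nb$ componentwise, which together with $|\mb|=|\nb|$ forces $\mb=\nb$, in which case $\frac{\partial^{|\nb|}}{\partial x^\nb}x^\nb=\nb!=n_1!\cdots n_d!$. Therefore $\la P_\nb^\bg,x^{\mb}\ra_\bg=(-1)^n c_\bg\,\nb!\,\delta_{\mb,\nb}\int_{T^d}x^{\g+\nb}(1-|x|)^{\g_{d+1}+|\nb|}dx$, and the remaining integral is strictly positive because its integrand is positive on the interior of $T^d$ and integrable (all exponents exceed $-1$). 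Consequently the pairing matrix $\big[\la P_\nb^\bg,x^{\mb}\ra_\bg\big]$ over $|\nb|=|\mb|=n$ is diagonal with nonzero entries; this makes $\{P_\nb^\bg:|\nb|=n\}$ linearly independent, and in particular each $P_\nb^\bg\neq0$ so has exact degree $n$. Since the number of these polynomials is $\binom{n+d-1}{d-1}=\dim\CV_n^d(W_\bg)$, they span $\CV_n^d(W_\bg)$ and hence form a basis.

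The main obstacle is justifying the integration by parts, namely that every boundary contribution over $\partial T^d$ vanishes, and this is exactly where the hypothesis $\g_i>-1$ for all $i=1,\ldots,d+1$ enters. When a derivative in the $x_i$ direction is transferred, the boundary term over the face $x_i=0$ carries a factor $x_i^{\g_i+n_i-a}$ with $a\le n_i-1$, of exponent $\ge\g_i+1>0$, while over the face $|x|=1$ the surviving factor is $(1-|x|)^{\g_{d+1}+|\nb|-b}$ with $b\le|\nb|-1$, of exponent $\ge\g_{d+1}+1>0$; on the faces $x_j=0$ with $j\neq i$ the outward normal has vanishing $i$-th component. In each case a factor with strictly positive exponent vanishes on the relevant face, so no boundary term survives. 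The delicate part will be bookkeeping these exponents through the iterated, multivariable integration by parts and treating the $d$ coordinate faces and the face $|x|=1$ uniformly, while also checking that every intermediate integrand remains integrable on $T^d$.
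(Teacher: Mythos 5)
Your proof is correct, but note that the paper itself offers no argument for this lemma: it is stated as a classical result with a pointer to \cite[p.~49]{DX}, so there is no in-paper proof to compare against. Your argument is, in substance, the standard one that this citation refers to: the Leibniz expansion showing each term of $P_\nb^\bg$ has the form $c_\kb\prod_i x_i^{n_i-k_i}(1-|x|)^{|\kb|}$, hence that $P_\nb^\bg$ is a polynomial of degree at most $|\nb|$; the identity $P_\nb^\bg W_\bg = \frac{\partial^{|\nb|}}{\partial x^\nb}\bigl[x^{\g+\nb}(1-|x|)^{\g_{d+1}+|\nb|}\bigr]$ followed by $|\nb|$ integrations by parts, with the hypothesis $\g_i>-1$ entering exactly where you say it does (the transferred boundary terms on $x_i=0$ carry exponent at least $\g_i+1>0$, those on $|x|=1$ carry exponent at least $\g_{d+1}+1>0$, and every intermediate integrand keeps all exponents above $-1$, hence integrable); and the diagonal, nonvanishing pairing against the monomials $x^\mb$, $|\mb|=n$, which yields linear independence and, with the count $\binom{n+d-1}{n}=\dim\CV_n^d(W_\bg)$, the basis property. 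The one step you compress is ``each $P_\nb^\bg\neq 0$ so has exact degree $n$'': nonzero plus degree $\le n$ does not by itself give exact degree $n$; you need the additional one-line remark that a nonzero polynomial of degree $<n$ orthogonal to $\Pi_{n-1}^d$ would be orthogonal to itself, contradicting the positive-definiteness of $\la\cdot,\cdot\ra_\bg$. With that remark inserted, your proof is complete and makes the lemma self-contained where the paper relies on an external reference.
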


This is a classical result, see, for example, \cite[p. 49]{DX}. As a basis of $\CV_n^d(W_\bg)$, 
these polynomials
are eigenfunctions of the differential operator $L_\bg$ in \eqref{diff-eqn}; that is,
\begin{equation}\label{diff-eqn2}
       L_\bg P_{\nb}^\bg = - n (n+|\bg|+d)P_{\nb}^\bg, \qquad |\nb| =n.
\end{equation}
In particular, the differential equation has a complete set of solutions consisting of 
$\binom{n+d-1}{n}$ linearly independent polynomials of degree $n$. 

\begin{rem} \label{remark:g<-1}
Analytic continuation 
shows that \eqref{diff-eqn2} still holds if some or all $\g_i$ are $\le -1$, in which case, however,
$\{P_\nb^\bg: |\nb| = n\}$ no longer contains a complete set of solutions. 
\end{rem}

If some components of $\bg$ are negative integers, the polynomial $P^{\bg}_\nb$ still satisfies 
some orthogonality with respect to a different weight function. More precisely, we have the 
following lemma.

\begin{lem}\label{orthogo.}
Let $k$ be an integer, $1 \le k \le d$, and $\g \in \RR^{d-k+1}$ with $\g_i>-1$ for all $i$. 
Let $\bg = (\g, - \mb)$, 
where $\mb \in \NN^k$. Then $P_\nb^{\bg}$ is orthogonal to polynomials of degree at most $|\nb| - |\mb|-1$
with respect to $W_{\g, \zero_k}$; that is, 
$$
\int_{T^d}P_\nb^{(\g,-\mb)}(x) Q(x)W_{\g,\zero_k}(x) dx=0,  \qquad    \forall Q\in\Pi_{|\nb|-|\mb|-1}^d.
$$
\end{lem}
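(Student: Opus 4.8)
The plan is to integrate by parts, using the Rodrigue-type structure of $P_\nb^{(\g,-\mb)}$ to move all derivatives off the polynomial and onto the test function $Q$. Let me set up notation: write $\bg = (\g, -\mb)$ where the last $k$ parameters are $\g_{d-k+1} = -m_1, \ldots, \g_d = -m_k$ with $m_i \in \NN$, and the first $d-k+1$ parameters (namely $\g_1,\ldots,\g_{d-k}$ together with $\g_{d+1}$) are $> -1$. By definition \eqref{Rodrigue},
\begin{equation*}
  P_\nb^{(\g,-\mb)}(x) = x^{-\g}(1-|x|)^{-\g_{d+1}} \frac{\partial^{|\nb|}}{\partial x^\nb}\left[x^{\g+\nb}(1-|x|)^{\g_{d+1}+|\nb|}\right],
\end{equation*}
where the ``singular'' factors $x_i^{-\g_i} = x_i^{m_i}$ for $d-k+1 \le i \le d$ are in fact polynomial factors, so the whole expression is a genuine polynomial.

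First I would form the integral $\int_{T^d} P_\nb^{(\g,-\mb)}(x)\, Q(x)\, W_{\g,\zero_k}(x)\, dx$ and substitute the Rodrigue formula. The key observation is that $W_{\g,\zero_k}(x) = x^{\g_{\mathrm{red}}}(1-|x|)^{\g_{d+1}}$ where the exponents on the last $k$ coordinates are $0$; multiplying $P_\nb^{(\g,-\mb)}$ by this weight, the prefactor $x^{-\g}(1-|x|)^{-\g_{d+1}}$ combines with $W_{\g,\zero_k}$ to leave precisely $x_{d-k+1}^{m_1}\cdots x_d^{m_k} = x^{\mb'}$ (the positive powers coming from $-\g_i = m_i$ on the singular coordinates), times the differentiated bracket. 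So the integral becomes
\begin{equation*}
  \int_{T^d} x^{\mb'}\, Q(x)\, \frac{\partial^{|\nb|}}{\partial x^\nb}\left[x^{\g+\nb}(1-|x|)^{\g_{d+1}+|\nb|}\right] dx,
\end{equation*}
with the bracketed function carrying exponents that are all genuinely $> -1$ on every coordinate (since $\g_i + n_i > -1$ for the regular coordinates, and $\g_i + n_i = n_i - m_i$ which I must ensure is nonnegative, or at least that the boundary terms vanish).

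Next I would integrate by parts $|\nb|$ times, once in each variable $x_i$ the appropriate number $n_i$ of times, transferring $\partial^{|\nb|}/\partial x^\nb$ from the bracket onto $x^{\mb'} Q(x)$. The boundary terms vanish at each stage because the bracketed expression $x^{\g+\nb}(1-|x|)^{\g_{d+1}+|\nb|}$, together with enough of its derivatives, vanishes on the faces $x_i = 0$ and $|x| = 1$ of $T^d$ — this is where the exponents $\g_i + n_i$ being sufficiently large (a consequence of $|\nb| - |\mb| - 1 \ge 0$ and the structure of $\mb'$) guarantees no contribution. After integration by parts the integral equals, up to sign,
\begin{equation*}
  \int_{T^d} x^{\g+\nb}(1-|x|)^{\g_{d+1}+|\nb|}\, \frac{\partial^{|\nb|}}{\partial x^\nb}\left[x^{\mb'} Q(x)\right] dx.
\end{equation*}
Now $x^{\mb'} Q(x)$ is a polynomial of degree $|\mb| + \deg Q \le |\mb| + (|\nb| - |\mb| - 1) = |\nb| - 1 < |\nb|$. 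Applying the mixed derivative $\partial^{|\nb|}/\partial x^\nb$ of total order $|\nb|$ to a polynomial of degree strictly less than $|\nb|$ yields zero, and the whole integral vanishes.

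\emph{The main obstacle} I expect is the careful bookkeeping of the boundary terms in the repeated integration by parts: I must verify that at each of the $|\nb|$ steps the partially-differentiated bracket still vanishes on the relevant faces of $T^d$, which requires tracking exactly how many derivatives have already been applied and confirming the surviving exponent stays large enough (in particular that $\g_{d+1} + |\nb| - (\text{derivatives taken})$ and each $\g_i + n_i - (\text{derivatives taken in }x_i)$ remain $\ge 0$ so the boundary factors vanish). Handling the mixed partials with respect to the interplay between the $x_i = 0$ faces and the $|x| = 1$ face — where the chain rule through $x_{d+1} = 1-|x|$ couples the variables — is the delicate point; I would organize this by first reducing to an iterated one-dimensional integration-by-parts argument coordinate by coordinate, as in the classical Rodrigue-formula orthogonality proof on the simplex cited after the first lemma.
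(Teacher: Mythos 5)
Your strategy --- substitute the Rodrigue formula \eqref{Rodrigue}, cancel the weight against the prefactor, integrate by parts $|\nb|$ times so that all derivatives land on the leftover polynomial factor times $Q$, and conclude because a mixed derivative of total order $|\nb|$ annihilates a polynomial of degree $|\mb|+\deg Q\le |\nb|-1$ --- is exactly the paper's proof. But there are two concrete defects in the execution.

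First, you have misplaced the negative parameters. In the lemma, $\bg=(\g,-\mb)$ with $\g\in\RR^{d-k+1}$ and $\mb\in\NN^k$ puts the negative integers in slots $d-k+2,\ldots,d+1$; in particular $\g_{d+1}=-m_k$, so the factor $(1-|x|)$ itself carries a negative-integer exponent, and $W_{\g,\zero_k}$ has exponent $0$ (not $\g_{d+1}$) on $(1-|x|)$. Your setup instead keeps $\g_{d+1}>-1$ and makes $x_{d-k+1},\ldots,x_d$ the singular coordinates, so what you prove is a permuted variant of the lemma (of the kind the paper's remark following the lemma alludes to), not the stated case. Concretely, after cancellation the leftover factor is not $x_{d-k+1}^{m_1}\cdots x_d^{m_k}$ but, in the paper's notation $\mb=(\mb',m_0)$, the factor $\xb_2^{\mb'}(1-|x|)^{m_0}=x_{d-k+2}^{m_1}\cdots x_d^{m_{k-1}}(1-|x|)^{m_k}$. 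It still has degree $|\mb|$, so your degree-count endgame is unaffected, but the delicate boundary face for the integration by parts becomes $|x|=1$ --- precisely the face that your placement of the parameters renders harmless.

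Second, the boundary-term verification that you defer as ``the main obstacle'' is the only substantive content of the paper's proof, and your proposal never supplies it. The paper's device, at the $j$-th integration by parts in the first singular variable $x_{d-k+2}$, is that the boundary term is a product of $\frac{\partial^{j-1}}{\partial x_{d-k+2}^{j-1}}\bigl[\xb_2^{\mb'}(1-|x|)^{m_0}Q\bigr]$ with a remaining derivative of the bracket, and one argues by cases that one of the two factors vanishes on the relevant faces: if $j\le m_1$ the first factor does, whereas if $j>m_1$ the remaining derivative $\frac{\partial^{\ell}}{\partial x_{d-k+2}^{\ell}}\{\xb_2^{\nb_2-\mb'}(1-|x|)^{|\nb|-m_0}\}$ with $\ell=n_{d-k+2}-j$ does. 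Some such case split (or an equivalent count of the total exponent surviving in the product of the two factors) cannot be skipped: for the singular coordinates the bracket may contain genuinely negative powers $x_i^{n_i-m_i}$ when $n_i<m_i$, so ``the exponents stay large enough'' is not automatic --- this is exactly the caveat you flag but never resolve.
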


\begin{proof}
Let us denote temporarily $x = (\xb_1,\xb_2)$ with $\xb_1 \in \RR^{d-k+1}$ and
$\xb_2 \in \RR^{k-1}$ and, similarly, $\nb = (\nb_1, \nb_2)$. Furthermore, let $\mb = (\mb', m_0)$ with 
$\mb' \in \NN^{k-1}$. Then, by the Rodrigue formula \eqref{Rodrigue},
\begin{align*}
& J: =  \int_{T^d}P_\nb^{(\g,-\mb)}(x) Q(x)W_{\g,\zero_k}(x)dx \\
 & \qquad \quad  =
    \int_{T^d} \xb_2^{\mb'}(1-|x|)^{m_0}\frac{\partial^{|\nb|}}{\partial x^\nb}
       \left\{\xb_1^{\nb_1+\g} \xb_2^{\nb_2 - \mb'} (1-|x|)^{|\nb|-m_0}\right\} Q(x)dx.
\end{align*}
We perform integration by parts on the last integral. Since $\g_i > -1$, the integration by parts over the 
first $d-k+1$ variables carries through without problem. For integration by parts in $x_{d-k+2}$, the 
first component of $\xb_2$, the part that is being integrated out, is zero since, if $j \le m_1$, then 
$\frac{\partial^{j-1}}{\partial x_{d-k+2}^{j-1}} [\xb_2^{\mb'}(1-|x|)^{m_0}Q(x) ]$ vanishes for $x_{d-k+2} =0$ 
and $|x| =1$, whereas if $j >m_1$, then for $\ell = n_{d-k+2} - j$, $\frac{\partial^\ell}{\partial x_{d-k+2}^{\ell}}
\left\{\xb_2^{\nb_2 - \mb'} (1-|x|)^{|\nb|-m_0}\right\}$ vanishes on the boundary. Consequently, we 
conclude that 
$$
   J =  (-1)^{|\nb|} \int_{T^d}   \xb_1^{\nb_1+\g} \xb_2^{\nb_2 - \mb'}
        (1-|x|)^{|\nb|-m_0}  \frac{\partial^{|\nb|}}{\partial x^\nb} \left[ \xb_2^{\mb'}(1-|x|)^{m_0}Q(x)\right]dx =0
$$
since the term insider the bracket is a polynomial of degree $|\mb| + \deg Q < n$. 
\end{proof}

It should be noted that, instead of $\bg = (\g, -\mb)$, we can assume that $\g_i$ is a negative integer
for $i$ in a subset of $\{1,2,\ldots, d+1\}$ in the above lemma. The result will be similar and the 
precise formulation should be clear from the above proof. 

The simplex $T^d$ is symmetric under the permutation of $(x_1,\ldots,x_d,x_{d+1})$. This symmetry
carries over to the Rodrigue basis. Let 
$$
        \ZZ_{d} := \{1,2,\ldots, d\}.
 $$ For a subset $S$ of $\ZZ_{d}$
we denote by $|S|$ the cardinality of $S$ and by $S^c$ the complement of $S$, that is,
$S^c:= \ZZ_{d} \setminus S$.

Let $\mathcal{G}_k$ be the permutation group of $k$ elements; its action on a function 
$g: \RR^k \mapsto \RR$ is denoted by $g(x \s)$, $\s  \in \mathcal{G}_k$.  Just as in the 
case of $d=2$, see \eqref{bases d=2}, we can permute variables and parameters of 
$P_\nb^\bg$ to obtain different bases of $\CV_n^d(W_\bg)$. 

\begin{defn} \label{defn: P(x_S)}
Let $S=\{i_1,\ldots,i_d\}$ be a subset of $\ZZ_{d+1}$ and $S^c = \{i_{d+1}\}$. 
Let $\bg_S:=(\g_S, \g_{i_{d+1}})$, where $\g_S:=(\g_{i_1},\ldots,\g_{i_d})$, and 
$x_S = (x_{i_1},\ldots,x_{i_d})$.  
\begin{enumerate}[1.]
\item If $d+1 \notin S$, then $S^c=\{d+1\}$, $x_S=(x_1,\ldots,x_d)\s$ and 
$\g_S=(\g_1,\dots, \g_d)\s$ for some $\s \in \mathcal{G}_d$, and $\bg_S=(\g_S, \g_{d+1})$. 
We denote 
$$
   P_\nb^{\bg_S}(x_S)=x_{S}^{-\g_S}(1-|x_S|)^{-\g_{d+1}}\frac{\partial^{|\nb|}}{\partial x_S^\nb}
      \left\{x_{S}^{\g_S+\nb}(1-|x_S|)^{\g_{d+1}+|\nb|}\right\}.
$$
\item If ${d+1}\in S$, then $S= \{d+1, i_2,\ldots, i_d\}\s$ for some $\s \in \mathcal{G}_d$ 
and $S^c = \{i_{d+1}\}$, where  $i_j \in \ZZ_d$. 
Let $x_S=(x_{d+1},x_{i_2},\dots,x_{i_{d}})\s$ and $\g_S=(\g_{d+1},\g_{i_2},\dots, \g_{i_{d}})\s$.
We define then
$$
  P_\nb^{\bg_S}(x_S)=  x_{S}^{-\g_S}x_{i_{d+1}}^{-\g_{i_{d+1}}}
    \frac{\partial^{|\nb|}}{\partial \xb_{S,\sigma}^\nb}
      \left\{x_{S}^{\g_S+\nb}x_{i_{d+1}}^{\g_{i_{d+1}}+|\nb|}\right\},
$$
where $\partial \xb_{S,\s} =(- \partial_{i_{d+1}}, \partial_{i_2, i_{d+1}}, \ldots, \partial_{i_d, i_{d+1}}) \s$
with  $\partial_i:=\partial{x_i}$ and $\partial_{i,j}:=\partial_i-\partial_j$.
\end{enumerate}
\end{defn}

In the case of $d+1 \in S$, the notation $P_\nb^{\bg_S}(x_S)$ agrees with that of
\eqref{Rodrigue}, and it is a simple permutation of $P_\nb^\bg(x)$. For $d =2$, see
\eqref{bases d=2}.

\begin{lem}\label{d>2 bases}
Let $S$ and $P_\nb^{\bg_S}(x_S)$ be defined in the Definition \ref{defn: P(x_S)}. 
Then $\{P_\nb^{\bg_S}(x_S): |\nb| = n\}$ is a basis of $\CV_n^d(W_{\bg})$.
In particular, $P_\nb^{\bg_S}(x_S)$ are eigenfunctions of the differential operator 
$L_\bg$ given by \eqref{diff-eqn}.
\end{lem}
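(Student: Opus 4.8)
The plan is to exhibit each $P_\nb^{\bg_S}(x_S)$ as the pullback of a standard Rodrigue polynomial under an affine symmetry of the simplex, and then transport the known orthogonality and basis properties through that symmetry. Write the barycentric coordinates as $(x_1,\ldots,x_d,x_{d+1})$ with $x_{d+1}=1-|x|$, so that by \eqref{eq:weight} one has $W_\bg=\prod_{i=1}^{d+1}x_i^{\g_i}$. A subset $S$ with $S^c=\{i_{d+1}\}$ records which $d$ of these coordinates serve as the free variables $x_S$ and which single coordinate, namely $x_{i_{d+1}}=1-|x_S|$, is the dependent one. First I would encode this choice as a permutation $\tau\in\mathcal{G}_{d+1}$ of the barycentric coordinates and let $\Phi=\Phi_S\colon\RR^d\to\RR^d$ be the associated affine map sending the standard parametrization to the $S$-parametrization. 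Since $\Phi$ merely permutes barycentric coordinates, it maps $T^d$ bijectively onto itself, it is affine and invertible and hence restricts to a degree-preserving linear isomorphism of $\Pi_n^d$, and its Jacobian has absolute value $1$.

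The central identity is $W_\bg(x)=W_{\bg\tau}(\Phi(x))$, where $\bg\tau$ denotes $\bg$ with its parameters permuted by $\tau$; this is immediate from $W_\bg=\prod_{i=1}^{d+1}x_i^{\g_i}$ once one tracks which coordinate carries which exponent. Combined with the unimodular Jacobian and the symmetry $c_\bg=c_{\bg\tau}$ of the normalization constant, the change of variables $y=\Phi(x)$ gives $\la f\circ\Phi,g\circ\Phi\ra_\bg=\la f,g\ra_{\bg\tau}$ for all polynomials $f,g$. Thus $f\mapsto f\circ\Phi$ is an inner-product isometry that preserves degree, so for any $Q\in\Pi_{n-1}^d$ one has $\la P\circ\Phi,Q\ra_\bg=\la P,Q\circ\Phi^{-1}\ra_{\bg\tau}$, and consequently it carries $\CV_n^d(W_{\bg\tau})$ bijectively onto $\CV_n^d(W_\bg)$. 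Since all parameters of $\bg\tau$ still exceed $-1$, the classical Rodrigue formula \eqref{Rodrigue} shows that $\{P_\nb^{\bg\tau}:|\nb|=n\}$ is a basis of $\CV_n^d(W_{\bg\tau})$; applying the isometry shows $\{P_\nb^{\bg\tau}\circ\Phi:|\nb|=n\}$ is a basis of $\CV_n^d(W_\bg)$.

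It remains to identify $P_\nb^{\bg_S}(x_S)$ with $P_\nb^{\bg\tau}\circ\Phi$, and this chain-rule computation is the one delicate point. When $d+1\notin S$ (case $1$ of Definition~\ref{defn: P(x_S)}), $\Phi$ is a pure permutation of $x_1,\ldots,x_d$, so each $\partial/\partial(x_S)_j$ pulls back to $\partial_{\s(j)}$ and the identification reduces to a relabeling of $\nb$, with no sign or directional subtlety. When $d+1\in S$ (case $2$), one free variable is $x_{d+1}=1-|x|$, so $\Phi$ involves the affine swap of $x_{i_{d+1}}$ with $1-|x_S|$; differentiating through this swap is exactly what converts the plain derivatives into the operators $-\partial_{i_{d+1}}$ and $\partial_{i_j}-\partial_{i_{d+1}}=\partial_{i_j,i_{d+1}}$ appearing in $\partial\xb_{S,\s}$, and it reproduces the signs seen in the $d=2$ formula for $R_k^n$. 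Carrying out this bookkeeping for general $d$ is the main obstacle; I would dispose of it by computing the pullbacks of $\partial/\partial u_1,\ldots,\partial/\partial u_d$ once and for all and checking that they agree with the entries of $\partial\xb_{S,\s}$. Once the identification holds, $\{P_\nb^{\bg_S}(x_S):|\nb|=n\}$ is a basis of $\CV_n^d(W_\bg)$, and the final assertion is then immediate: by \eqref{diff-eqn} every element of $\CV_n^d(W_\bg)$ is an eigenfunction of $L_\bg$ with eigenvalue $-n(n+|\bg|+d)$, so in particular each $P_\nb^{\bg_S}(x_S)$ is.
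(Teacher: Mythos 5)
Your proposal is correct and takes essentially the same approach as the paper: both exploit the permutation symmetry of the barycentric coordinates $(x_1,\ldots,x_d,1-|x|)$, verify $W_\bg(x)=W_{\bg_S}(u)$ under the change of variables, transfer orthogonality through the resulting integral identity, and identify $P_\nb^{\bg_S}(x_S)$ with the pullback of the standard Rodrigue polynomial via the chain-rule computation $\partial^{(u)}_1=-\partial^{(x)}_d$, $\partial^{(u)}_j=\partial^{(x)}_{j-1}-\partial^{(x)}_d$. Your isometry framing is merely a more formal packaging of the paper's direct change-of-variables argument, including the reduction to a single representative $S$ up to a permutation in $\mathcal{G}_d$.
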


\begin{proof}
In the first case, $d +1 \notin S$, changing variable $x \mapsto x \s$ in the integral 
of $\la P_\nb, g\ra_\bg$ shows immediately that $P_\nb^{\bg_S}(x_S) \in \CV_n^d(W_\bg)$.
 
The second case, $d+1 \in S$, requires a bit more work. Up to a permutation in 
$\mathcal{G}_d$, we can assume that $S = (d+1,1,\ldots, d-1)$. Then $S^c=\{d\}$, $x_S=
(x_{d+1},x_1,\dots,x_{d-1})$ and $\g_S = (\g_{d+1},\g_1,\ldots, \g_{d-1})$. 
Making a change of variable 
$$
  u_1 = x_{d+1} = 1- |x|,\quad u_2=x_1, \ldots, u_{d} = x_{d-1}
$$ 
so that $1 - |u| = x_d$ and, using superscript for the variable with respect to which 
the derivative is taking, 
$$
\partial^{(u)}_1=-\partial^{(x)}_d,\quad\partial^{(u)}_2=\partial^{(x)}_1-\partial^{(x)}_d,
 \ldots, \partial^{(u)}_{d}=\partial^{(x)}_{d-1}-\partial^{(x)}_d, 
$$
we obtain from the Rodrigue formula \eqref{Rodrigue} that  
$$
P_{\nb}^{\bg_S}(u)=(-1)^{n_1}x_S^{-\g_S} x_d^{-\g_d}\frac{\partial^{|\nb|}} {\partial_{d}^{n_1}\partial_{1,d}^{n_2}\cdots \partial_{d-1,d}^{n_{d}}} \left[x_S^{\g_S + \nb} x_d^{\g_{d} + |\nb|} \right]
= P_{\nb}^{\bg_S}(x_S).
$$
Furthermore, under this change of variables, $W_\bg(x)= W_{\bg_S}(u)$. Consequently, 
\begin{align*}
  \int_{T^d} P_{\nb}^{\bg_S}(x_S) g(x) W_\bg(x) dx = 
      \int_{T^d} P_{\nb}^{\bg_S}( u) g(u_2,\ldots,u_{d}, 1-|u|) W_{\bg_S}(u) d u = 0 
\end{align*}
by the orthogonality of $P_\nb^\bg$, so that $ P_{\nb}^{\bg_S}(x_S) \in \CV_n^d(W_\bg)$. 
\end{proof}

Another basis of $\CV_n^d(W_\bg)$ is $\{ V_\nb ^{\bg} (x): |\nb|=n, \nb \in \NN_0^d \}$, where 
$V_\nb^\bg$ are monomial orthogonal polynomials defined by 
\begin{equation} \label{Monomial}
V_\nb ^{\bg} (x) := \sum _{\mb \le \nb} (-1)^{n+|\mb|} {\prod_{i=1}^{d} \binom{n_i}{m_i} 
   \frac{ (\g_i+1)_{n_i} (|\bg|+d)_{n+|\mb|}}{(\g_i+1)_{m_i} (|\bg|+d)_{n+|\nb|}}} x^{\mb}.
\end{equation}
These polynomials satisfy $V_\nb^\bg(x) = x^\nb + Q_\nb(x)$, $Q_\nb \in \Pi_{n-1}^d$ and $V_\nb^\bg  \in 
\CV_n^d(W_\bg)$, so that $V_\nb^\bg$ is the orthogonal projection of $x^\nb$ in $\CV_n^d(W_\bg)$. 
Furthermore, we have the biorthogonal relation 
$$
\int_{T^d} P_\nb ^{\bg} (x) V_\mb ^{\bg} (x) W_\bg(x) dx =\frac{\prod_{i=1}^{d} (\g_i+1)_{n_i} 
     (\g_{d+1}+1)_{|\nb|}} {(|\bg|+d+1)_{2|\nb|}} {\nb}!  \d_{\nb,\mb}.
$$
We note that $V_\nb^\bg$ is well defined even if $\g_{d+1} = -1$, since $\g_{d+1}$ appears
only in $|\bg|$ in the right hand side of \eqref{Monomial}. 
For latter use, we state one more property of $V_\nb^{\bg}$. Let $e_1,\ldots, e_d$ be the
standard basis of $\RR^d$, that is, the $i$th coordinate of $e_j$ is 1 if $i =j$, 0 if $i \ne j$.  

\begin{lem}\label{lem:monomial-diff}
For $1 \le i \le d$, 
\begin{equation} \label{monomial-diff}
\frac{\partial}{\partial x_i}V_\nb^{\bg}(x)=n_iV_{\nb-e_i}^{(\g+e_i,\g_{d+1}+1)}(x).
\end{equation}
\end{lem}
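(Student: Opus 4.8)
The plan is to differentiate the explicit monomial expansion \eqref{Monomial} term by term and match the result against the expansion of $V_{\nb-e_i}^{(\g+e_i,\g_{d+1}+1)}$. Since both sides are polynomials given by closed-form sums, the identity should reduce to a coefficient comparison. First I would fix the index $i$ and compute $\frac{\partial}{\partial x_i} x^{\mb} = m_i x^{\mb - e_i}$, which kills the terms with $m_i = 0$ and lowers the $x_i$-degree by one elsewhere. Reindexing the surviving sum by $\mb' = \mb - e_i$ (so $m_i' = m_i - 1 \ge 0$) turns the left-hand side into a sum over $\mb' \le \nb - e_i$, which is exactly the range of summation appearing in $V_{\nb-e_i}^{(\g+e_i,\g_{d+1}+1)}$.

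\emph{The heart of the matter} is then to verify that, after this reindexing, every parameter-dependent factor in the coefficient of $x^{\mb'}$ agrees with the corresponding factor for the shifted polynomial. The two shifts to track are $\g \mapsto \g + e_i$ (which sends $\g_i \mapsto \g_i + 1$ and leaves $\g_j$ fixed for $j \ne i$) and $n \mapsto n-1$, $n_i \mapsto n_i - 1$, together with $\g_{d+1} \mapsto \g_{d+1}+1$, so that the quantity $|\bg| + d$ is preserved under the full substitution $(\g+e_i, \g_{d+1}+1)$ with total degree $n-1$. I would check the Pochhammer factors one at a time: the binomial $\binom{n_i}{m_i}$ times the stray factor $m_i$ from differentiation should combine with $(\g_i+1)_{n_i}/(\g_i+1)_{m_i}$ to reproduce $\binom{n_i-1}{m_i'}$ times $(\g_i+2)_{n_i-1}/(\g_i+2)_{m_i'}$ up to the overall factor $n_i$ pulled out front. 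The remaining Pochhammer symbols $(\g_j+1)_{n_j}/(\g_j+1)_{m_j}$ for $j \ne i$ are unchanged by the shift, and the global symbols $(|\bg|+d)_{n+|\mb|}$ and $(|\bg|+d)_{n+|\nb|}$ transform correctly because the arguments of these Pochhammer symbols and the sign $(-1)^{n+|\mb|}$ all behave consistently when $n$, $|\mb|$, and $|\nb|$ each drop by one.

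\emph{I expect the main obstacle} to be purely bookkeeping: confirming the $i$th-coordinate factor identity
$$
  m_i \binom{n_i}{m_i} \frac{(\g_i+1)_{n_i}}{(\g_i+1)_{m_i}}
   = n_i \binom{n_i-1}{m_i-1} \frac{(\g_i+2)_{n_i-1}}{(\g_i+2)_{m_i-1}},
$$
which follows from $m_i \binom{n_i}{m_i} = n_i \binom{n_i-1}{m_i-1}$ together with the elementary Pochhammer shift $(\g_i+1)_{k} = (\g_i+1)\,(\g_i+2)_{k-1}$ applied to both numerator and denominator so that the extra factor $(\g_i+1)$ cancels. Once this single-coordinate identity is in hand, the remaining factors match verbatim, and collecting everything yields the claimed coefficient of $x^{\mb'}$, namely $(-1)^{(n-1)+|\mb'|}$ times the product defining $V_{\nb-e_i}^{(\g+e_i,\g_{d+1}+1)}$, with the overall $n_i$ factored out. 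An alternative, more conceptual route would avoid the explicit expansion entirely: since $V_\nb^\bg$ is characterized as $x^\nb$ plus lower-order terms lying in $\CV_n^d(W_\bg)$, one could instead show that $\frac{1}{n_i}\frac{\partial}{\partial x_i} V_\nb^\bg$ equals $x^{\nb - e_i}$ plus lower-order terms and belongs to $\CV_{n-1}^d(W_{(\g+e_i,\g_{d+1}+1)})$, using the standard fact that differentiation maps orthogonal polynomials for $W_\bg$ to those for $W_{(\g+e_i,\g_{d+1}+1)}$; uniqueness of the monic orthogonal projection then forces the stated equality. I would try the direct coefficient computation first, since it is self-contained and the combinatorial identity above does all the work.
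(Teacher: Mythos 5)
Your strategy coincides with the paper's own proof: differentiate the expansion \eqref{Monomial} term by term, reindex via $\mb' = \mb - e_i$, and compare coefficients; your single-coordinate identity
$$
m_i \binom{n_i}{m_i} \frac{(\g_i+1)_{n_i}}{(\g_i+1)_{m_i}}
   = n_i \binom{n_i-1}{m_i-1} \frac{(\g_i+2)_{n_i-1}}{(\g_i+2)_{m_i-1}}
$$
is exactly the step the paper carries out for the $i$th factor.

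However, your treatment of the global Pochhammer factors contains a genuine error. You assert that ``the quantity $|\bg|+d$ is preserved under the full substitution $(\g+e_i,\g_{d+1}+1)$.'' It is not: the substitution raises $|\bg|$ by $2$ (one unit from $\g_i \mapsto \g_i+1$ and one from $\g_{d+1}\mapsto \g_{d+1}+1$), so the base becomes $|\bg|+d+2$. At the same time, the subscripts $n+|\mb|$ and $n+|\nb|$ each drop by $2$ under the reindexing (each of $n$, $|\mb|$, $|\nb|$ drops by one). If the base were literally unchanged while the subscripts dropped, the ratio $(|\bg|+d)_{n+|\mb|}\big/(|\bg|+d)_{n+|\nb|}$ would \emph{not} reproduce the ratio required in $V_{\nb-e_i}^{(\g+e_i,\g_{d+1}+1)}$, and the coefficient match would fail. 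What actually makes the factors agree is the exact compensation of the two shifts, through the same kind of cancellation you used in the coordinate identity:
$$
\frac{(a)_{j+2}}{(a)_{l+2}} = \frac{(a+2)_{j}}{(a+2)_{l}}, \qquad a = |\bg|+d,
$$
obtained by peeling off the common leading factors $a(a+1)$. This is precisely the identity the paper records,
$$
\frac{(|\bg|+d)_{n+|\mb|+1}}{(|\bg|+d)_{n+|\nb|}} = \frac{(|\bg|+d+2)_{n+|\mb|-1}}{(|\bg|+d+2)_{n+|\nb|-2}},
$$
and with this correction your computation goes through and coincides with the paper's proof. Note also that your alternative ``conceptual'' route needs care at the same spot: it is the simultaneous shift $\g_{d+1}\mapsto\g_{d+1}+1$ that makes differentiation map $\CV_n^d(W_\bg)$ into $\CV_{n-1}^d(W_{(\g+e_i,\g_{d+1}+1)})$, so the parameter bookkeeping cannot be waved away there either.
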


\begin{proof}
We can assume $i =1$ and write $V_\nb^{\bg}$ as 
\begin{align*} 
V_\nb ^{\bg} (x) := \sum_{m_1=0}^{n_1} \sum _{\mb' \le \nb'} (-1)^{n+|\mb|} {\prod_{i=1}^{d} \binom{n_i}{m_i}
      \frac{ (\g_i+1)_{n_i} (|\bg|+d)_{n+|\mb|}}{(\g_i+1)_{m_i} (|\bg|+d)_{n+|\nb|}}}  {x_1}^{m_1} \xb^{\mb'}
\end{align*}
where $\xb=(x_2,\dots,x_d)$, $\nb'=(n_2,\ldots,n_d)$ and $\mb'=(m_2,\ldots,m_d)$. Taking derivative with respect to
$x_1$ and shifting the summation index over $m_1$, we obtain 
\begin{align*} 
\frac{\partial}{\partial x_1} V_\nb ^{\bg} (x) &\,=\sum_{m_1=1}^{n_1} \sum _{\mb' \le \nb'} (-1)^{n+|\mb|} {\prod_{i=1}^{d} \binom{n_i}{m_i} \frac{ (\g_i+1)_{n_i} (|\bg|+d)_{n+|\mb|}}{(\g_i+1)_{m_i} (|\bg|+d)_{n+|\nb|}}} m_1 {x_1}^{m_1-1} \xb^{\mb'}\\
&\,= \sum_{m_1=0}^{n_1-1} \sum _{\mb' \le \nb'} (-1)^{n+|\mb|-1}n_1 \binom{n_1-1}{m_1}\frac{(\g_1+1)_{n_1}}{(\g_1+1)_{m_1+1}}{x_1}^{m_1}\\
  &\, \qquad \qquad\qquad \times
  {\prod_{i=2}^{d} \binom{n_i}{m_i} \frac{ (\g_i+1)_{n_i} (|\bg|+d)_{n+|\mb|+1}}{(\g_i+1)_{m_i} (|\bg|+d)_{n+|\nb|}}}  \xb^{\mb'}.
\end{align*}
Since $(a)_n=a(a+1)_{n-1}$, it holds that
$$
\frac{(\g_1+1)_{n_1}}{(\g_1+1)_{m_1+1}}=\frac{((\g_1+1)+1)_{n_1-1}}{((\g_1+1)+1)_{m_1}}, 
\quad \frac{(|\bg|+d)_{n+|\mb|+1}}{(|\bg|+d)_{n+|\nb|}}= \frac{(|\bg|+d+2)_{n+|\mb|-1}}{(|\bg|+d+2)_{n+|\nb|-2}},
$$
from which \eqref{monomial-diff} follows. 
\end{proof}

The relation \eqref{monomial-diff} allows us to take higher order derivatives of $V_\nb^\bg$. One 
interesting consequence of the lemma is that $V_\nb^{\bg}$ are orthogonal polynomials for a family 
of other inner products that involve differentiation. To state the result, we need further notation. For 
a subset $S = \{i_1,\ldots, i_j\}$ of $\ZZ_d$, we define
$$
    \frac{\partial^{|S|}}{\partial x^S} =  \frac{\partial^{|S|}}{\partial x_{i_1} \ldots \partial x_{i_j}}.
$$

\begin{lem}\label{lem:epd}
Let $\g_i > -1$ for $1 \le i \le d+1$. For $m \le d$, define
\begin{align}\label{eq:[f,g]-general}
  [f, g ]_{\bg}: = \la f, g\ra_\bg + \sum_{j=1}^{m} \sum_{\substack { S\subset \ZZ_d  \\ |S| =j}}
     \l _S  \int_{T^d} \frac{\partial^{j}f}{\partial x^{S}} \frac{\partial^{j}g} {\partial x^{S}} 
          \prod_{i \in S}x_i (1- |x|)^{j} W_{\bg}(x) dx, 
\end{align}
where $\l _S$ are nonnegative numbers. Then the polynomials $V_\nb^{\bg}$ in \eqref{Monomial} 
are orthogonal with respect to the inner product $[\cdot,\cdot]_{\bg}$. 
\end{lem}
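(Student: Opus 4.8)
The plan is to verify $[V_\nb^\bg, V_\mb^\bg]_\bg = 0$ for every pair $\nb \ne \mb$ by treating the bilinear form one summand at a time. Writing $e_S := \sum_{i\in S} e_i$ for a subset $S\subset\ZZ_d$, the computational engine is the iteration of \lemref{lem:monomial-diff}: applying \eqref{monomial-diff} in turn for the indices of $S$ --- at each step the $\g$-part of the parameter vector gains one $e_i$, the exponent $\g_{d+1}$ increases by one, and both the multi-index and the total degree drop --- yields
$$
  \frac{\partial^{|S|}}{\partial x^S} V_\nb^\bg = \Big(\prod_{i\in S} n_i\Big)\, V_{\nb-e_S}^{\wt\bg_S}, \qquad \wt\bg_S:=(\g+e_S,\ \g_{d+1}+|S|),
$$
the derivatives commuting so that the order is immaterial; in particular this vanishes unless $n_i\ge1$ for every $i\in S$. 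The second ingredient is that the weight attached to the $S$-summand is again a classical simplex weight: since $W_\bg(x)=x^\g(1-|x|)^{\g_{d+1}}$,
$$
  \Big(\prod_{i\in S}x_i\Big)(1-|x|)^{|S|}W_\bg(x) = x^{\g+e_S}(1-|x|)^{\g_{d+1}+|S|} = W_{\wt\bg_S}(x),
$$
and all parameters of $\wt\bg_S$ exceed $-1$ (from $\g_i>-1$ we get $\g_i+1>-1$, while $\g_{d+1}+|S|\ge\g_{d+1}+1>-1$), so $W_{\wt\bg_S}$ is integrable and the $V_{\cdot}^{\wt\bg_S}$ are genuine orthogonal polynomials for it.

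Combining the two ingredients, the $S$-summand of $[V_\nb^\bg,V_\mb^\bg]_\bg$ equals $\l_S\big(\prod_{i\in S}n_im_i\big)\int_{T^d}V_{\nb-e_S}^{\wt\bg_S}V_{\mb-e_S}^{\wt\bg_S}W_{\wt\bg_S}\,dx$, while the leading summand is $\la V_\nb^\bg,V_\mb^\bg\ra_\bg$. First I would settle the case $|\nb|\ne|\mb|$; by symmetry take $|\nb|<|\mb|$. Then $V_\mb^\bg\in\CV_{|\mb|}^d(W_\bg)$ is orthogonal to $\Pi_{|\mb|-1}^d\ni V_\nb^\bg$, so the leading summand vanishes, and for each $S$ the factor $V_{\mb-e_S}^{\wt\bg_S}$ has degree $|\mb|-|S|$ and is $W_{\wt\bg_S}$-orthogonal to all polynomials of strictly smaller degree, in particular to $V_{\nb-e_S}^{\wt\bg_S}$ of degree $|\nb|-|S|<|\mb|-|S|$. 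Hence every summand vanishes, giving $[V_\nb^\bg,V_\mb^\bg]_\bg=0$ whenever the degrees differ; equivalently, $[\cdot,\cdot]_\bg$ keeps the spaces $\CV_n^d(W_\bg)$ mutually orthogonal.

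The hard part will be the case $|\nb|=|\mb|=n$ with $\nb\ne\mb$, where no degree comparison is available. Since the $\l_S$ are arbitrary nonnegative numbers, the vanishing of $[V_\nb^\bg,V_\mb^\bg]_\bg$ cannot come from cancellation between summands: the leading pairing $\la V_\nb^\bg,V_\mb^\bg\ra_\bg$ and each reduced integral $\int_{T^d}V_{\nb-e_S}^{\wt\bg_S}V_{\mb-e_S}^{\wt\bg_S}W_{\wt\bg_S}\,dx$ must vanish on its own. Thus the equal-degree case reduces precisely to the within-degree mutual orthogonality of the monic family $\{V_\nb^\bg:|\nb|=n\}$ --- and of each shifted family $\{V_{\cdot}^{\wt\bg_S}\}$ --- under the corresponding $L^2(W)$ inner product. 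Establishing this within-degree orthogonality, for which the monic normalisation $V_\nb^\bg=x^\nb+Q_\nb$ together with the biorthogonality of $\{P_\nb^\bg\}$ and $\{V_\nb^\bg\}$ are the natural tools, is the decisive and most delicate point; I expect essentially all of the difficulty of the lemma to concentrate there, and I would test the smallest cases ($d=2$, $n=1$) first to pin down the exact form of the orthogonality that is really being claimed before attempting the general argument.
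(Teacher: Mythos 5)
Your computational core is exactly the paper's: iterating Lemma~\ref{lem:monomial-diff} over $S=\{i_1,\ldots,i_j\}$ gives $\frac{\partial^{j}}{\partial x^{S}}V_\nb^\bg=\big(\prod_{i\in S}n_i\big)V_{\nb-e_S}^{\wt\bg_S}$ with $\wt\bg_S=(\g+e_S,\g_{d+1}+j)$ and $e_S=\sum_{i\in S}e_i$, and $\prod_{i\in S}x_i\,(1-|x|)^{j}W_\bg=W_{\wt\bg_S}$ with all parameters still $>-1$. The genuine problem is where you locate the remaining difficulty. The phrase ``$V_\nb^\bg$ are orthogonal with respect to $[\cdot,\cdot]_\bg$'' is meant here in the sense used throughout this paper for orthogonal polynomials in several variables: each $V_\nb^\bg$ satisfies $[V_\nb^\bg,q]_\bg=0$ for \emph{every} $q\in\Pi_{|\nb|-1}^d$, i.e.\ it is orthogonal to all polynomials of strictly lower degree (compare the definition of $\CV_n^d(W_\bg)$, and the remark right after the lemma that $P_\nb^\bg$ ``is not an orthogonal polynomial with respect to $[\cdot,\cdot]_\bg$''). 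It does \emph{not} assert pairwise orthogonality of $V_\nb^\bg$ and $V_\mb^\bg$ within the same degree; that assertion is false, already when all $\l_S=0$ so that $[\cdot,\cdot]_\bg=\la\cdot,\cdot\ra_\bg$. Indeed, for $d=2$ and $\bg=(0,0,0)$ one has $V_{(1,0)}^\bg=x_1-\tfrac13$ and $V_{(0,1)}^\bg=x_2-\tfrac13$, and $\la V_{(1,0)}^\bg,V_{(0,1)}^\bg\ra_\bg=\tfrac1{12}-\tfrac19=-\tfrac1{36}\ne 0$. So the ``decisive and most delicate point'' to which you defer essentially all of the work --- within-degree mutual orthogonality of the monic family --- is a dead end: no argument can establish it, and your own (correct) observation that nonnegative, arbitrary $\l_S$ forbid cancellation between summands shows that under your reading the lemma itself would be false. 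That is the signal that the reading, not the lemma, is wrong.

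Once the statement is read correctly, what you have already written is a complete proof, and it coincides with the paper's. For arbitrary $q\in\Pi_{n-1}^d$, the $S$-summand equals $\l_S\big(\prod_{i\in S}n_i\big)\int_{T^d}V_{\nb-e_S}^{\wt\bg_S}\,\frac{\partial^{j}q}{\partial x^{S}}\,W_{\wt\bg_S}\,dx$, and since $\deg\frac{\partial^{j}q}{\partial x^{S}}\le n-1-j<|\nb|-j=\deg V_{\nb-e_S}^{\wt\bg_S}$, the integral vanishes by the classical orthogonality of $V_{\nb-e_S}^{\wt\bg_S}$ with respect to $W_{\wt\bg_S}$; the term $\la V_\nb^\bg,q\ra_\bg$ vanishes for the same reason. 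Alternatively, your unequal-degree computation already suffices by linearity, because $\{V_\mb^\bg:|\mb|\le n-1\}$ spans $\Pi_{n-1}^d$ (each $V_\mb^\bg$ is monic of degree $|\mb|$). Either route finishes the lemma, together with the one-line remark that $\l_S\ge 0$ makes $[\cdot,\cdot]_\bg$ positive definite and hence an inner product.
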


\begin{proof}
By \eqref{monomial-diff}, taking repeated derivatives of $V_\nb^\bg$, we have
$$
  \frac{\partial^{|S|}f}{\partial x^{S}} V_\nb^\bg(x) = n_{i_1} \cdots n_{i_{j}} 
        V_{\nb-e_{i_1}-\ldots-e_{i_{j}}}^{(\g+e_{i_1}+\ldots+e_{i_{j}},\g_{d+1}+j)}(x)
$$
with $S = \{i_1,\ldots,i_j\}$, which implies, by the orthogonality of 
$ V_{\nb-e_{i_1}-\ldots-e_{i_{j}}}^{(\g+e_{i_1}+\ldots+e_{i_{j}},\g_{d+1}+j)}$, that
$V_\nb^\bg$ is orthogonal with respect to 
$$
 [f,g]_S :=  \int_{T^d} \frac{\partial^{j}f}{\partial x^{S}} \frac{\partial^{j}g} {\partial x^{S}} 
          \prod_{i \in S}x_i (1- |x|)^{j} W_{\bg}(x) dx 
$$
and, consequently,  orthogonal with respect to $[\cdot, \cdot]_{\bg}$. Since all 
$\l_S \ge 0$, $[\cdot, \cdot]_{\bg}$ is an inner product. 
\end{proof}

\begin{rem} \label{rem:2.1}
Using the relation \eqref{monomial-diff} repeatedly, we can also include partial 
derivatives of higher orders, such as $\frac{\partial^{|\mb|}}{\partial x_1^{m_1}\cdots \partial x_k^{m_k}}$
for $\mb = (m_1,\ldots, m_k)$ with $k \le d$, in the definition of $[f,g]_\bg$. For our 
purpose, the definition in \eqref{eq:[f,g]-general} is sufficient. 
\end{rem}

A couple of remarks are in order. We note that $[\cdot,\cdot]_S$ is not an inner product by 
itself since, for example, $[f,f]_S =0$ for $f (x) =1$. Even though $\{V_\nb^\bg: |\nb| =n\}$ is 
a basis of $\CV_n^d(W_\bg)$, not every basis of $\CV_n^d(W_\bg)$ is orthogonal with 
respect to $[\cdot,\cdot]_\bg$. For example, when $\l_S$ are not all zero, $P_\nb^\bg$ 
is not an orthogonal polynomial with respect to $[\cdot,\cdot]_\bg$. 

\subsection{Orthogonal polynomials on the faces of the simplex}
In addition to the standard basis $e_1,\ldots, e_d$ of $\RR^d$, we further set $e_0 = (0,\ldots, 0)$. 
The points $e_0, e_1,\ldots, e_d$ are the vertices of the simplex $T^d$. The boundary of the 
simplex $T^d$ can be decomposed into lower dimensional faces.  

\begin{defn}
Let $S$ be a subset of $\ZZ_{d+1}$ with $1\le |S| \le d$. Then 
$$
              T_S^{d-|S|} : = \{x \in T^d: x_i =0, \,  i \in S\}
$$
is a $(d-|S|)$-dimensional face of $T^d$. 
\end{defn}

For $1\le |S| \le d$, the number of  $(d-|S|)$-dimensional faces is $\binom{d+1}{|S|}$. In particular, 
for $|S|=d $, the 0-dimensional faces are precisely the vertices $e_0, e_1, \ldots, e_{d}$ of 
$T^d$, whereas for $|S|  =1$, the $(d-1)$-dimensional faces are $T^{d-1}_{\{j\}}$, $j =1,\ldots, d, d+1$. 
Each face $T_S^{d-|S|}$ is a $({d-|S|})$-dimensional simplex and, moreover, $T_S^{d-|S|}$ is isomorphic
to the simplex $T^{d-|S|}$ in variables $\{x_i: i \in S^c\}$. Furthermore, the restriction of $W_\bg$ on $T_S^{d-|S|}$ is a weight function of the same type in variables $\{x_i: i \in S^c\}$.  

\begin{defn}\label{defn:H_n^d}
Let $S=\{i_1,\ldots,i_d\}$ be a subset of $\ZZ_{d+1}$  and $S^c = \{i_{d+1}\}$. Let 
$\bg_S:=(\g_S, \g_{i_{d+1}})$, where $\g_S:=(\g_{i_1},\ldots,\g_{i_d})$, and 
$x_S = (x_{i_1},\ldots,x_{i_d})$. Let $ P_\nb^{\bg_S}(x_S)$ be defined as Definition \ref{defn:  P(x_S)}. 
\begin{enumerate}[1.]
\item Let $d+1 \notin S$. For $S_j \subset \ZZ_d$ with $|S_j|= j$, $1 \le j \le d-1$, define
$$
   H_{n,S_j}^d(W_\bg): = \mathrm{span} \{P_{\nb}^{\bg}(x): |\nb| = n, \, \hbox{and} \, \,
           n_{\ell}= 0, \, \forall \ell \in S_j\}.
$$
\item Let $d+1 \in S$. Then $S=\{d+1,\ell_1,\ldots,\ell_{d-1}\}$ with $\{\ell_1,\ldots,\ell_{d-1}\}\subset\ZZ_d$ and $\ZZ_{d+1}\setminus S=\{\ell_d\}$.  For $S_j=\{d+1,\ell_1,\ldots,\ell_{j-1}\}$ with 
$1 \le j\le d-1$, where $S_1=\{d+1\}$, define
$$
H_{n,S_j}^d(W_\bg): = \mathrm{span} \{P_{\nb}^{(\g_S,\g_{\ell_d})}(x_{S_j},x_{S\setminus S_j}): 
   |\nb| = n \,\, \hbox{with}  \,\, n_\ell=0, \, 1 \le \ell \le j\}.
$$
\end{enumerate}
We further define $H_{n,S_d}^d(W_\bg) = \emptyset$ and $H_{n,S_{d+1}}^d(W_\bg) = \emptyset$.
\end{defn}

\begin{lem}\label{lem:H_n^d}
The space $H_{n,S_j}^d(W_\g)$ satisfies the following properties:\\ 
\noindent $(i)$ $ H_{n,S_j}^d(W_\bg)$ is a subspace of $\CV_n^d(W_\bg)$;\\
\noindent $(ii)$ The restriction of $H_{n,S_j}^d (W_\bg)$ on the face $T_{S_{j}}^{d-j}$ is the subspace 
$\CV_n^{d-j} (W_{\bg }\vert_{T_{S_{j}}^{d-j}})$, where $W_{\bg}\vert_{T_{S_{j}}^{d-j}}$ denotes the restriction of $W_\bg$ on
the face $T_{S_{j}}^{d-j}$, that is, 
$$
H_{n,S_j}^d (W_\bg) \big \vert_{T_{S_{j}}^{d-j}} = \CV_n^{d-j} \Big(W_\bg \big \vert_{T_{S_{j}}^{d-j}} \Big)
$$
in the variables $\{x_i: i \in S_{j}^c\}$. 
\end{lem}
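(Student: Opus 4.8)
\emph{Proof proposal.} The plan is to handle the two cases of Definition~\ref{defn:H_n^d} in turn: part $(ii)$ in the case $d+1\notin S$ will follow from an explicit manipulation of the Rodrigue formula \eqref{Rodrigue}, while the case $d+1\in S$ will be reduced to it by the change of variables already employed in the proof of Lemma~\ref{d>2 bases}. Part $(i)$ is immediate in either case, since $H_{n,S_j}^d(W_\bg)$ is by definition the linear span of a subset of the Rodrigue basis $\{P_\nb^{\bg_S}(x_S):|\nb|=n\}$, which is a basis of $\CV_n^d(W_\bg)$ by Lemma~\ref{d>2 bases}; hence $H_{n,S_j}^d(W_\bg)\subset\CV_n^d(W_\bg)$.

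For part $(ii)$ with $d+1\notin S$, so that $S=\ZZ_d$ and $S_j\subset\ZZ_d$, I would fix a multi-index $\nb$ with $|\nb|=n$ and $n_\ell=0$ for every $\ell\in S_j$, and write $P_\nb^\bg$ out via \eqref{Rodrigue}. The crucial observation is that $\partial^{|\nb|}/\partial x^\nb$ does not differentiate in any variable $x_\ell$ with $\ell\in S_j$, because $n_\ell=0$; therefore the factor $x_\ell^{\g_\ell+n_\ell}=x_\ell^{\g_\ell}$ inside the bracket is inert and may be pulled out in front of the derivative, where it cancels against the prefactor $x_\ell^{-\g_\ell}$. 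Writing $S_j^c:=\ZZ_d\setminus S_j$, what remains is an expression in the variables $\{x_i:i\in S_j^c\}$ together with the factor $1-|x|$. Restricting to the face $T_{S_j}^{d-j}$, that is, setting $x_\ell=0$ for $\ell\in S_j$, replaces $1-|x|$ by $1-|x_{S_j^c}|$ and leaves $|\nb_{S_j^c}|=|\nb|=n$, so the outcome is precisely the Rodrigue formula in the $d-j$ variables $x_{S_j^c}$ for the weight $W_\bg\vert_{T_{S_j}^{d-j}}$. Thus each restricted basis element lies in $\CV_n^{d-j}(W_\bg\vert_{T_{S_j}^{d-j}})$; and since $\nb\mapsto\nb_{S_j^c}$ is a bijection from the admissible multi-indices onto $\{\nb'\in\NN_0^{d-j}:|\nb'|=n\}$, the $(d-j)$-dimensional Rodrigue basis result shows these restrictions form a basis of $\CV_n^{d-j}(W_\bg\vert_{T_{S_j}^{d-j}})$, which gives the asserted equality of spaces.

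For the case $d+1\in S$ I would reduce to the one just treated. After a permutation in $\mathcal{G}_d$ I may take $S=(d+1,\ell_1,\dots,\ell_{d-1})$ with $\ZZ_{d+1}\setminus S=\{\ell_d\}$ and $S_j=\{d+1,\ell_1,\dots,\ell_{j-1}\}$; introducing coordinates $u$ with $u_1=x_{d+1}=1-|x|$ and $u_{k+1}=x_{\ell_k}$, so that $1-|u|=x_{\ell_d}$, one has $W_\bg(x)=W_{\bg_S}(u)$ and $P_\nb^{\bg_S}(x_S)=P_\nb^{\bg_S}(u)$, the latter a genuine Rodrigue polynomial in $u$, exactly as in the proof of Lemma~\ref{d>2 bases}. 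Under this substitution the face $\{x_i=0:i\in S_j\}$ becomes the coordinate face $\{u_1=\dots=u_j=0\}$ and $H_{n,S_j}^d(W_\bg)$ becomes the corresponding $H$-space for $W_{\bg_S}$ in the $u$ variables, which is now an instance of the case $d+1\notin S$; the conclusion then follows from the first case. The main difficulty I anticipate is not conceptual but one of bookkeeping: keeping the permutation, the set $S_j$, and the relabeling of coordinates consistent, and verifying that the substitution carries both the face $T_{S_j}^{d-j}$ and the restricted weight exactly onto their $u$-counterparts, so that the reduction to the first case is precise.
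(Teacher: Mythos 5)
Your proposal is correct and takes essentially the same route as the paper's proof: part $(i)$ via Lemma~\ref{d>2 bases}, the case $d+1\notin S$ by noting the factors $x_\ell^{\g_\ell}$ for $\ell\in S_j$ are inert in the Rodrigue formula and cancel, so that restriction to $T_{S_j}^{d-j}$ yields exactly the lower-dimensional Rodrigue formula, and the case $d+1\in S$ by the change of variables $u_1=1-|x|$ reducing to the first case, just as in the proof of Lemma~\ref{d>2 bases}. Your explicit bijection argument for why the restricted polynomials form a full basis is a small elaboration of what the paper merely asserts, not a different approach.
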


\begin{proof}

The item (i) follows immediately from Lemma \ref{d>2 bases}. For the proof of item (ii), we need
to consider two cases, $d+1\notin S$ and $d+1 \in S$. 

\medskip
\noindent 
{\it Case 1.}  $d+1 \notin S$. Then $S= \ZZ_d$. Let $S_j=\{i_1,\ldots,i_j\}\subset\ZZ_d$, $S_j^c=\{i_{j+1},\dots,i_d\}$, $\bg_{S_j^c}=(\g_{i_{j+1}},\ldots,\g_{i_d})$ and  $\nb_ {S_j^c}=(n_{i_{j+1}},\dots, n_{i_d})$. 
 If $n_\ell=0$ for $\ell \in S_j$, then $P_\nb^{\bg}$ in \eqref{Rodrigue} can be written as 
$$
P_{\nb}^{\bg}(x)=x_{S_{j}^c}^{-\bg_{S_{j}^c}}(1-|x|)^{-\g_{d+1}}\frac{\partial^{|\nb_{S_j^c}|}} 
  {\partial x_{S_{j}^c}^{\nb_{S_{j}^c}}} 
     \left[ x_{S_{j}^c}^{\bg_{S_{j}^c} + \nb_{S_j^c}} (1-|x|)^{\g_{d+1} + |\nb_{S_j^c}|} \right],
$$
which, when restricted to the face $T_{S_{j}}^{d-j}$, becomes 
$$
P_{\nb}^{\bg}(x)\big \vert_{T_{S_{j}}^{d- j}} =x_{S_{j}^c}^{-\bg_{S_{j}^c}}(1-|x_{S_{j}^c}|)^{-\g_{d+1}}\frac{\partial^{|\nb_{S_j^c}|}} 
  {\partial x_{S_{j}^c}^{\nb_{S_j^c}}} 
     \left[ x_{S_{j}^c}^{\bg_{S_{j}^c} + \nb_{S_j^c}} (1-|x_{S_{j}^c}|)^{\g_{d+1} + |\nb_{S_j^c}|} \right],
$$
precisely the Rodrigue formula in the variable $x_{S_{j}^c}$ for the weight function 
$W_\bg \big \vert_{T_{S_{j}}^{d-j}}$ and, as such, is an element of 
$\CV_n^{d-j}\big(W_\bg \big \vert_{T_{S_{j}}^{d-j}}\big)$. Furthermore, the collection 
of such polynomials forms a basis of the latter space. This completes the proof 
of (ii) in this case. 

\medskip\noindent 
{\it Case 2.} $d+1 \in S$. Up to a permutation in $\mathcal{G}_d$, we can assume 
$S = (d+1,1,\ldots, d-1)$.
Then $S^c=\{d\}$, $\bg_S =(\g_S,\g_d)$ with $\g_S = (\g_{d+1},\g_1,\ldots, \g_{d-1})$ and $S_j=\{d+1,1,\ldots, j-1\}$. 
Making a change of variable 
$$
  u_1 = x_{d+1}=1- |x|,\quad u_2=x_1, \ldots, u_{d} = x_{d-1},
$$ 
we have $1 - |u| = x_d$, $W_\bg(x)=W_{\bg_S}(u)$ and $P_{\nb}^{\bg_S}(x_S)
=P_{\nb}^{\bg_S}(u)$, as in the proof of Lemma \ref{d>2 bases}, so that
$$
   H_{n,S_j}^d(W_\bg(x))=H_{n,S^*_{j}}^d(W_{\bg_S} (u))
$$
where $S_j^*:=\{1,2,\ldots,j\}$ and 
$$ H_{n,S_j^*}^d(W_{\bg_S}(u))
=\mathrm{span} \{P_{\nb}^{\bg_S}(u): |\nb| = n, \,\, n_\ell=0,\,\, \forall \ell \in S_j^*\}.
$$
From the  case that $d+1 \notin S$, it follows that
$$
  H_{n,S_j^*}^d (W_{\bg_S}) \big \vert_{T_{S_{j}^*}^{d-j}} = 
       \CV_n^{d-j} \Big(W_{\bg_S} \big \vert_{T_{S_{j}^*}^{d-j}} \Big),
$$
which is equivalent to 
$$
H_{n,S_j}^d (W_\bg) \big \vert_{T_{S_j}^{d-j}} = \CV_n^{d-j} \Big(W_\bg \big \vert_{T_{S_j}^{d-j}} \Big).
$$
This completes the proof. 
\end{proof}

\begin{lem}\label{rem} 
Let $H_{n,S_j}^d (W_\bg)$ be defined as in Definition \ref{defn:H_n^d}. Then $H_{n,S_j}^d (W_\bg)$
is a subspace of $\CV_n^d(W_{\bg^*})$, where $\bg^* =  (\g_1^*,\ldots,\g_{d+1}^*)$ is defined as follows: 
$\g_\ell^* \in \RR$ are arbitrary for $\ell \in S_j$ and $\g_\ell^* = \g_j$ otherwise. In particular,
every element of $H_{n,S_j}^d (W_\bg)$ satisfies the equation 
$L_{\bg^*} P = - n (n +|\bg^*| +d)P$, where $\g_\ell^*$ are arbitrary for $\ell \in S_j$. 
\end{lem}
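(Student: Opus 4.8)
The plan is to build on the observation, already appearing in the proof of Lemma~\ref{lem:H_n^d}, that a Rodrigue polynomial $P_\nb^{\bg}$ subject to $n_\ell = 0$ for all $\ell \in S_j$ carries no dependence on the parameters $\g_\ell$ with $\ell \in S_j$. First I would make this independence explicit. Take the case $d+1\notin S$, so that $S=\ZZ_d$ and $S_j\subset\ZZ_d$. When $n_\ell=0$ no derivative is applied in the variable $x_\ell$ in the Rodrigue formula~\eqref{Rodrigue}, so the factor $x_\ell^{\g_\ell}$ inside the bracket passes through $\partial^{|\nb|}/\partial x^\nb$ and cancels the prefactor $x_\ell^{-\g_\ell}$. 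This is exactly the reduction carried out in Case~1 of Lemma~\ref{lem:H_n^d}, which gives
$$
P_{\nb}^{\bg}(x)=x_{S_{j}^c}^{-\bg_{S_{j}^c}}(1-|x|)^{-\g_{d+1}}\frac{\partial^{|\nb_{S_j^c}|}}{\partial x_{S_{j}^c}^{\nb_{S_{j}^c}}}\left[x_{S_{j}^c}^{\bg_{S_{j}^c}+\nb_{S_j^c}}(1-|x|)^{\g_{d+1}+|\nb_{S_j^c}|}\right],
$$
an expression in which the parameters $\g_\ell$, $\ell\in S_j$, simply do not occur. Hence, if $\bg^*$ agrees with $\bg$ off $S_j$ and is otherwise unconstrained, then $P_\nb^{\bg}=P_\nb^{\bg^*}$ for every index $\nb$ spanning $H_{n,S_j}^d(W_\bg)$.

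From this single identity both assertions follow. For the membership in $\CV_n^d(W_{\bg^*})$ I would fix any admissible choice of the free parameters, i.e.\ $\g_\ell^*>-1$ for $\ell\in S_j$, so that $W_{\bg^*}$ is integrable; the classical fact that $\{P_\mb^{\bg^*}:|\mb|=n\}$ is a basis of $\CV_n^d(W_{\bg^*})$ then shows that the spanning elements of $H_{n,S_j}^d(W_\bg)$, being equal to those $P_\nb^{\bg^*}$ with $n_\ell=0$ for $\ell\in S_j$, lie in $\CV_n^d(W_{\bg^*})$. Because the polynomials themselves are insensitive to the value of $\g_\ell^*$, this inclusion holds simultaneously for every admissible $\bg^*$. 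For the eigenfunction statement I would appeal to Remark~\ref{remark:g<-1}: by analytic continuation the identity~\eqref{diff-eqn2} persists for all real parameters, so $L_{\bg^*}P_\nb^{\bg^*}=-n(n+|\bg^*|+d)P_\nb^{\bg^*}$ holds for arbitrary $\g_\ell^*\in\RR$, including values $\le -1$; substituting $P_\nb^{\bg^*}=P_\nb^{\bg}$ yields the stated equation for every element of $H_{n,S_j}^d(W_\bg)$.

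It remains to treat $d+1\in S$, and I expect this reduction to be the only step needing genuine care. Exactly as in Case~2 of Lemma~\ref{lem:H_n^d}, after a permutation placing $S=(d+1,1,\ldots,d-1)$, the change of variables $u_1=1-|x|$, $u_2=x_1,\ldots,u_d=x_{d-1}$ sends $W_\bg$ to $W_{\bg_S}$ and identifies $H_{n,S_j}^d(W_\bg)$ with the corresponding space for the parameter vector $\bg_S$, in which the distinguished index $d+1$ now sits among the leading coordinates; the previous case then applies verbatim. The points demanding attention are keeping track of which parameters are declared arbitrary once the permutation and the directional derivatives of Definition~\ref{defn: P(x_S)} are unwound, and observing that the cancellation of the $x_\ell^{\g_\ell}$ factors is purely formal and therefore unaffected by the integrability of $W_{\bg^*}$; beyond this the argument is the same cancellation as above.
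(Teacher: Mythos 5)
Your proposal is correct and follows essentially the same route as the paper: the entire argument rests on the observation that when $n_\ell=0$ the factor $x_\ell^{\g_\ell}$ passes through the differentiation in the Rodrigue formula \eqref{Rodrigue} and cancels against $x_\ell^{-\g_\ell}$, so that $P_\nb^{\bg}=P_\nb^{\bg^*}$, which is exactly the paper's (very terse) proof. Your additional care — treating $d+1\in S$ via the change of variables from Case~2 of Lemma~\ref{lem:H_n^d}, and invoking Remark~\ref{remark:g<-1} to give meaning to the eigenfunction equation when some $\g_\ell^*\le -1$ — only makes explicit what the paper leaves implicit.
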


\begin{proof}
In the Rodrigue formula $P_\nb^{\bg}$ of \eqref{Rodrigue}, if $n_\ell =0$, then the derivative 
with respect to $x_\ell$ does not appear, so that, after canceling out $x_\ell^{-\g_\ell}$ and 
$x_\ell^{\g_\ell}$, $\g_\ell$ does not appear in $P_\nb^\bg$, which means that we can assign
$\g_\ell$ any value. In other words, $P_\nb^\bg(x) = P_\nb^{\bg^*}(x)$ if $n_\ell = 0$, $\ell 
\in S_j$. Consequently, $H_{n,S_j}^d(W_\bg) \in \CV_n^d(W_\bg^*)$. 
\end{proof}

\section{Main results}
\setcounter{equation}{0}

Our main results on the simplex are fairly involved in notations. We shall state our results first 
in the case of triangle, that is, $d =2$, and use it to explain how we arrive at the results. 

\subsection{Sobolev orthogonal polynomials on the triangle}

As stated in the introduction, we are facing two problems. The first one is to find a complete 
solutions of polynomials for the differential equation \eqref{diff-eqn} in which $d =2$ and
$\bg = (\a,\b,\g)$. 

To facilitate the discussion,  we define the following subspaces of 
$\CV_n^2(W_{\a,\b,\g})$, 
\begin{align*}
 H_{n,1} (w_{\g,\b})&  : = \mathrm{span} \{P_{0,n}^{(0,\b,\g)}(x,y)\}, \quad 
 H_{n,2} (w_{\g,\a}) := \mathrm{span} \{P_{0,n}^{(0,\a,\g)}(y,x)\}, \\
  \quad  H_{n,3} (w_{\a,\b}) & : = \mathrm{span} \{P_{0,n}^{(0,\b,\a)}(1-x-y,y)\},
\end{align*}
which can be formulated as special cases of the spaces in Definition \ref{defn:H_n^d}. 
According to Proposition \ref{prop:2.1}, 
$H_{n,1}(w_{\g,\b})\vert_{x=0} \allowbreak = \CV_n^1(w_{\g,\b})$, 
$H_{n,2}(w_{\g,\a})\vert_{y=0} = \CV_n^1(w_{\g,\a})$ and 
$H_{n,3}(w_{\a,\b})\vert_{x+y=1} = \CV_n^1(w_{\a,\b})$. 
In the following we adopt the convention that 
$$
    \CV_n^2(W_{\a,\b,\g}) = \emptyset  \quad \hbox{and} \quad H_{n,j} = \emptyset, \quad \hbox{if $n < 0$}. 
$$

\begin{thm} \label{thm:main_d=2_1}
For $n \in \NN_0$ and at least one of $\a,\b,\g$ being $-1$, the differential  equation 
$$
           L_{\a, \b, \g} u =  - n(n+ \a+\b+\g+2) u 
$$
has a solution space  $\CU_n^2(W_{\a,\b,\g})$ of polynomials of degree $n$, which has
dimension $n+1$ and can be decomposed as a direct sum as follows: 
\begin{align*}
   \CU_n^2(W_{\a,\b,-1}): =  &\, (1-x-y) \CV_{n-1}^2(W_{\a,\b,1}) + H_{n,3}(w_{\a,\b}), \quad n \ge 0,\\
   \CU_n^2(W_{\a,-1,-1}):  = &\, y (1-x-y) \CV_{n-2}^2(W_{\a,1,1}) +y H_{n-1,3}(w_{\a,1}) \\
                  & \, + (1-x-y) H_{n-1,2}(w_{1,\a}) 
\end{align*}                  
for $n \ge 1$ and $\CU_0^2(W_{\a,-1,-1}) := \mathrm{span}\{1\}$, and
\begin{align*}                  
   \CU_n^2(W_{-1,-1,-1}): = & \, x y (1-x-y) \CV_{n-3}^2(W_{1,1,1}) + x y  H_{n-2,3}(w_{1,1}) \\
                  & \, + x (1-x-y) H_{n-2,2}(w_{1,1}) + y (1-x-y) H_{n-2,1}(w_{1,1})
\end{align*}   
for $n \ge 2$, $\CU_0^2(W_{-1,-1,-1}) := \mathrm{span}\{1\}$  and $\CU_1^2(W_{-1,-1,-1}) :=
    \mathrm{span}\{ x+c_1 , y+c_2\}$, where $c_1$ and $c_2$ are arbitrary constants.
\end{thm}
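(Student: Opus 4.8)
The plan is to treat the three displayed decompositions uniformly, verifying in turn that (i) every summand lies in the $\lambda_n$-eigenspace of $L_{\a,\b,\g}$ with $\lambda_n = -n(n+\a+\b+\g+2)$, (ii) the sum is direct, and (iii) the resulting dimension $n+1$ already exhausts that eigenspace, so that $\CU_n^2$ is the full space of polynomial solutions. The backbone of (iii) is the observation that $L_{\a,\b,\g}$ preserves the filtration by total degree and acts on the graded piece $\Pi_n^2/\Pi_{n-1}^2$ as the scalar $\lambda_n$; since $\lambda_n = \lambda_m$ for $n\ne m$ forces $n+m = -(\a+\b+\g+2)$, the eigenvalues attached to distinct degrees are distinct in every case except $\a=\b=\g=-1$ with $\{n,m\}=\{0,1\}$. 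Away from that single resonance, a triangular argument shows any $\lambda_n$-eigenvector has top degree exactly $n$ and maps injectively to $\Pi_n^2/\Pi_{n-1}^2$, so the eigenspace has dimension at most $n+1$; hence exhibiting $n+1$ independent degree-$n$ solutions proves simultaneously that they span and that $\dim\CU_n^2 = n+1$.

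For the eigenfunction property (i) I would separate the two types of summand. The boundary pieces $H_{n,j}$ are handled by Lemma \ref{rem}: each is spanned by Rodrigue polynomials in which the order of differentiation in the variable attached to the singular face is zero, so the associated parameter may be assigned the value $-1$ without altering the polynomial, and Lemma \ref{rem} then says these pieces already consist of eigenfunctions of $L_{\a,\b,\g}$ with the singular parameter set to $-1$ and eigenvalue $\lambda_n$. The interior and mixed pieces instead carry the linear factors $x,y,1-x-y$ that vanish on the singular faces; for these the key input is an intertwining identity, the multivariate analogue of the classical factorization \eqref{Jacobi-negative}, asserting that multiplication by such a factor $\ell_i$ intertwines $L$ with $\g_i$ replaced by $\g_i+2$ into $L$ with $\g_i=-1$, raising the degree by one and shifting the eigenvalue by exactly $-(\a+\b+\g+2)$ at each step. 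Applied once, twice, or three times — once for each parameter equal to $-1$ — this converts the eigenfunctions of the regular operators $L_{\a,\b,1}$, $L_{\a,1,1}$, $L_{1,1,1}$ supplied by \eqref{diff-eqn2} into $\lambda_n$-eigenfunctions of the singular operator, the eigenvalue arithmetic matching the degree drops $n-1,n-2,n-3$ in the three decompositions.

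For directness (ii) I would restrict a vanishing linear combination successively to the three edges of the triangle. On each edge every summand carrying the linear factor for that edge dies, while the unique summand not carrying it restricts, by Proposition \ref{prop:2.1} and Lemma \ref{lem:H_n^d}, to a nonzero univariate Jacobi polynomial times a nonvanishing factor, forcing that summand's component to be zero; peeling off the three edge contributions leaves the interior term $x^{\epsilon_1}y^{\epsilon_2}(1-x-y)^{\epsilon_3}\,Q$, whose vanishing forces $Q=0$ since multiplication by a fixed nonzero polynomial is injective. This yields the direct-sum decomposition and the count $n+1$. The degenerate low-degree instances must then be checked directly: when the formula produces only empty summands one sets $\CU_0^2=\mathrm{span}\{1\}$, and in the fully singular case the resonance $\lambda_0=\lambda_1=0$ means $L_{-1,-1,-1}$ annihilates all of $\Pi_1^2$, so the eigenvalue-$0$ eigenspace is three-dimensional and splits as $\mathrm{span}\{1\}$ together with the two-parameter family $\mathrm{span}\{x+c_1,y+c_2\}$, which is precisely the source of the free constants $c_1,c_2$.

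The main obstacle I anticipate is establishing the intertwining identity cleanly. Verifying directly that $L_{\a,\b,-1}[(1-x-y)P]=\lambda_n(1-x-y)P$ for all $P\in\CV_{n-1}^2(W_{\a,\b,1})$ requires expanding the second-order operator against the product and showing that all the genuinely first-order cross terms either cancel or reorganize into a multiple of $(1-x-y)$; handling the two- and three-factor products $y(1-x-y)$ and $xy(1-x-y)$ simultaneously is where the bookkeeping is heaviest, and isolating the correct general lemma, valid on $T^d$ rather than only on the triangle, is the step I would expect to consume most of the effort.
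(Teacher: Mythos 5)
Your proposal is correct in its essentials, but it follows a genuinely different route from the paper's. The paper gives no separate $d=2$ argument at all: Theorem \ref{thm:main_d=2_1} is deduced as a special case of the simplex result (Theorem \ref{thm:main_d>2_1}), whose proof never touches the differential operator directly. Instead, product-rule manipulations of the Rodrigue formula (Lemmas \ref{lem:d1} and \ref{lem:d2}, Corollaries \ref{cor:d0} and \ref{cor:d1}, and Lemma \ref{cor:d2}) show that each factor-carrying summand, e.g.\ $(1-|x|)\CV_{n-1}^d(W_{\g,1})$, lies inside the span $\CV_n^d(W_{\g,-\one_k})$ of the analytically continued Rodrigue polynomials, so the eigenvalue equation follows from Remark \ref{remark:g<-1}; the $H$-summands are handled by Lemma \ref{rem}, exactly as you do. Your replacement for this machinery, the intertwining identity, does hold: a short computation gives
\begin{equation*}
L_{\a,\b,-1}\bigl[(1-x-y)P\bigr]=(1-x-y)\bigl[L_{\a,\b,1}P-(\a+\b+2)P\bigr],
\end{equation*}
and the $(x,y,1-x-y)$-symmetry supplies the analogous identities for the factors $x$ and $y$. (Note the shift constant is the sum of the two untouched parameters plus $2$, not $-(\a+\b+\g+2)$ as you wrote; the discrepancy is harmless, since the arithmetic still closes to the eigenvalue $-n(n+\a+\b+\g+2)$.) One point in your plan needs repair: for a mixed summand such as $y\,H_{n-1,3}(w_{\a,1})\subset\CU_n^2(W_{\a,-1,-1})$, the input to the intertwining is \emph{not} an eigenfunction of a regular operator supplied by \eqref{diff-eqn2}; you must first invoke Lemma \ref{rem} to view the $H$-elements as eigenfunctions of the singular operator $L_{\a,1,-1}$, and only then multiply by $y$, so your two tools must be combined rather than applied to disjoint classes of summands. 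Beyond that, your proposal actually establishes more than the paper does: the paper computes $\dim\CU_n^d$ by adding the dimensions of the summands (directness of the sum is asserted, not argued), and it never shows that the exhibited space exhausts the polynomial eigenspace; your edge-restriction proof of directness (via Proposition \ref{prop:2.1} and Lemma \ref{lem:H_n^d}) and your filtration argument (the graded action of $L_{\a,\b,\g}$ on $\Pi_n^2/\Pi_{n-1}^2$ is the scalar $-n(n+\a+\b+\g+2)$, whence at most $n+1$ independent polynomial solutions away from the single resonance $\a=\b=\g=-1$, $\{n,m\}=\{0,1\}$) supply both. What the paper's heavier route buys in exchange is generality and reusability: it proves the statement on $T^d$ for all $k$, and the same Rodrigue identities (notably Lemma \ref{cor:d2}, together with Lemma \ref{orthogo.}) are the backbone of the Sobolev-orthogonality proof of Theorem \ref{thm:main_d>2_2}, which a pure intertwining computation would not provide.
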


From the decomposition, we can deduce further relations among the subspaces.  

\begin{cor}
The subspaces in the Theorem \ref{thm:main_d=2_1} satisfy the relations
\begin{align*}
     \CU_n^2(W_{\a,-1,-1}) = \, & y   \CU_{n-1}^2(W_{\a,1,-1}) + (1-x-y) H_{n-1,2}(w_{1,\a}),\quad n\ge 1\\
     \CU_n^2(W_{-1,-1,-1}) = \, & x   \CU_{n-1}^2(W_{1,-1,-1}) + y (1-x-y) H_{n-2,1}(w_{1,1}), \quad n \ge 2.
\end{align*}
\end{cor}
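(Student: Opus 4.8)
The plan is to derive both identities directly from the two decompositions established in \thmref{thm:main_d=2_1}, treating each displayed relation as a refinement in which one of the three ``boundary-lifting'' factors $x$, $y$, $(1-x-y)$ is pulled out first. The overarching idea is that the subspaces appearing on the right-hand sides of the theorem are themselves built from the same building blocks, so the corollary is essentially a regrouping of terms combined with the base-case identification of $\CU$-spaces of one lower complexity. I would prove the two statements separately, each by matching a decomposition of the claimed right-hand side against the theorem's decomposition of the left-hand side.

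For the first relation, I would start from the theorem's expression for $\CU_n^2(W_{\a,-1,-1})$, namely $y(1-x-y)\CV_{n-2}^2(W_{\a,1,1}) + y H_{n-1,3}(w_{\a,1}) + (1-x-y)H_{n-1,2}(w_{1,\a})$. The goal is to recognize the first two of these three summands as $y$ times the theorem's decomposition of $\CU_{n-1}^2(W_{\a,1,-1})$. Applying the theorem with the parameter triple $(\a,1,-1)$ (which has $\g=-1$ and hence falls under the $\CU_n^2(W_{\a,\b,-1})$ case with $\b=1$) gives
$$
\CU_{n-1}^2(W_{\a,1,-1}) = (1-x-y)\CV_{n-2}^2(W_{\a,1,1}) + H_{n-1,3}(w_{\a,1}),
$$
so multiplying by $y$ reproduces exactly the first two summands above. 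The third summand $(1-x-y)H_{n-1,2}(w_{1,\a})$ is left untouched, yielding the asserted identity. The second relation is handled the same way: I would peel off the factor $x$ from the $(-1,-1,-1)$ decomposition, grouping $xy(1-x-y)\CV_{n-3}^2(W_{1,1,1}) + xyH_{n-2,3}(w_{1,1}) + x(1-x-y)H_{n-2,2}(w_{1,1})$ and recognizing this as $x$ times the theorem's decomposition of $\CU_{n-1}^2(W_{1,-1,-1})$ (the $\CU_n^2(W_{\a,-1,-1})$ case with $\a=1$), while the fourth summand $y(1-x-y)H_{n-2,1}(w_{1,1})$ survives unchanged.

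The main obstacle I anticipate is not the algebraic regrouping, which is essentially bookkeeping, but verifying that the decompositions remain \emph{direct sums} after the regrouping and that the index shifts and parameter substitutions are internally consistent — in particular checking that the $H_{n,j}$ spaces and the $\CV$-spaces carry the correct parameters after the factor is extracted, and that the identification of $\CU_{n-1}^2(W_{\a,1,-1})$ with the bracketed group is legitimate at the boundary values of $n$ (for instance that the dimension count $\dim\CU_n^2 = n+1$ is respected on both sides and that the small-$n$ conventions $\CV_m^2 = H_{m,j} = \emptyset$ for $m<0$ propagate correctly). I would therefore also confirm the base cases explicitly: for the first relation at $n=1$ and for the second at $n=2$, one must check that the stated right-hand sides collapse to the correct low-degree $\CU$-spaces listed in the theorem, using the emptiness conventions to discard the vanishing summands.
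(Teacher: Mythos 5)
Your proposal is correct and matches the paper's intent exactly: the paper offers no separate proof of this corollary, stating only that it follows ``from the decomposition'' of Theorem \ref{thm:main_d=2_1}, and your regrouping argument---recognizing the first two (resp.\ three) summands of the theorem's decomposition as $y\,\CU_{n-1}^2(W_{\a,1,-1})$ (resp.\ $x\,\CU_{n-1}^2(W_{1,-1,-1})$) via the theorem applied at the shifted parameters and degree---is precisely that deduction, with the parameter bookkeeping and the range restrictions $n\ge 1$, $n\ge 2$ handled correctly.
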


\begin{rem}
In the statement of the theorem, we could also assume either $\a =-1$ or $\b = -1$ instead of $\g =-1$. 
The result then can be obtained via permutation of $(\a,\b,\g)$ and the corresponding permutation of
$(x,y,1-x-y)$. We give one example explicitly: 
\begin{align*}
   \CU_n^2(W_{-1,-1, \g}) =  x  y \CV_{n-2}^2(W_{1,1, \g}) + x H_{n-1,2}(w_{\g,1})  + y H_{n-1,1}(w_{\g,1}).
\end{align*}   
\end{rem}

Our second problem is to identify an inner product under which the eigenspaces in the 
Theorem \ref{thm:main_d=2_1} are orthogonal. 

\begin{defn} \label{thm:main_d=2_2} 
Let $\l_i$ and $\l_{i,j}$ be nonnegative numbers. Define the bilinear forms 
\begin{align*}
 \la f, g \ra_{\a,\b,-1}: = &\,  \int_{T^2} \left [x  \partial_x f(x,y) \partial_x g(x,y) + y \partial_y f(x,y) \partial_y g(x,y) \right]
               x^\a y^\b  dxdy \\
        &\, + \l_1 \int_0^1 f(x,1-x)g(x,1-x) x^\a(1-x)^\b dx \\
 \la f, g \ra_{\a,-1,-1}: = &\,   \int_{T^2} \partial_y f(x,y) \partial_y g(x,y)  x^\a dxdy, \\
        &\, + \l_1 \int_0^1 \partial_x f(x,0) \partial_x g(x,0) x^{\a+1} dx + \l_{1,0} f(1,0) g(1,0) \\
 \la f, g \ra_{-1,-1,-1}: = &\,   \int_{T^2} \partial_{xy} f(x,y) \partial_{xy} g(x,y) (1-x-y) dxdy, \\
        &\,  + \l_1 \int_0^1 \partial_x f(x,0) \partial_x g(x,0) dx + \l_2 \int_0^1 \partial_y f(0,y) \partial_y g(0,y) dy \\
        & \, + \l_{1,0} f(1,0) g(1,0) + \l_{0,1} f(0,1) g(0,1) + \l_{0,0} f(0,0) g(0,0).        
\end{align*}
\end{defn}

\begin{thm}
Let $\a,\b > -1$. For $n = 0,1,2, \ldots$, the following holds: 
\begin{enumerate}[$(1)$]
\item If $\l_1 > 0$ then  $\la f, g \ra_{\a,\b,-1}$ is an inner product for which $\CU_n^2(W_{\a,\b,-1})$ 
is the space
of orthogonal polynomials of degree $n$; 
\item If $\l_1 > 0$ and $\l_{1,0} > 0$, then  $\la f, g \ra_{\a,-1,-1}$ is an inner product for which 
$\CU_n^2(W_{\a,-1,-1})$ 
is the space of orthogonal polynomials of degree $n$; 
\item If $\l_1 > 0$, $\l_2 > 0$ and at least one of $\l_{1,0}, \l_{0,1}, \l_{0,0}$ is positive, then  
$\la f, g \ra_{-1,-1,-1}$ is an inner product for which $\CU_n^2(W_{-1,-1,-1})$ is the space of 
orthogonal polynomials of degree $n$, where we redefine $\CU_1^2(W_{-1,-1,-1})$ as
$$
  \CU_1^2(W_{-1,-1,-1}) = \mathrm{span} \left\{x - \frac{\l_{1,0}}{\l_{1,0}+\l_{0,1}+\l_{0,0}},
    y - \frac{ \l_{0,1}}{ \l_{1,0}+\l_{0,1}+\l_{0,0}}  \right\}.
$$  
\end{enumerate}
\end{thm}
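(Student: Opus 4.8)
The plan is to prove each of the three parts by showing two things separately: first, that the given bilinear form is genuinely an inner product on $\Pi^2$ (positive definiteness) under the stated positivity hypotheses on the $\l$'s; and second, that the basis of $\CU_n^2$ described in Theorem~\ref{thm:main_d=2_1} is orthogonal with respect to it, with elements of different degrees orthogonal too. The structural idea is that each bilinear form is built from a ``bulk'' piece involving derivatives of top order (matching the factor that appears in the decomposition of $\CU_n^2$) plus lower-dimensional ``face'' terms that are integrals over edges or point evaluations at vertices. The key mechanism I would exploit is that differentiation converts the singular Sobolev problem into a regular $L^2$ orthogonality problem: for instance, applying $\partial_x$ or $\partial_y$ to the factored generators $(1-x-y)\CV_{n-1}^2(W_{\a,\b,1})$ reduces the bulk integral to the classical inner product $\la\cdot,\cdot\ra_{\a,\b,1}$ on a nonsingular simplex, where orthogonality is already known.

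First I would verify positive definiteness. The subtlety is that each bulk term annihilates low-degree polynomials: in $\la\cdot,\cdot\ra_{\a,\b,-1}$ the integrand $x\,\partial_x f\,\partial_x f + y\,\partial_y f\,\partial_y f$ vanishes exactly when $f$ is constant, so the face term $\l_1\int_0^1 f(x,1-x)^2 x^\a(1-x)^\b\,dx$ must detect that constant; since $\l_1>0$ this integral is strictly positive for a nonzero constant, giving $[f,f]=0\Rightarrow f\equiv 0$. For $\la\cdot,\cdot\ra_{\a,-1,-1}$ the kernel of the $\partial_y$-bulk term consists of functions independent of $y$; on that kernel the form reduces to $\l_1\int_0^1 (\partial_x f(x,0))^2 x^{\a+1}\,dx + \l_{1,0}f(1,0)^2$, which with $\l_1,\l_{1,0}>0$ vanishes only for the zero function. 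The $(-1,-1,-1)$ case is the most delicate: the $\partial_{xy}$-bulk term kills every polynomial of the form $a+b(x)+c(y)$, and the nested face and vertex terms must successively pin down the linear and constant parts, which is why the hypothesis requires $\l_1,\l_2>0$ together with at least one positive vertex weight.

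Next I would establish orthogonality within $\CU_n^2$ and across degrees. The plan is to differentiate the generators according to the leading bulk operator and invoke Lemma~\ref{lem:epd} (or the underlying relation \eqref{monomial-diff}) together with the classical orthogonality on the regular simplices $W_{\a,\b,1}$, $W_{\a,1,1}$, $W_{1,1,1}$. For two generators of the same degree coming from different summands in the direct sum — say one from $(1-x-y)\CV_{n-1}^2(W_{\a,\b,1})$ and one from $H_{n,3}(w_{\a,\b})$ — the bulk term pairs the first against derivatives that already vanish in the classical $L^2$ sense, while the boundary restriction (via Proposition~\ref{prop:2.1}) reduces the face term to a one-dimensional Jacobi orthogonality, so the two contributions are handled independently. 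Cross-degree orthogonality follows from the same reductions since the reduced classical inner products separate degrees automatically. For the redefined $\CU_1^2(W_{-1,-1,-1})$, I would simply compute the three constants by imposing orthogonality of $x-c_1$ and $y-c_2$ to the constant $1$, which forces $c_1=\l_{1,0}/(\l_{1,0}+\l_{0,1}+\l_{0,0})$ and similarly for $c_2$; this is the explicit balancing of the vertex weights.

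The main obstacle I anticipate is the bookkeeping in the $(-1,-1,-1)$ case, where the decomposition of $\CU_n^2$ has four summands and the inner product has a bulk term, two edge terms, and three vertex terms. Establishing that the pairing between, say, an element of $xy\,(1-x-y)\CV_{n-3}^2(W_{1,1,1})$ and an element of one of the mixed spaces like $x(1-x-y)H_{n-2,2}(w_{1,1})$ vanishes requires carefully tracking which face terms survive after restriction and checking that the mixed second-derivative $\partial_{xy}$ of the factored polynomials reduces cleanly to the regular inner product $\la\cdot,\cdot\ra_{1,1,1}$. Each pair among the four summands must be checked, and one must confirm that the vanishing of the boundary restrictions (zero on the appropriate edges by the $x$, $y$, or $(1-x-y)$ factors) eliminates the unwanted cross terms. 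I expect this to be systematic rather than conceptually hard once the reduction-by-differentiation principle is in place, but it is where the bulk of the careful verification lies.
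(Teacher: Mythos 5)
Your sketch of positive definiteness and your computation of the constants in the redefined $\CU_1^2(W_{-1,-1,-1})$ are correct and essentially coincide with the paper's arguments (the paper proves the $d=2$ theorem as a special case of Theorem \ref{thm:main_d>2_2}). The genuine gap is in your orthogonality mechanism. You propose to differentiate the generators and invoke Lemma \ref{lem:epd} and relation \eqref{monomial-diff}; but those results concern the \emph{monomial} orthogonal polynomials $V_\nb^\bg$, not the factored generators appearing in the decomposition of Theorem \ref{thm:main_d=2_1}. Concretely, for $P \in \CV_{n-1}^2(W_{\a,\b,1})$ one has $\partial_x\left[(1-x-y)P\right] = -P + (1-x-y)\partial_x P$, which is not a classical orthogonal polynomial of any parameters, and the bulk weight $x^{\a+1}y^\b$ does not match $W_{\a,\b,1}$, so your claimed reduction of the bulk integral to $\la\cdot,\cdot\ra_{\a,\b,1}$ fails. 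The paper itself stresses that the monomial/limiting route ``neither gives the final answer nor it proves the orthogonality''; indeed $V_{0,n}^{(\a,-1,-1)}$ is not even well defined, so this mechanism cannot reach cases (2) and (3) at all. What the paper actually uses, and what is absent from your proposal, is: (a) integration by parts that throws the derivatives onto the test polynomial $g$, producing factors such as $x_i\,\partial_i^2 g + (\g_i+1)\,\partial_i g$ of degree at most $n-2$, as in \eqref{[f,g]-case1.1}; (b) Lemma \ref{orthogo.}, the orthogonality of Rodrigue polynomials with negative-integer parameters against low-degree polynomials relative to $W_{\g,\zero_k}$, which is what makes the bulk terms vanish in the deeper singular cases; and (c) Lemma \ref{lem:H_n^d}, identifying the restrictions of the $H$-spaces to faces as orthogonal polynomials on those faces, which kills the edge and vertex terms.

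You have also misidentified the statement to be proved. ``Space of orthogonal polynomials of degree $n$'' means that every element of $\CU_n^2$ is orthogonal to \emph{all} of $\Pi_{n-1}^2$; combined with $\dim \CU_n^2 = n+1$ this forces equality, and this is exactly what the paper verifies piece by piece. Your plan instead targets orthogonality ``within $\CU_n^2$'' between summands, plus cross-degree orthogonality. The within-degree goal is both unnecessary and, for cases (2) and (3), unattainable: the remark following Theorem \ref{thm:main_d>2_2} states that for $k \ge 2$ the decomposition \eqref{V-decom} is in general \emph{not} an orthogonal sum, so the pairing you single out in your last paragraph --- an element of $xy(1-x-y)\CV_{n-3}^2(W_{1,1,1})$ against one of $x(1-x-y)H_{n-2,2}(w_{1,1})$ --- need not vanish; only the case $k=1$ yields an orthogonal decomposition, and that requires the separate Euler-formula argument in the proof of part (iii). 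Finally, the cross-degree route would additionally require the spanning fact $\CU_0^2 \oplus \cdots \oplus \CU_{n-1}^2 = \Pi_{n-1}^2$ (exact degree of the elements of $\CU_m^2$ plus a dimension count), which you never establish. As written, the proposal cannot be completed along the lines described.
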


With respect to $\la \cdot, \cdot \ra_{\a,\b,-1}$, the decomposition $\CU_n^2(W_{\a,\b,-1})$ is 
an orthogonal one, that is, 
$$
\CU_n^2(W_{\a,\b,-1}) =   (1-x-y) \CV_{n-1}^2(W_{\a,\b,1}) \bigoplus H_{n,3}(w_{\a,\b}), 
$$
as will be shown in (iii) of Theorem  \ref{thm:main_d>2_2}.

Again, by symmetry, we can state the inner product if other parameters are $-1$. We give one example. 
\begin{align*}
\la f, g \ra_{-1,-1, \g}: = &\, \int_{T^2} (\partial_y-\partial_x) f(x,y) 
    (\partial_y -\partial_x) g(x,y)  (1-x-y)^\g dxdy \\
	&\, + \l_1 \int_0^1 \partial_x f(x,0) \partial_x g(x,0) (1-x)^{\g+1} dx \\
	&\,+ \l_2 \int_0^1 \partial_y f(0,y) \partial_y g(0,y) (1-y)^{\g+1} dy + \l_{0,0} f(0,0) g(0,0) 
\end{align*}
where $\l_1, \l_2\ge 0$ with at least one of them positive and $\l_{0,0} > 0$.

\begin{rem} \label{rem:3.2}
Notice that $\la f, g \ra_{-1,-1, \g}$ contains two terms of one-dimenional integral, whereas 
$\la f, g \ra_{\a, -1,-1}$ contains only one such term. Evidently, by symmetry we could include
another term in $\la f, g \ra_{\a,-1,-1}$. In fact, it will become clear in the discussion below that
we could include even more terms if we are willing to use higher order derivatives. Apart from 
consideration of symmetry, as in the three vertices in $\la f, g \ra_{-1.-1.-1}$ in Definition
\ref{thm:main_d=2_2}, we shall keep the number of terms in the inner product minimal below. 
\end{rem}

These theorems will be special cases of the results on $T^d$. We will not give separate proof for $d =2$. 
Instead we give an indication below on how the main terms of the inner products are identitified. 
We start from an observation in \cite{BDFP} that the monic orthogonal polynomials 
$V_{k,n}^{(\a,\b,\g)}(x,y)$ are also orthogonal with respect to the inner product, 
when $\a, \b, \g > -1$, 
\begin{align} \label{eq:[f,g]}
  [ f, g ]_{\a,\b,\g} =  &\, \la f, g\ra_{\a,\b,\g} + \l_1 \la \partial_x f, \partial_x g \ra_{\a+1,\b,\g+1} 
       + \l_2 \la \partial_y f, \partial_y g \ra_{\a,\b+1,\g+1},  
\end{align}
which is a special case of Lemma \ref{lem:epd}. Since $V_{k,n}^{\a,\b,\g}$ are well-defined if
$\g = -1$, letting $\g \mapsto -1$ preserves the orthogonality, which is how the orthogonality 
in $\la f, g \ra_{\a,\b,-1}$ was established \cite{BDFP}, but the method does not give
the decomposition of the orthogonal space. Indeed, the second and the third terms in the right hand
side of $[ f, g ]_{\a,\b,\g}$ make sense for $\g =-1$ as well, whereas for the first term we 
parametrize the integral over $T^2$ as 
$$
    \int_{T^2} f(x,y) dxdy = \int_0^1 \int_0^{1-y} f(x,y) dx dy, 
$$
and apply the limit relation 
$$
       \lim_{\mu \to -1^+} \frac{1}{\mu+1} \int_0^1 f(x) x^{\mu} dx = f(0) 
$$
to conclude that 
\begin{align*}
   \lim_{\g \to -1^+} [f, g]_{\a,\b,\g} =&\, c_{\a,\b} \int_0^1 f(x,1-x) g(x,1-x) x^\a (1-x)^\b dx \\ 
       & +  \l_1 \la \partial_x f, \partial_x g \ra_{\a+1,\b,0} 
       + \l_2 \la \partial_y f, \partial_y g \ra_{\a,\b+1,0}, 
\end{align*}
which is a constant multiple of $\la f, g \ra_{\a,\b,-1}$ when $\l_2 = \l_1$. Since $V_{k,n}^{(\a,\b,-1)}(x)$ 
are well defined for all $0 \le k \le n$, their orthogonality is preserved under the above limit. 

The above limit process can be continued one more time by taking, for example, $\b \to -1$, which gives 
$$
   \lim_{\b, \g \to -1^+} [f, g]_{\a,\b,\g} = c \la f, g \ra_{\a,-1,-1}, 
$$
where $c$ is a constant. However, since $V_{k,n}^{(\a, -1, -1)}(x)$ are not well defined when $k= 0$, 
the limit process at this stage no longer gives the complete answer for our problem. 

If we take the limit $\a \to -1$ in $\la f, g \ra_{\a,-1,-1}$, we end up with 
\begin{align*}
   \lim_{\a \to -1^+} \la f, g \ra_{\a,-1,-1}  = & 
     \l_1 \int_0^1 \partial_x f(x,0) \partial_x g(x,0) dx +  \\ 
     &+  \l_2 \int_0^1 \partial_y f(0,y) \partial_y g(0,y) dy + \l_{1,0} f(1,0) g(1,0)
\end{align*}
which however is no longer an inner product since, for example, it is equal to zero if $f(x,y)= g(x,y) = x y$.
In order to obtain the main terms of the inner product in $\la f,g\ra_{-1,-1,-1}$ we need to go one level up 
by noticing that, by Lemma \ref{lem:epd},  $V_{k,n}^{(\a,\b,\g)}$ are orthogonal with respect to yet one 
more inner product: For $\a, \b, \g > -1$, 
\begin{align} \label{eq:[f,g]2}
  [ f, g ]_{\a,\b,\g} =  &\, \la f, g\ra_{\a,\b,\g} + \l_1 \la \partial_x f, \partial_x g \ra_{\a+1,\b,\g+1}    \\ 
    & + \l_2 \la \partial_y f, \partial_y g \ra_{\a,\b+1,\g+1} 
        + \l_{1,1}    \la \partial_{xy} f, \partial_{xy} g \ra_{\a+1,\b+1,\g+2} \notag
\end{align}
where $\l_1, \l_2, \l_{1,1}$ are nonnegative numbers. Taking the limit three times in the above 
inner product leads to the main terms of the inner product $\la f, g \ra_{-1,-1,-1}$. 

We emphasis, however, that the above limiting process only suggests possible form of the 
Sobolev inner product. It neither gives the final answer nor it proves the orthogonality. 

\subsection{Sobolev orthogonal polynomials on the simplex}

For a fixed positive integer $k$, $1 \le k \le d$, we consider $W_\bg$ with $\g_{d +2-k}= \ldots 
=\g_{d+1} = -1$. For convenience, we introduce the notation
$$
  W_{\g, -\one_k}(x) = W_\bg (x) \vert_{\g_{d+2 -k} =\ldots = \g_{d+1} = -1}, 
$$
where $\one_k := (1,\ldots,1) \in \ZZ^k$ and $-\one_k = (-1,\ldots,-1)$, and we always assume 
that the subindex of $W_\bg$ is a vector in $\RR^{d+1}$ so that, with the length of the vector 
$-\one_k$ being $k$, the length of $\g$ is $d-k+1$,  $\g = (\g_1,\ldots,\g_{d-k+1})$. In particular,  
$$
  W_{\g, -\one_k}(x) = x_1^{\g_1} \ldots x_{d-k+1}^{\g_{d-k+1}} x_{d-k+2}^{-1} \ldots x_d^{-1} (1-|x|)^{-1}.
$$
In these notations we can allow $k = d+1$ if we regard $\g \in \RR^{d-k+1}$ as nonentity 
when $k = d+1$ and write $W_{- \one_{d+1}}(x)  = W_{- \one}(x)$. 

Recall the subspace $H_{n,S_j}^d(W_\bg)$ in the Definition \ref{defn:H_n^d}.  From Remark \ref{rem}, 
since the values of $\g_i$ for $i \in S_{j}$ can be arbitraty in the basis of $H_{n,S_j}^d(W_\bg)$, we 
can set $ \{\g_i=0, \, i \in S_{j}\}$. Let
$$
S_{k,j}(\s):=\{\ell: \quad\varepsilon_{\ell}=0 \,\, \\,\,  
   (\varepsilon_{1},\ldots,\varepsilon_{d+1})=(\g,(\zero_{k-j},\one_j)\s)\},
$$
so that $S_{k,j}(\s)$ contains the indices of zero elements in $(\zero_{k-j},\one_j)\s$, and we
treat $S_{k,j}(\s)$ as a vector whose elements are arranged according to their order in 
$(\g,(\zero_{k-j},\one_j)\s)\}$. To simplify the notation, we define for $j =0,1,\ldots, k-1$,
$$
  H_{n}^d\left(W_{\g, (\zero_{k-j},\one_{j})\s}\right ): = 
       H_{n,S_{k,j}(\s)} ^d\left(W_{\g, (\zero_{k-j},\one_{j})\s} \right),
$$
where if $\s=id$, then $S_{k,0} = \{d+2-k, \ldots, d+1\}$ and $\one_0$ is a nonentity. It then follows 
by Lemma \ref{lem:H_n^d} that 
\begin{enumerate}[$(i)$]
\item $H_n^d(W_{\g, (\zero_{k-j},\one_{j})\s})$ is a subspace of $\CV_n^d (W_{\g,  (\zero_{k-j},\one_{j})\s} )$;
\item The restriction of $H_n^d(W_{\g, (\zero_{k-j},\one_{j})\s})$ on the face $T_{S_{k,j}(\s)}^{d-(k-j)}$ 
  agrees with the space $\CV_n^{d-(k-j)}({W_{\g, (\zero_{k-j},\one_{j})\s}}\big\vert_{T_{S_{k,j}(\s)}^{d-(k-j)}})$
  with variables in $\{x_i: i \in (S_{k,j}(\s))^c\}$.
\end{enumerate}
In particular, if $\s=id$, then $S_{k,j}=\{d-k+2,\ldots,d+1-j\}$ and
$$
H_n^d(W_{\g, (\zero_{k-j},\one_{j})})\big \vert_{{T^{d-(k-j)}_{S_{k,j}}}}
   =\CV_n^{d-(k-j)}\Big({W_{(\g, \zero_{k-j},\one_{j})}}\big\vert_{{T^{d-(k-j)}_{S_{k,j}}}}\Big)
$$
with variables $x_1,\ldots,x_{d-k+1}, x_{d+2-j},\ldots,x_d$.
Furthermore, by definition, we also have $H_{n}^d\left(W_{\g, \zero_k}\right ) = \emptyset$ 
if $k = d$ or $d+1$. 

We adopt the convention that 
$\sum_{i=j}^k  a_i = 0$ if $k < j$ regardless the values of $a_i$.

\begin{thm}\label{thm:main_d>2_1}
Let $k$ be a fixed integer, $1 \le k \le d+1$ and $\g \in \RR^{d-k+1}$. For $n \in \NN_0$, the differential 
equation 
\begin{align*}
           L_{\g,-\one_k} u =  - n(n+ |\g| + d - k) u 
\end{align*}
has a solution space  $\CU_n^d(W_{\g, -\one_k})$ of polynomials of degree $n$ with 
$$
   \dim  \CU_n^d(W_{\g, - \one_k})  = \binom{n+d-1}{n}, 
$$
which can be decomposed as a direct sum as follows: 
 
\noindent $(i)$ if $k =1$, then $\CU_0^d(W_{\g, -1}) = \mathrm{span} \{1\}$ and 
for $n \ge 1$, 
\begin{align} \label{V-decom-k=1}
    \CU_n^d (W_{\g, - 1}) =  x_{d+1}  \CV_{n-1}^d (W_{\g,1}) + H_n^d(W_{\g, 0}). 
\end{align}
 
\noindent $(ii)$ if $2 \le k \le d$, then $\CU_0^d(W_{\g, - \one_k}) = \mathrm{span} \{1\}$ and 
for $n \ge 1$, 
\begin{align} \label{V-decom}
    \CU_n^d (W_{\g, - \one_k}) = &\, x_{d+1-(k-1)} \cdots x_{d+1}  \CV_{n-k}^d (W_{\g, \one_k}) \\
    &   + \sum_{j=1}^{k-1} \sum_{\s \in \mathcal{G}_k}  x_{j,k}(\s) H_{n-j}^d\left(W_{\g, (\zero_{k-j},\one_{j})\s}\right)
   + H_n^d(W_{\g,\zero_k}), \notag
\end{align}
where $x_{d+1} = 1-|x|$, and with $x_0: =1$ and  $(\varepsilon_1,\ldots, \varepsilon_{d+1})
= (\zero_{d-k+1}, (\zero_{k-j},\one_{j})\s)$,
$$
    x_{j,k}(\s) :=\prod_{i =1}^{d+1} x_{\d_i} \quad\hbox{with}\quad 
       \d_i = i \,\, \hbox{if} \,\,  \ve_i =1, \,\, \hbox{and} \,\, \d_i = 0 \,\, \hbox{if} \,\, \ve_i =0.         
$$

\noindent $(iii)$ if $k = d+1$, then   $\CU_0^d(W_{-\one}) = \mathrm{span} \{1\}$,  
$\CU_1^d(W_{-\one}) = \mathrm{span} \{x_1+c_1,\ldots, x_d+c_d\}$ for arbitrary real numbers 
$c_1,\ldots, c_d$, and for $n \ge 2$,
\begin{align*}
 \CU_n^d(W_{- \one}) = x_1 \cdots x_{d+1}  \CV_{n-d-1}^d (W_{\one})  
         + \sum_{j=2}^{d} \sum_{\s \in \mathcal{G}_{d+1}}  x_{j,d+1}(\s) 
                  H_{n-j}^d\left(W_{(\zero_{d+1-j},\one_{j})\s}\right),
\end{align*}
where, with  $(\varepsilon_1,\ldots, \varepsilon_{d+1})= (\zero_{d+1-j},\one_{j})\s$,
$$
    x_{j,d+1}(\s) :=\prod_{i =1}^{d+1} x_{\d_i} \quad\hbox{with}\quad
        \d_i = i \,\, \hbox{if} \,\,  \ve_i =1, \,\, \hbox{and} \,\, \d_i = 0 \,\, \hbox{if} \,\, \ve_i =0.       
$$

\end{thm}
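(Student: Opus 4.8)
The plan is to reduce the whole statement to two structural facts and then a dimension count.

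The first fact is a \emph{spectral dimension bound}. Since every term of $L_\bg$ either preserves or lowers total degree, $L_\bg$ maps $\Pi_n^d$ into itself and acts triangularly with respect to the degree filtration, the degree-$m$ graded piece contributing the scalar $-m(m+|\bg|+d)$ (on a homogeneous $P$ of degree $m$, Euler's relation gives the top-order part $-m(m+|\bg|+d)P$). Writing $c:=|\g|+d-k$, so that $|\bg|+d=c$ for $\bg=(\g,-\one_k)$, the equality $-m(m+c)=-n(n+c)$ with $m<n$ forces $m+n=-c$. Since $\g_i>-1$ gives $c>-1$ for $k\le d$ and $c=-1$ for $k=d+1$, this never occurs for $n\ge 1$ (resp.\ $n\ge 2$), so the polynomial solution space of $L_{\g,-\one_k}u=-n(n+c)u$ is exactly $\ker\big(L_{\g,-\one_k}+n(n+c)\big)$ on $\Pi_n^d$; triangularity then gives it dimension $\binom{n+d-1}{n}$ and shows it contains no polynomial of degree $>n$. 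The degenerate cases $n=0$ and $(k=d+1,\,n=1)$ are settled by inspection: for $k=d+1$ the operator $L_{-\one}$ has no first-order part (each $\g_i+1=0$ and $|\bg|+d+1=0$), so every linear polynomial solves $L_{-\one}u=0$, which produces the stated $\CU_1^d(W_{-\one})$ with its free constants $c_i$.

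The second fact is an \emph{intertwining lemma}. For $1\le\ell\le d$ a direct computation yields
$$
  L_\bg(x_\ell u) = x_\ell\, L_{\bg+2e_\ell}u + (\g_\ell+1)u - (|\bg|+d+1)x_\ell u,
$$
where $\bg+2e_\ell$ denotes $\bg$ with its $\ell$-th entry raised by $2$; the case $\ell=d+1$, i.e.\ multiplication by $1-|x|$, follows from the permutation symmetry of $T^d$ and of $L_\bg$. The decisive point is that when $\g_\ell=-1$ the inhomogeneous term $(\g_\ell+1)u$ vanishes, so multiplication by $x_\ell$ sends an eigenfunction of $L_{\bg+2e_\ell}$ (parameter $+1$) to an eigenfunction of $L_\bg$ (parameter $-1$), lowering the eigenvalue by $|\bg|+d+1$. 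Using this, membership of each summand in the solution space is bookkeeping. For the bulk term I apply the lemma $k$ times, peeling $x_{d+1},\dots,x_{d-k+2}$ and raising each parameter from $-1$ to $+1$; the eigenvalue shifts telescope from $-(n-k)(n+|\g|+d)$ to $-n(n+c)$. For a term $x_{j,k}(\s)H_{n-j}^d(W_{\g,(\zero_{k-j},\one_j)\s})$ I first invoke \lemref{rem} to assign the value $-1$ to the $k-j$ parameters on the vanishing coordinates (legitimate since the corresponding $n_\ell=0$), so $H_{n-j}^d$ already consists of eigenfunctions of the target operator except on the $j$ raised coordinates; multiplying by $x_{j,k}(\s)$ and applying the lemma $j$ more times lowers those parameters to $-1$, and the eigenvalue again telescopes to $-n(n+c)$.

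It remains to prove the sum is \emph{direct} of total dimension $\binom{n+d-1}{n}$, and this is where the main work lies. For the count I use \lemref{lem:H_n^d}(ii): the restriction of $H_{n-j}^d$ to the face $T^{d-(k-j)}_{S_{k,j}(\s)}$ is all of $\CV^{d-(k-j)}_{n-j}$, so $\dim H_{n-j}^d=\binom{n+d-k-1}{n-j}$, and there are $\binom{k}{j}$ arrangements, whence the total is $\sum_{j=0}^{k}\binom{k}{j}\binom{n+d-k-1}{n-j}=\binom{n+d-1}{n}$ by Vandermonde. Directness I obtain by restricting to the faces on which a prescribed set of singular coordinates vanishes, ordered by increasing codimension: the summand indexed by $j$ is divisible by its $j$ raised coordinates and restricts injectively (again \lemref{lem:H_n^d}) to a full $\CV$-space exactly on the face where its $k-j$ complementary coordinates vanish, whereas every other summand either is divisible by one of those complementary coordinates (hence vanishes on that face) or carries a different raised set; peeling from the deepest faces, where only $H_n^d(W_{\g,\zero_k})$ survives, outward, forces each summand to vanish in turn. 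Combining directness, the Vandermonde identity, and the spectral bound shows the direct sum fills the entire solution space. The genuinely delicate point throughout is keeping the index sets $S_{k,j}(\s)$, the monomials $x_{j,k}(\s)$, and the faces $T_{S_{k,j}(\s)}$ aligned so that the face-peeling cleanly separates the summands; I expect this combinatorial alignment, rather than any single computation, to be the real obstacle.
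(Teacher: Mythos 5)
Your proposal is correct, and it reaches the theorem by a genuinely different route than the paper. The paper proves membership of each summand in the eigenspace through identities for analytically continued Rodrigue polynomials: Lemmas \ref{lem:d1} and \ref{lem:d2}, their iterates Corollary \ref{cor:d0} and the inductively proved \lemref{cor:d2}, which give for instance $x_{d+2-k}\cdots x_d(1-|x|)\CV_{n-k}^d(W_{\g,\one_k})\subset \CV_n^d(W_{\g,-\one_k})$, after which Remark \ref{remark:g<-1} converts membership in the Rodrigue space into the differential equation. Your intertwining identity $L_\bg(x_\ell u)=x_\ell L_{\bg+2e_\ell}u+(\g_\ell+1)u-(|\bg|+d+1)x_\ell u$ (I verified it, and the eigenvalue bookkeeping telescopes exactly as you claim, since a degree-$m$ eigenfunction for $\bg+2e_\ell$ is carried to a degree-$(m+1)$ eigenfunction for $\bg$ when $\g_\ell=-1$) replaces all of that machinery by pure operator calculus, avoiding the delicate multi-variable induction of \lemref{cor:d2} entirely. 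Moreover, your proof supplies two points the paper's proof leaves implicit: first, directness of the sum --- the paper's dimension count simply adds the dimensions of the summands, which presupposes directness, whereas your face-peeling argument (restrict to the faces $T_{S_{k,j}(\s)}$ in order of increasing $j$, note that every other summand is divisible by a coordinate vanishing there, and use the injectivity of restriction from item (ii) of \lemref{lem:H_n^d}) actually proves it; second, completeness --- your triangularity argument pins the polynomial eigenspace at dimension exactly $\binom{n+d-1}{n}$ in the nondegenerate cases, so the constructed sum exhausts all polynomial solutions, a fact the paper never establishes. What the paper's route buys in exchange is reusable information: the explicit Rodrigue representations and the inclusions into $\CV_n^d(W_{\g,-\one_k})$ are precisely what drive the integration-by-parts computations in the proof of \thmref{thm:main_d>2_2} (Cases 2.1--2.3), so your cleaner argument for this theorem would not substitute for those lemmas downstream. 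Two minor soft spots in your write-up: the $\ell=d+1$ case of the intertwining identity should be done by the explicit affine change of variables $u_1=1-|x|$ (as in the proof of \lemref{d>2 bases}) rather than a bare appeal to symmetry; and the triangularity claim should record that the diagonal blocks in degrees $m<n$ are invertible scalars, so every homogeneous polynomial of degree $n$ completes uniquely to an eigenfunction --- that is what makes the eigenspace dimension exactly, and not merely at most, $\binom{n+d-1}{n}$.
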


In the case of $k = d+1$, that is, all $\g_j = -1$, the differential equation becomes 
$$
           L_{-\one}  P = - n(n-1) P.
$$
For $n=1$, any polynomial of the form $P_i(x)=x_i+c_i$ with $c_i$ being an arbitrary constant 
satisfies the equation $L_{-\one} P =  0$. The values of the constant will be fixed when we
discuss the Sobolev orthogonality.

As an example, we give the decomposition for $k =3$ and $k < d$ explicitly: 
\begin{align*}
   \CU_n^d(W_{\g, -1,-1,-1}) =  & \, x_{d-1} x_d x_{d+1}  \CV_{n-3}^d (W_{\g,1,1,1}) 
       + x_{d-1} x_d H_{n-2}^d\left(W_{\g,1,1,0}\right) \\
   &  + x_{d-1} x_{d+1} H_{n-2}^d\left(W_{\g,1,0,1}\right) +  x_{d} x_{d+1} H_{n-2}^d\left(W_{\g,0,1,1}\right) \\
   &   + x_{d-1} H_{n-1}^d\left(W_{\g,1,0,0}\right) + x_d H_{n-1}^d\left(W_{\g,0,1,0}\right) \\
   &   + x_{d+1} H_{n-1}^d\left(W_{\g,0,0,1}\right) + H_{n}^d(W_{\g,0,0,0}).
\end{align*}

Our next theorem shows that the elements in $\CU_n^d(W_{\g, -\one_k})$ are orthogonal polynomials
with respect to an inner product of the Sobolev type. We need the following notations: for 
$1 \le k \le d-1$, define 
$$
\xb_k:=(x_1,\ldots,x_k),\quad |\xb_k|=x_1+\ldots+x_k,\quad \bg_k:=(\g_1,\ldots,\g_k).
$$
Recall that, for $I \subset \ZZ_{d+1}$ with $|I| =i \le d$,  the face $T_I^{d-i}$ of  the simplex $T^d$ 
is defined by $T_I^{d-i} = \{x \in T^d: x_\ell =0, \ell \in I\}$. If $I = \{j_1,\ldots,j_i\}$, we define
$\xb_I := (x_{j_1},\ldots, x_{j_i})$.  

\begin{defn} \label{defn:ipd}
For $\g \in \RR^d$, $\g_{d+1} = -1$ and $\l \ge 0$, define 
\begin{align*}
  \la f, g\ra_{\g, -1}:=&\, \sum_{i=1}^{d} \int_{T^d} x_i \frac{\partial f}{\partial x_i} \frac {\partial g}{\partial x_i} x^{\g} dx 
 +\l \int_{T^{d-1}_{\{d+1\}}} f(x) g(x) \xb_{d-1}^{\bg_{d-1}} (1-|\xb_{d-1}|)^{\g_d} d \xb_{d-1}.
%
\end{align*}
For $2 \le k \le d$, let $\mb_k=\left\{d-k+2,\ldots, d\right\}$ and $\mb_{k}^{+}=\left\{d-k+2,\ldots, d+1\right\}$. For $\g \in \RR^{d+1-k}$ and $\l, \l_i, \l_I \ge 0$, 
define
\begin{align*}
  \la f, g\ra_{\g, -\one_k} := & \int_{T^d} \frac{\partial^{k-1}f}{\partial x^{\mb_k}} \frac{\partial^{k-1}g}
      {\partial x^{\mb_k}} (1- |x|)^{|\mb_k| -1} \xb_{d+1-k}^\g dx  \\ 
& +\sum_{i=1}^{k-2} \sum_{\substack{ I\subset {\mb}_{k}  \\ |I| = i}} {\l_I} \int_{T_{I^c}^{d-k+i+1}} 
    \frac{\partial^{i}f}{\partial x^{I}}\frac{\partial^{i}g}{\partial x^{I}}  (1- |x|)^{i-1} \xb_{d+1-k}^\g 
    d\xb_{d-k+1} d \xb_I \\
& + \sum_{i=1}^{d-k+1} \l_i \int_{T_{\mb_k}^{d-k+1}} x_i 
        \frac{\partial f} {\partial x_i} \frac{\partial g}{\partial x_i}\xb_{d+1-k}^\g d\xb_{d-k+1}\\
& +\l \int_{T^{d-k}_{\mb_{k}^{+}}} f(\xb_{d-k+1},\zero) \ g(\xb_{d-k+1},\zero) \xb_{d-k+1}^{\g}
   d\xb_{d-k},
\end{align*}
where if $k=d$, we replace the last term by $\l f(e_1) g(e_1)$, with $\l \ge 0$.
Finally, when all $\g_i=-1$, for $\l_I \ge 0$ and  $\l_{i,0} \ge 0$, define
\begin{align*}
  \la f, g\ra_{ -\one} := & \int_{T^d} \frac{\partial^{d}f}{\partial x^{\ZZ_{d}}} \frac{\partial^{d}g}
      {\partial x^{\ZZ_{d}}} (1- |x|)^{d-1} dx  \\ 
&+\sum_{i=1}^{d-1} \sum_{\substack{ I\subset {\ZZ}_{d}  \\ |I| = i}} {\l_I} \int_{T_{I^c}^{i}} 
      \frac{\partial^{i}f}{\partial x^{I}}\frac{\partial^{i}g}{\partial x^{I}}  (1- |x|)^{i-1} d\xb_{I} \\
& + \l_{0,0} f(e_0) g(e_0)+ \l_{1,0} f(e_1) g(e_1)+\ldots +  \l_{d,0} f(e_d) g(e_d). 
\end{align*}
\end{defn}

In the following we shall write $\la f, g \ra_{\g, - \one_k}$ with $k = 1,2,\ldots, d+1$ for all 
three cases. Whenever  $\la f, g \ra_{\g, - \one_k}$ is an inner product, we can define
a space of orthogonal polynomials. Our next result says that the space of orthogonal 
polynomials with respect to $\la f, g \ra_{\g, - \one_k}$ is exactly $\CU_n^d(W_{\g, - \one_k})$.
First, however, we need to specify $\CU_1^d(W_{-\one})$: We redefine
$$
  \CU_1^d(W_{-\one}) = \mathrm{span} \{x_1+c_1,\ldots, x_d+ c_d\} \quad \hbox{with} \quad 
      c_j = -\frac{\l_{j,0}}{ \l_{0,0}+\l_{1,0}+\ldots+\l_{d,0}}. 
$$

\begin{thm}\label{thm:main_d>2_2}
For $1 \le k \le d+1$, let $\g \in \RR^{d+1-k}$ with $\g_j > -1$, $1 \le j \le d+1-k$. 
\begin{enumerate}[$(i)$]
\item $\la f, g \ra_{\g,-\one_k}$ is an inner product if $\l, \l_i, \l_I>0$ for $1 \le k \le d$ and,
in addition, at least one of $\l_{j,0}$ is positive for $k = d+1$. 
\item $\CU_n^d(W_{\g,-\one_k})$ is the space of orthogonal polynomials with respect to 
$\la f, g \ra_{\g,-\one_k}$.
\item In the case of $k =1$, the decomposition of \eqref{V-decom-k=1} is orthogonal, 
that is, 
$$
    \CU_n^d (W_{\g, - 1}) =  x_{d+1}  \CV_{n-1}^d (W_{\g,1})  \bigoplus H_n^d(W_{\g, 0}). 
$$
\end{enumerate}
\end{thm}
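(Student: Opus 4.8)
The plan is to establish (i) first, then the one-step computation behind (iii), and finally (ii), for which both (i) and (iii) are needed. The backbone is Theorem~\ref{thm:main_d>2_1}, which both decomposes $\CU_n^d(W_{\g,-\one_k})$ into explicit blocks and gives $\dim\CU_n^d(W_{\g,-\one_k})=\binom{n+d-1}{n}=\dim\CV_n^d$. Granting (i), the form is a genuine inner product, so $\CU_n^d(W_{\g,-\one_k})\cap\Pi_{n-1}^d=\{0\}$; since $\dim\CU_n^d$ and $\dim\Pi_{n-1}^d$ add up to $\dim\Pi_n^d$, part (ii) reduces to the single orthogonality statement $\CU_n^d(W_{\g,-\one_k})\perp\Pi_{n-1}^d$. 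The other standing tool is Lemma~\ref{lem:H_n^d}, by which each block $H_n^d(W_{\g,\dots})$ restricts on its face to a full space of orthogonal polynomials of degree $n$.

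For (i), positive semidefiniteness is immediate, since every term of $\la f,f\ra_{\g,-\one_k}$ is either a nonnegative weight times a square or a point evaluation with a nonnegative coefficient. Definiteness I would prove by peeling the terms from the top down. The leading integral forces the top mixed derivative $\frac{\partial^{k-1}f}{\partial x^{\mb_k}}$ to vanish on the interior, because the weight $(1-|x|)^{|\mb_k|-1}\xb_{d-k+1}^\g$ is strictly positive there; the intermediate face integrals then force each $\frac{\partial^{|I|}f}{\partial x^I}$ to vanish on its face $T_{I^c}$; the $\sum_i\l_i$ terms force the first-order derivatives to vanish on $T_{\mb_k}$; and the final term (a face integral, or, when all $\g_i=-1$, vertex evaluations with at least one positive weight) pins the surviving constant to zero. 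Assembling these nested vanishing conditions to conclude $f\equiv0$ is the one delicate point of (i).

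For (iii), which also exhibits the mechanism for (ii), I would compute $\la x_{d+1}R,H\ra_{\g,-1}$ directly for $R\in\CV_{n-1}^d(W_{\g,1})$ and $H\in H_n^d(W_{\g,0})\subset\CV_n^d(W_{\g,0})$. The boundary term of $\la\cdot,\cdot\ra_{\g,-1}$ sits on the face $x_{d+1}=0$, where $x_{d+1}R$ vanishes, so it drops out. In the Dirichlet term $\sum_i\int_{T^d}x_i\,\partial_i(x_{d+1}R)\,\partial_iH\,x^\g\,dx$ I would integrate by parts to move the derivative off $H$; this produces a bulk term $-\int_{T^d}H\,(Mu)\,x^\g\,dx$ with $u=x_{d+1}R$ and $Mu:=\sum_i x_i\partial_i^2u+\sum_i(\g_i+1)\partial_iu\in\Pi_{n-1}^d$, plus a boundary term on $\{|x|=1\}$. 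The bulk term vanishes because $H\in\CV_n^d(W_{\g,0})$ is orthogonal to $\Pi_{n-1}^d$ for the weight $x^\g$; the boundary term, using $\sum_i x_i\partial_iu=-R$ on $\{|x|=1\}$, becomes a face integral of $HR$ that vanishes because Lemma~\ref{lem:H_n^d} makes $H$ restrict there to an element of $\CV_n^{d-1}$ while $R$ restricts to degree $\le n-1$.

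For (ii) I would fix $Q\in\Pi_{n-1}^d$ and test it against each block of Theorem~\ref{thm:main_d>2_1}, always moving the derivatives off the block. For the full-product block $x_{d-k+2}\cdots x_{d+1}\CV_{n-k}^d(W_{\g,\one_k})$ the bulk reduces to classical orthogonality against $W_{\g,\one_k}$, while for the $H$-blocks the bulk reduces to orthogonality on the corresponding face via Lemma~\ref{lem:H_n^d}, exactly as in (iii); the boundary terms either vanish because the prefactor $x_{j,k}(\s)$ vanishes on the active face, or combine with the lower-order terms of $\la\cdot,\cdot\ra_{\g,-\one_k}$ and again reduce to an orthogonality relation that vanishes. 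For general $k$ one iterates this on the top $(k-1)$-fold mixed-derivative term: each integration by parts lowers the derivative order by one, its bulk vanishes by a classical orthogonality, and its boundary terms telescope into the next lower term of the inner product, the cancellation being localised by the prefactors $x_{j,k}(\s)$. The main obstacle is precisely this bookkeeping: verifying, face by face and within the permutation sum $\sum_{\s\in\mathcal G_k}$, that the boundary terms generated at each of the $k-1$ steps match the intermediate and vertex terms all the way down. This is best organised as an induction on $k$ (equivalently on the codimension of the active face); conceptually it merely repeats the one-step computation of (iii), but the multi-index and permutation notation make it the heavy part of the proof.
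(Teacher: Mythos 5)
Your global strategy --- reduce (ii) to the single statement $\CU_n^d(W_{\g,-\one_k})\perp\Pi_{n-1}^d$ via the dimension count of Theorem~\ref{thm:main_d>2_1}, and prove (i) by peeling the terms of $\la f,f\ra_{\g,-\one_k}$ from the top derivative down --- is exactly the paper's. Your part (iii), however, is correct but genuinely different from the paper's argument. The paper observes that the elements $g$ of $H_n^d(W_{\g,0})$ are \emph{homogeneous} of degree $n$, so Euler's identity applied twice, combined with the eigenvalue equation $L_{\g,-1}g=-n(n+|\g|+d-1)g$, yields the pointwise identity \eqref{last-eqn}, $\sum_i x_i\partial_i^2 g+\sum_i(\g_i+1)\partial_i g=0$; after one integration by parts the bulk integrand is exactly this expression, so it vanishes identically. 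You integrate by parts in the opposite direction, onto $u=x_{d+1}R$: the bulk term dies by the ordinary orthogonality $H\perp\Pi_{n-1}^d$ inside $\CV_n^d(W_{\g,0})$, and the boundary term on $\{|x|=1\}$, which via $\sum_i x_i\partial_i u=-R$ there becomes a face integral of $RH\,x^\g$, dies by Lemma~\ref{lem:H_n^d}(ii), since $x^\g$ restricted to that face is precisely the weight $\xb_{d-1}^{\bg_{d-1}}(1-|\xb_{d-1}|)^{\g_d}$ for which $H$ restricts to an element of $\CV_n^{d-1}$. Both proofs are valid; yours avoids the homogeneity/Euler observation entirely, which is a genuine simplification of this step.

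The gap is in (ii), and it is not mere bookkeeping. Since the coefficients $\l$, $\l_i$, $\l_I$ are \emph{arbitrary} positive numbers and $\la f,g\ra_{\g,-\one_k}$ is affine in them, orthogonality for all admissible choices forces each bilinear piece $[f,g]_1$, $[f,g]_{2,I}$, $[f,g]_{3,i}$, $[f,g]_4$ to vanish separately. Consequently your proposed mechanism --- boundary terms that ``telescope into the next lower term of the inner product'' and cancel there --- cannot be the proof: whatever boundary terms your integrations by parts create must be killed on their own, which is exactly what the paper does term by term. For the leading block $x_{d-k+2}\cdots x_{d+1}\CV_{n-k}^d(W_{\g,\one_k})$ your scheme does close up, because every boundary term carries either a prefactor $x_\ell$ of $f$ or a factor of $(1-|x|)$ from the weight $(1-|x|)^{|\mb_k|-1}$, and the final bulk term is a classical orthogonality against $W_{\g,\one_k}$. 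But for the $H$-blocks $x_{j,k}(\s)H_{n-j}^d(W_{\g,(\zero_{k-j},\one_j)\s})$ it does not: integrating $[f,g]_1$ by parts in the variables of $\mb_{k,j}=\{d-k+2,\ldots,d-j+1\}$, which are protected by no prefactor of $f$, produces face terms at $x_\ell=0$ with no evident reason to vanish. Here the paper uses tools your proposal never identifies: it integrates by parts only in the protected variables $\mb_{k,j}^c$, computes the remaining derivative of $f$ in closed form from the Rodrigue formula --- obtaining again a Rodrigue polynomial with a \emph{negative-integer} parameter, as in \eqref{equation 6} --- and then invokes Lemma~\ref{orthogo.}, the orthogonality of $P_\nb^{(\g,-\mb)}$ to $\Pi_{|\nb|-|\mb|-1}^d$ with respect to $W_{\g,\zero}$, inside whose proof the unprotected boundary terms are shown to vanish using the special structure of the integrand. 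Without Lemma~\ref{orthogo.} (or an equivalent), the weight and degree counting behind ``the bulk vanishes by a classical orthogonality'' does not go through for these blocks, and these blocks (Cases 2.2 and 2.3 of the paper) are the bulk of the proof of (ii).
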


\begin{rem}
For $ k \ge 2$, the direct sums in the decomposition of $\CU_n^d(W_{\g,-\one_k})$ is in 
general not orthogonal sums. In other word, the elements belong to different parts of the 
decomposition \eqref{V-decom} may not be mutually orthogonal in general. On the 
other hand, the Gram-Schmid process is applicable if an orthonormal basis is desired.
\end{rem}

It should be mentioned that, as stated in Remark \ref{rem:3.2}, it is possible to add more
terms or higher order derivatives in the definition of $\la f, g \ra_{\g, -\one_k}$ but we 
strike to keep the number minimal.

\section{Lemmas and Proof of Theorems}
\setcounter{equation}{0}

In the first subsection we prove several lemmas that will be needed for the proof of Theorem \ref{thm:main_d>2_1}. 
The proofs of the main results are given in the subsequent sections. 

\subsection{Lemmas on Rodrigue basis}

Our first two lemmas are analogues of \eqref{Jacobi-negative} of one variable with one of the element 
in $\bg$ being $-1$.

\begin{lem}\label{lem:d1}
Let $\g \in \RR^d$ and $\g_{d+1}=-1$. Then, for $n_i \ge 1$, $1 \le i \le d$, 
\begin{align*}
P_{\nb} ^ {\left (\g,-1\right)} (x)=(1-|x|) \sum_{i=1}^{d} \frac{n_i (\g_i+n_i)} {|\nb|} P_{\nb-e_i}^ {\left (\g,1\right)}(x).
\end{align*}
\end{lem}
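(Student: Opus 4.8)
The plan is to peel off the Rodrigue prefactors and reduce the statement to a pure differential identity. Set $N := |\nb|$. Applying the Rodrigue formula \eqref{Rodrigue} with $\g_{d+1}=-1$ on the left-hand side and with $\g_{d+1}=1$ on the right-hand side, cancelling the common factor $x^{-\g}$, and using that $\frac{\partial^{N-1}}{\partial x^{\nb-e_i}}\bigl[x^{\g+\nb-e_i}(1-|x|)^{N}\bigr] = x^{\g}(1-|x|)\,P_{\nb-e_i}^{(\g,1)}(x)$, the asserted formula is equivalent to
\begin{equation*}
  (1-|x|)\,\frac{\partial^{N}}{\partial x^{\nb}}\bigl[x^{\g+\nb}(1-|x|)^{N-1}\bigr]
   = \sum_{i=1}^{d}\frac{n_i(\g_i+n_i)}{N}\,
      \frac{\partial^{N-1}}{\partial x^{\nb-e_i}}\bigl[x^{\g+\nb-e_i}(1-|x|)^{N}\bigr].
\end{equation*}
Abbreviating $F := x^{\g+\nb}(1-|x|)^{N-1}$, $B_i := x^{\g+\nb-e_i}(1-|x|)^{N}$ and $C := x^{\g+\nb}(1-|x|)^{N} = (1-|x|)F$, I would establish this identity in two steps.

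First, differentiate $C$ once: the product rule gives $\frac{\partial}{\partial x_i}C = (\g_i+n_i)B_i - N F$. Applying $\frac{\partial^{N-1}}{\partial x^{\nb-e_i}}$ and using that the partials commute (so $\frac{\partial^{N-1}}{\partial x^{\nb-e_i}}\frac{\partial}{\partial x_i} = \frac{\partial^{N}}{\partial x^{\nb}}$) turns this, for each $i$, into $\frac{\partial^{N}}{\partial x^{\nb}}C = (\g_i+n_i)\frac{\partial^{N-1}}{\partial x^{\nb-e_i}}B_i - N\frac{\partial^{N-1}}{\partial x^{\nb-e_i}}F$. Taking the combination of these $d$ identities with weights $n_i/N$, which sum to $1$ since $\sum_i n_i = N$, gives
\begin{equation*}
  \frac{\partial^{N}}{\partial x^{\nb}}C
   = \sum_{i=1}^{d}\frac{n_i(\g_i+n_i)}{N}\frac{\partial^{N-1}}{\partial x^{\nb-e_i}}B_i
     - \sum_{i=1}^{d} n_i\,\frac{\partial^{N-1}}{\partial x^{\nb-e_i}}F.
\end{equation*}

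Second, move the factor $1-|x|$ through the derivative. Letting $M$ denote multiplication by $1-|x|$, one has $\frac{\partial}{\partial x_i}M - M\frac{\partial}{\partial x_i} = -I$ for each $i$ (since $\frac{\partial}{\partial x_i}(1-|x|) = -1$), and because $-I$ is central a short induction yields the operator identity $\frac{\partial^{N}}{\partial x^{\nb}}M - M\frac{\partial^{N}}{\partial x^{\nb}} = -\sum_{i=1}^{d} n_i\,\frac{\partial^{N-1}}{\partial x^{\nb-e_i}}$. Applying this to $F$ and using $MF = C$ gives $(1-|x|)\frac{\partial^{N}}{\partial x^{\nb}}F = \frac{\partial^{N}}{\partial x^{\nb}}C + \sum_{i=1}^{d} n_i\,\frac{\partial^{N-1}}{\partial x^{\nb-e_i}}F$. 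Substituting the formula for $\frac{\partial^{N}}{\partial x^{\nb}}C$ from the first step, the two copies of $\sum_i n_i\,\frac{\partial^{N-1}}{\partial x^{\nb-e_i}}F$ cancel and the target differential identity remains; reinstating the Rodrigue prefactors then yields the lemma. The step I expect to require the most care is this commutator identity — keeping the sign and the multiplicity $n_i$ correct as $M$ is pushed through the mixed high-order derivative — together with the observation that the weights $n_i/N$ are precisely what forces the leftover terms to cancel; the remainder is routine application of the product rule.
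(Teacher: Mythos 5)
Your proposal is correct and is essentially the paper's own argument, just organized in the reverse order: your second step (the commutator identity pushing $1-|x|$ through $\frac{\partial^{N}}{\partial x^{\nb}}$) is exactly the paper's repeated application of the product rule within the Rodrigue formula, and your first step (computing $\frac{\partial^{N}}{\partial x^{\nb}}C$ as the weighted combination $\frac{1}{N}\sum_{i} n_i\,\frac{\partial^{N-1}}{\partial x^{\nb-e_i}}\frac{\partial}{\partial x_i}C$) is the paper's rewriting of its first term. The same cancellation of the $\sum_i n_i\,\frac{\partial^{N-1}}{\partial x^{\nb-e_i}}F$ terms then finishes both proofs.
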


\begin{proof}
Applying the product rule
\begin{align*}
\left(\frac{d} {dx} \right)^n\left[(1-x)g(x)\right]=(1-x)\frac{d^n}{dx^n}g(x)-n\frac{d^{n-1}}{dx^{n-1}}g(x).
\end{align*}
multiple times within the Rodrigue formula, we obtain 
\begin{align*}
P_{\nb} ^ {\left (\g,-1\right)} (x)=  x^{-\g} \frac{\partial^{|\nb|}} {\partial x^{\nb}} \left[x^{\g + \nb} (1-|x|)^{|\nb|} \right]   
 +x^{-\g} \sum_{i=1}^{d} n_i \frac{\partial^{|\nb|-1}} {\partial x^{\nb-e_i}}\left[x^{\g + \nb} (1-|x|)^{|\nb|-1} \right],
\end{align*}
in which the first term in the right hand can be written as
\begin{align*}
x^{-\g} \frac{\partial^{|\nb|}} {\partial x^{\nb}} \left[x^{\g + \nb} (1-|x|)^{|\nb|} \right]
 &\, = \frac{(n_1+\cdots +n_d)} {|\nb|} x^{-\g}    \frac{\partial^{|\nb|}} {\partial x^{\nb}} \left[x^{\g + \nb} (1-|x|)^{|\nb|} \right]\\
&\, =\frac {x^{-\g}} {|\nb|} \sum_{i=1}^{d} n_i \frac{\partial^{|\nb|-1}} {\partial x^{\nb-e_i}} 
   \left[ \frac {\partial} {\partial x_i} \left(x^{\g + \nb} (1-|x|)^{|\nb|} \right)\right],
\end{align*}
so that, putting the two identities together,  we conclude that 
\begin{align*}
   P_{\nb} ^ {\left (\g,-1\right)} (x) &\,=  \frac {x^{-\g}} {|\nb|} \sum_{i=1}^{d} {n_i (\g_i+n_i)} 
            \frac{\partial^{|\nb|-1}} {\partial x^{\nb-e_i}}\left[x^{\g + \nb-e_i} (1-|x|)^{|\nb|} \right]\\
&\, =(1-|x|) \sum_{i=1}^{d} \frac{n_i (\g_i+n_i)} {|\nb|} P_{\nb-e_i}^ {\left (\g,1\right)}(x),
\end{align*}
which completes the proof.
\end{proof}

\begin{lem}\label{lem:d2}
Assume $\g_i=-1$ for some $i$, $1\le i\le d$. Then for $n_i \ge 1$, 
\begin{align*}
P_{\nb} ^ {(\g,\g_{d+1})} (x)=-(|\nb|+\g_{d+1}) x_i P_{\nb-e_i}^ {(\g+2e_i,\g_{d+1})}(x). 
\end{align*}
\end{lem}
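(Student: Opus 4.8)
The plan is to reduce the $d$-variable statement to a one-variable differentiation identity, in the same spirit in which Lemma~\ref{lem:d1} was reduced to a product rule. Fix the index $i$ with $\g_i=-1$ and substitute into the Rodrigue formula \eqref{Rodrigue}. Because $\g_i=-1$, the prefactor $x^{-\g}$ carries the single factor $x_i$ (its $i$th power is $x_i^{-\g_i}=x_i$), while the bracket carries $x_i^{\g_i+n_i}=x_i^{n_i-1}$. I would split the derivative as $\frac{\partial^{|\nb|}}{\partial x^\nb}=\bigl(\prod_{j\ne i}\partial_j^{n_j}\bigr)\partial_i^{n_i}$, push the outer $x_i$ through the derivatives $\partial_j$ with $j\ne i$ (which commute with $x_i$), and pull out the $x_i$-independent product $B:=\prod_{j\ne i}x_j^{\g_j+n_j}$. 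Freezing the remaining coordinates, so that $1-|x|=s-x_i$ with $s:=1-\sum_{j\ne i}x_j$ held constant and $c:=|\nb|+\g_{d+1}$, everything then rests on a purely one-variable identity in $t=x_i$.

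The heart of the matter is the identity
\[
  t\,\frac{d^m}{dt^m}\bigl[t^{m-1}(s-t)^c\bigr]
   = -\,c\,\frac{d^{m-1}}{dt^{m-1}}\bigl[t^{m}(s-t)^{c-1}\bigr],\qquad m\ge 1,
\]
valid for arbitrary $c$ (here $m=n_i$). I expect to avoid a full Leibniz expansion and instead argue in two steps. Writing $-c(s-t)^{c-1}=\frac{d}{dt}(s-t)^c$ and using $t^m\frac{d}{dt}(s-t)^c=\frac{d}{dt}[t^m(s-t)^c]-m\,t^{m-1}(s-t)^c$, the right-hand side becomes $\frac{d^m}{dt^m}[t^m(s-t)^c]-m\frac{d^{m-1}}{dt^{m-1}}[t^{m-1}(s-t)^c]$. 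On the other hand, the Leibniz rule $\frac{d^m}{dt^m}[t\,g]=t\,g^{(m)}+m\,g^{(m-1)}$ applied to $g=t^{m-1}(s-t)^c$ gives $\frac{d^m}{dt^m}[t^m(s-t)^c]=t\frac{d^m}{dt^m}[t^{m-1}(s-t)^c]+m\frac{d^{m-1}}{dt^{m-1}}[t^{m-1}(s-t)^c]$. Subtracting the two displays yields the identity.

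Finally I would reassemble. Applying the identity inside the bracket turns $x_i\,\partial_i^{n_i}\bigl[x_i^{n_i-1}B(s-x_i)^c\bigr]$ into $-(|\nb|+\g_{d+1})\,\partial_i^{n_i-1}\bigl[x_i^{n_i}B(s-x_i)^{c-1}\bigr]$; reinstating the variables $x_j$ and recombining $\prod_{j\ne i}\partial_j^{n_j}$ with $\partial_i^{n_i-1}$ into $\frac{\partial^{|\nb|-1}}{\partial x^{\nb-e_i}}$, the bracket equals $-(|\nb|+\g_{d+1})\frac{\partial^{|\nb|-1}}{\partial x^{\nb-e_i}}\bigl[x^{\g+\nb+e_i}(1-|x|)^{\g_{d+1}+|\nb|-1}\bigr]$. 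Reading this against the Rodrigue formula for $P_{\nb-e_i}^{(\g+2e_i,\g_{d+1})}$ — and checking the exponents $\g_i+n_i+1=n_i$ in the bracket and $-\g_i-2=-1$ in its prefactor — identifies it with $-(|\nb|+\g_{d+1})\,x_i\,P_{\nb-e_i}^{(\g+2e_i,\g_{d+1})}(x)$, which is the claim. The only real obstacle is locating the clean one-variable identity above; once it is in hand, the rest is bookkeeping of the exponents together with the routine commutation of $x_i$ past the derivatives $\partial_j$, $j\ne i$.
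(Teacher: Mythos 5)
Your proposal is correct and follows essentially the same route as the paper's own proof: both reduce the claim to the single variable $x_i$ (the paper takes $i=d$ without loss of generality) and rest on the same two ingredients, namely the Leibniz rule $\frac{d^m}{dt^m}[t\,g]=t\,g^{(m)}+m\,g^{(m-1)}$ for the linear factor coming from the prefactor $x_i^{-\g_i}=x_i$, and a single split-off derivative of $t^m(s-t)^c$ producing the cancellation of the $m$-terms and the factor $-c=-(|\nb|+\g_{d+1})$. Your packaging of this as a standalone one-variable identity is a presentational difference only; the exponent bookkeeping and the identification with $x_i P_{\nb-e_i}^{(\g+2e_i,\g_{d+1})}$ match the paper's computation exactly.
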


\begin{proof}
Without loss of generality, we can assume $i =d$. Setting $\g:=(\bg_{d-1},-1)$ and
$x=(\xb_{d-1}, x_d)$, and applying the product rule
\begin{align*}
\left(\frac{d} {dx} \right)^n\left[xg(x)\right]=x\frac{d^n}{dx^n}g(x)+n\frac{d^{n-1}}{dx^{n-1}}g(x),
\end{align*}
multiple time with the Rodrigue formula, we obtain 
\begin{align*}
P_{\nb} ^ {\left (\g,\g_{d+1}\right)} (x) = &\, \xb_{d-1}^{-\bg_{d-1}} (1-|x|)^{-\g_{d+1}}\frac{\partial^{|\nb|}} {\partial x^{\nb}} \left[x^{\g+ \nb+e_d} (1-|x|)^{|\nb|+\g_{d+1}} \right]\\
&\, -n_d \xb_{d-1}^{-\bg_{d-1}} (1-|x|)^{-\g_{d+1}}\frac{\partial^{|\nb|-1}} {\partial x^{\nb-e_d}} \left[x^{\g + \nb} (1-|x|)^{|\nb|+\g_{d+1}} \right].
\end{align*}
Splitting the partial derivatives in the first term as  $\frac{\partial^{|\nb|}} {\partial x^{\nb}}
  = \frac{\partial^{|\nb|-1}} {\partial x^{\nb-e_d}} \frac{\partial} {\partial x_d}$, a quick computations
establishes the stated identity. 
\end{proof}

By repeatedly applying the above lemmas, we can deduce a relation when more than one $\g_i$ equal
to $-1$. We state the results in the following corollaries. 

\begin{cor}\label{cor:d0}
Let  $\g\in\RR^{d+1-k}$. For  $ n_{d-k+2} \ge 1, \ldots, n_d \ge 1$, 
\begin{align*}
P_{\nb}^{(\g,-\one_{k-1},\g_{d+1})} (x)= & (-1)^{k-1}  x_{d+2 -k} \cdots x_{d} \\
     & \times  \prod_{j=1}^{k-1}(|\nb|+\g_{d+1}-j+1) P_{\nb-(\zero, \one_{k-1})}^{(\g,\one_{k-1},\g_{d+1})} (x).
\end{align*}
In particular, it follows that 
$$
x_{d+2 -k} \cdots x_{d}\CV_{n-k+1}^d(W_{\g, \one_{k-1},\g_{d+1}})\subset \CV_{n}^d(W_{\g, -\one_{k-1},\g_{d+1}}).
$$
\end{cor}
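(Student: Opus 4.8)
The plan is to establish the displayed identity by iterating Lemma \ref{lem:d2} exactly $k-1$ times, peeling off one singular parameter at each step, and then to read off the inclusion as an immediate corollary. I would organize the iteration as an induction on $k$, keeping track of a single invariant: after the first few applications, the parameter vector is $(\g,-\one_{r},\one_{k-1-r},\g_{d+1})$ for some $r$, with the still-singular parameters occupying positions $d-k+2,\ldots,d-k+1+r$ and their index components left untouched.

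First I would apply Lemma \ref{lem:d2} with $i=d$. Since the $d$-th parameter equals $-1$ and $n_d\ge1$, the lemma gives
$$
P_{\nb}^{(\g,-\one_{k-1},\g_{d+1})}(x)=-(|\nb|+\g_{d+1})\,x_d\,P_{\nb-e_d}^{(\g,-\one_{k-2},1,\g_{d+1})}(x),
$$
where the parameter at position $d$ has been converted from $-1$ to $1$ and the total degree of the index has dropped by one. Crucially, this operation alters neither the parameters nor the index components at positions $d-k+2,\ldots,d-1$, so the hypotheses of Lemma \ref{lem:d2} remain valid at the next index. Applying the lemma successively at $i=d-1,d-2,\ldots,d-k+2$, I observe that at the $j$-th step the current multi-index has total degree $|\nb|-(j-1)$, so the scalar produced is $-(|\nb|+\g_{d+1}-j+1)$ and the extracted monomial factor is $x_{d-j+1}$. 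After all $k-1$ steps the accumulated sign is $(-1)^{k-1}$, the accumulated monomial is $x_{d-k+2}\cdots x_d$, the accumulated scalar is $\prod_{j=1}^{k-1}(|\nb|+\g_{d+1}-j+1)$, the index has been reduced by $e_{d-k+2}+\cdots+e_d=(\zero,\one_{k-1})$, and every parameter at positions $d-k+2,\ldots,d$ has become $1$. This is exactly the asserted identity.

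For the inclusion, fix $n\ge k-1$ so the reduced indices have nonnegative total degree. Since $\g_{d+1}>-1$, all parameters of $W_{\g,\one_{k-1},\g_{d+1}}$ exceed $-1$, and by the Rodrigue-basis lemma $\{P_{\mb}^{(\g,\one_{k-1},\g_{d+1})}:|\mb|=n-k+1\}$ is a basis of $\CV_{n-k+1}^d(W_{\g,\one_{k-1},\g_{d+1}})$. Given $P$ in this space, I write it in this basis; multiplying by $x_{d-k+2}\cdots x_d$ and invoking the identity just proved with $\nb=\mb+(\zero,\one_{k-1})$ — which satisfies $n_{d-k+2},\ldots,n_d\ge1$ and $|\nb|=n$ — expresses $x_{d-k+2}\cdots x_d\,P$ as a linear combination of the degree-$n$ polynomials $P_{\nb}^{(\g,-\one_{k-1},\g_{d+1})}$. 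The scalar $\prod_{j=1}^{k-1}(n+\g_{d+1}-j+1)$ is nonzero, since $n+\g_{d+1}>(k-1)-1\ge j-1$ for each $j\le k-1$, so no term is annihilated. Hence $x_{d-k+2}\cdots x_d\,P\in\CV_n^d(W_{\g,-\one_{k-1},\g_{d+1}})$, which is the stated inclusion.

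The computation is routine once the iteration is set up; the only point requiring genuine care is the multi-index and parameter bookkeeping across the $k-1$ applications. The main thing to verify is that peeling index $d-j+1$ at step $j$ leaves the remaining singular parameters at positions $d-k+2,\ldots,d-j$, together with their index components, untouched, so that Lemma \ref{lem:d2} keeps applying with the correct hypotheses. Casting this as an explicit induction on $k$ makes the invariant transparent and simultaneously delivers the telescoping product of scalars $\prod_{j=1}^{k-1}(|\nb|+\g_{d+1}-j+1)$ in the correct form.
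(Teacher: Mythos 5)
Your proposal is correct and follows essentially the same route the paper intends: the paper states this corollary without a separate proof, as the result of ``repeatedly applying'' Lemma~\ref{lem:d2}, and your iteration at $i=d, d-1,\ldots,d-k+2$ — with the observation that each application leaves the remaining singular parameters and index components untouched, so the hypotheses $\g_i=-1$, $n_i\ge 1$ persist — is exactly that argument, with the signs, monomials, and telescoping scalars $-(|\nb|+\g_{d+1}-j+1)$ tracked correctly. Your treatment of the inclusion (writing an element of $\CV_{n-k+1}^d(W_{\g,\one_{k-1},\g_{d+1}})$ in the Rodrigue basis and inverting the identity, noting that $\prod_{j=1}^{k-1}(n+\g_{d+1}-j+1)\neq 0$ when $\g_{d+1}>-1$ and $n\ge k-1$) supplies a detail the paper leaves implicit.
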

 
Combining Lemmas \ref{lem:d1} and \ref{lem:d2} , we can also deduce the following lemma: 
 
\begin{cor}\label{cor:d1}
Let $\g \in \RR^{d+1-k}$. For $n_{d-k+2} \ge 1,\ldots,n_d \ge 1$, 
\begin{align}\label{eq2}
P_{\nb}^{(\g,-\one_k)}(x)=x_{d+2 -k} \cdots x_{d} (1-|x|)\sum_{i=1}^{d} \l_{n_i, \g_i}
     P_{\nb-e_i-(\zero, \one_{k-1})}^{(\g,\one_k)} (x),
\end{align}
where, for $1 \le i \le d$,
$$
   \l_{n_i,\g_i}=(-1)^{k-1} \frac{n_i(n_i+\g_i)}{|\nb|}\prod_{j=0}^{k-2} (|\nb|-j). 
$$
\end{cor}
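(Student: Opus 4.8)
The plan is to chain together the two reduction formulas already at our disposal. Corollary~\ref{cor:d0} (which is simply the iterated form of Lemma~\ref{lem:d2}) converts the $k-1$ middle singular parameters $\g_{d-k+2}=\cdots=\g_d=-1$ into $+1$'s, and Lemma~\ref{lem:d1} disposes of the remaining singular parameter $\g_{d+1}=-1$. The only genuine work is then to merge the two constant factors produced by these steps into the single coefficient $\l_{n_i,\g_i}$.

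First I would apply Corollary~\ref{cor:d0} with $\g_{d+1}$ specialized to $-1$. Since $n_{d-k+2},\dots,n_d\ge1$ by hypothesis the corollary applies verbatim, and because $|\nb|+\g_{d+1}-j+1=|\nb|-j$ its product collapses, giving
\begin{equation*}
 P_{\nb}^{(\g,-\one_k)}(x)=(-1)^{k-1}x_{d+2-k}\cdots x_d\Big(\prod_{j=1}^{k-1}(|\nb|-j)\Big)P_{\nb'}^{(\g,\one_{k-1},-1)}(x),
\end{equation*}
where I write $\nb':=\nb-(\zero,\one_{k-1})$, so that $|\nb'|=|\nb|-(k-1)$. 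The factor $P_{\nb'}^{(\g,\one_{k-1},-1)}$ has its $(d+1)$-st parameter equal to $-1$ while its first $d$ parameters $(\g,\one_{k-1})$ are all $>-1$, which is exactly the setting of Lemma~\ref{lem:d1}.

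Next I would invoke Lemma~\ref{lem:d1} on $P_{\nb'}^{(\g,\one_{k-1},-1)}$, taking front vector $(\g,\one_{k-1})\in\RR^d$ and degree vector $\nb'$. Writing $\mu_i$ and $n_i'$ for the $i$-th entries of $(\g,\one_{k-1})$ and $\nb'$, this extracts the factor $(1-|x|)$ together with $\sum_{i=1}^{d}\frac{n_i'(\mu_i+n_i')}{|\nb'|}P_{\nb'-e_i}^{(\g,\one_k)}(x)$, after noting $(\g,\one_{k-1},1)=(\g,\one_k)$ and $\nb'-e_i=\nb-e_i-(\zero,\one_{k-1})$. Substituting this into the display above already reproduces the shape claimed in \eqref{eq2}, so it only remains to identify the product of the two constant prefactors with $\l_{n_i,\g_i}$.

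This coefficient check is the one point that needs care, and I would carry it out by splitting the index range. For $1\le i\le d+1-k$ one has $\mu_i=\g_i$ and $n_i'=n_i$; for $d-k+2\le i\le d$ one has $\mu_i=1$, $n_i'=n_i-1$, and the original parameter is $\g_i=-1$. In both cases the uniform identity $n_i'(\mu_i+n_i')=n_i(n_i+\g_i)$ holds, so the combined coefficient equals $(-1)^{k-1}n_i(n_i+\g_i)\frac{\prod_{j=1}^{k-1}(|\nb|-j)}{|\nb|-(k-1)}$. The last factor of the numerator is $|\nb|-(k-1)$, so it telescopes to $(-1)^{k-1}n_i(n_i+\g_i)\prod_{j=1}^{k-2}(|\nb|-j)$, which is exactly $\l_{n_i,\g_i}=(-1)^{k-1}\frac{n_i(n_i+\g_i)}{|\nb|}\prod_{j=0}^{k-2}(|\nb|-j)$ once the $j=0$ factor $|\nb|$ is cancelled. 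Finally, the terms with $n_i=0$ (possible only for $i\le d+1-k$) or with $n_i'=0$ (i.e. $n_i=1$ in the upper range) contribute zero on both sides, so no polynomial with a negative multi-index actually occurs. The main obstacle, as indicated, is purely this bookkeeping of Pochhammer-type products, which becomes routine once the uniform identity $n_i'(\mu_i+n_i')=n_i(n_i+\g_i)$ is observed.
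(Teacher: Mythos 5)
Your proof is correct and is essentially the paper's own argument: the paper states this corollary with no further detail beyond ``combining Lemmas~\ref{lem:d1} and \ref{lem:d2}'', and your chain --- Corollary~\ref{cor:d0} specialized to $\g_{d+1}=-1$ to convert the middle parameters, followed by Lemma~\ref{lem:d1} to extract the factor $(1-|x|)$ --- is exactly that combination. Your coefficient bookkeeping also checks out, in particular the uniform identity $n_i'(\mu_i+n_i')=n_i(n_i+\g_i)$ in both index ranges and the cancellation of the factor $|\nb|-(k-1)$ against $|\nb'|$, which recovers $\l_{n_i,\g_i}$ precisely.
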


For later use, we need to reverse the expression in \eqref{eq2}, which is the content of the
following lemma. 

\begin{lem}\label{cor:d2}
Let $\jb=(j_2,\ldots,j_d)$. Then there exist constant $\mu_\jb$ such that 
\begin{align}\label{eq3}
  x_{d+2 -k} \cdots x_{d} (1-|x|)P_{\nb} ^ {(\g,\one_k)}(x)=\sum_{j_i\le n_i}\mu_\jb 
        P_{(|\nb|-|\jb|+1, \jb)+(\zero,\one_{k-1})}^{(\g,-\one_k)} (x), 
\end{align}
where the sum is over $j_i$ for $i=2,3,\ldots,d$. In particular, for $n \ge k$, 
$$
x_{d+2 -k} \cdots x_{d} (1-|x|)\CV_{n-k}^d(W_{\g, \one_k})\subset \CV_{n}^d(W_{\g, -\one_k}).
$$
\end{lem}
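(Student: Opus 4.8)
The plan is to read \eqref{eq3} as the \emph{inverse} of the relation \eqref{eq2} and to solve the resulting linear system by exploiting a triangular structure in the multi-indices. Since multiplication by the fixed polynomial $x_{d+2-k}\cdots x_d(1-|x|)$ is injective on $\Pi^d$ (the polynomial ring being an integral domain), the identity \eqref{eq3} is equivalent to an identity inside $\CV_{n-k}^d(W_{\g,\one_k})$ once this common factor is cancelled. Concretely, writing $\mb(\jb):=(|\nb|-|\jb|+1,\jb)+(\zero,\one_{k-1})$ for the multi-indices occurring on the right of \eqref{eq3}, Corollary \ref{cor:d1} expresses each $P_{\mb(\jb)}^{(\g,-\one_k)}$ as $x_{d+2-k}\cdots x_d(1-|x|)\,Q_\jb$ with $Q_\jb\in\CV_{n-k}^d(W_{\g,\one_k})$. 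Thus \eqref{eq3} reduces to finding constants $\mu_\jb$ with $P_\nb^{(\g,\one_k)}=\sum_{\jb\le\nb'}\mu_\jb Q_\jb$, where $\nb'=(n_2,\ldots,n_d)$.

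First I would expand each $Q_\jb$ in the Rodrigue basis $\{P_\kb^{(\g,\one_k)}:|\kb|=n-k\}$ of $\CV_{n-k}^d(W_{\g,\one_k})$ furnished by Lemma \ref{d>2 bases}, which is legitimate since $\g_i>-1$ for $1\le i\le d+1-k$. Because each such $\kb$ is determined by its tail $\kb'=(\kappa_2,\ldots,\kappa_d)$ (the first coordinate being $n-k-|\kb'|$), I would index everything by tails. Reading off \eqref{eq2} with total degree $|\mb(\jb)|=|\nb|+k$, the $i=1$ term contributes the tail $\jb$ with coefficient $\l_{|\nb|-|\jb|+1,\g_1}$, while the terms $i\ge 2$ contribute only the strictly smaller tails $\jb-e_i$. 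One must check that whenever a coefficient is nonzero the associated index is genuinely nonnegative; the only delicate case, $j_i=0$ with $d+2-k\le i\le d$, is harmless because there $\g_i=-1$ forces $\l_{1,-1}=0$, so that term drops out. Hence $Q_\jb$ has $\jb$ as its unique tail of maximal total degree.

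Consequently the system $\sum_{\jb\le\nb'}\mu_\jb Q_\jb=P_\nb^{(\g,\one_k)}$ is square, with one unknown and one equation per tail $\kb'\le\nb'$, and it is triangular once the tails are ordered by descending total degree: matching the coefficient of $P_{(n-k-|\nb'|,\,\nb')}^{(\g,\one_k)}$ fixes $\mu_{\nb'}$, and at each lower tail $\kb'$ the corresponding equation determines $\mu_{\kb'}$ in terms of the already computed $\mu_{\kb'+e_i}$. The diagonal entries are $\l_{|\nb|-|\jb|+1,\g_1}=(-1)^{k-1}\frac{(|\nb|-|\jb|+1)(|\nb|-|\jb|+1+\g_1)}{|\nb|+k}\prod_{j=0}^{k-2}(|\nb|+k-j)$, and these are nonzero precisely because $\g_1>-1$ makes $|\nb|-|\jb|+1+\g_1>0$ (the remaining factors being positive). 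Therefore the system is uniquely solvable, the constants $\mu_\jb$ exist, and \eqref{eq3} follows. The concluding inclusion $x_{d+2-k}\cdots x_d(1-|x|)\CV_{n-k}^d(W_{\g,\one_k})\subset\CV_n^d(W_{\g,-\one_k})$ is then immediate, since the right side of \eqref{eq3} is by construction a combination of the degree-$n$ Rodrigue polynomials $P_\mb^{(\g,-\one_k)}$.

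The step I expect to be the main obstacle is the multi-index bookkeeping: verifying that no term of $Q_\jb$ carries a negative index once its coefficient is nonzero, and identifying exactly which tails appear so that the system is genuinely square and triangular. Once the maximal-tail structure is isolated and the nonvanishing of the diagonal coefficients is confirmed (the sole point where $\g_1>-1$ is used), the downward recursion is routine and requires no new orthogonality or integral estimate beyond Corollary \ref{cor:d1}.
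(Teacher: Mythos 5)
Your proposal is correct and is in substance the paper's own argument: both proofs invert the relation \eqref{eq2} of Corollary \ref{cor:d1}, using that its $i=1$ term is the unique term with maximal tail (carrying the coefficient $\l_{|\nb|-|\jb|+1,\g_1}\neq 0$) while the degenerate cases $j_i=0$ drop out because the formula for $\l_{n_i,\g_i}$ vanishes when $n_i=0$ and also when $n_i=1$, $\g_i=-1$. The only difference is packaging: the paper carries out this inversion as a nested induction on the components $n_2,\ldots,n_d$ (with permutation bookkeeping and inductively determined $\mu_\jb$), whereas you solve the same relations in one pass as a square triangular system ordered by descending tail degree, which additionally makes explicit the nondegeneracy of $\g_1$ that the paper uses silently when it divides by $\l_{n_1,\g_1}$.
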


\begin{proof}
To simplify the notation in the proof, let 
$$
X_k :=x_{d-k+2}\cdots x_d (1-|x|), \quad \nb_k:=(n_1,\ldots, n_k) \in \NN_0^k. 
$$
From \eqref{eq2} it follows that 
$$
  P_{(n,\zero_{d-k},\one)}^{(\g,-\one_k)}=X_k \l_{n,\g_1} P_{(n-1,\zero)}^{(\g,\one_k)} (x),
$$
which proves \eqref{eq3} for $\nb=(n,0,\ldots,0)$. Notice that the subindex $\nb$ of $P_\nb^\bg$
is always in $\NN_0^d$, so that we have suppressed the subindex of $\one = \one_{k-1}$ for the polynomial
in the left hand side and the subindex of $\zero = \zero_{d-1}$ for the polynomial in the right hand side. We 
shall keep this convention below. Again by \eqref{eq2}, we have 
$$
P_{(n_1,n_2,\zero_{d-(k+1)},\one)}^{(\g,-\one_k)}=X_k\left[\l_{n_1,\g_1} P_{(n_1-1,n_2,\zero)}^{(\g,\one_k)} (x)
        +\l_{n_2,\g_2} P_{(n_1,n_2-1,\zero)}^{(\g,\one_k)} (x)\right],
$$
which shows by induction on $n_2$ that
$$
  X_k P_{(n_1,n_2,\zero)}^{(\g,\one_k)} (x)=\sum_{j=0}^{n_2}\mu_j 
         P_{(n_1+n_2-j+1,j,\zero_{d-(k+1)},\one)}^{(\g,-\one_k)}(x),
$$
where the $\mu_j$ can be determined inductively, which proves \eqref{eq3} for $\nb=(n_1,n_2,\ldots,0)$. 
Furthermore, the above proof shows, in fact, that
$$
  X_k P_{({(n_1,n_2,\zero_{d-(k+1)})}\s,\zero)}^{(\g,\one_k)} (x)=\sum_{j=0}^{n_2}\mu_j 
    P_{({(n_1+n_2-j+1,j,\zero_{d-(k+1)})}\s,\one)}^{(\g,-\one_k)}(x)
$$
for every $\s \in \mathcal{G}_{d-k+1}$, the permutation group of $(d-k+1)$ elements. In the next step, we 
deduce from \eqref{eq2} that 
\begin{align*}
P_{(n_1,n_2,n_3,\zero_{d-(k+2)},\one)}^{(\g,-\one_k)}(x) 
    = &\, X_k \ [ \l_{n_1,\g_1} P_{(n_1-1,n_2,n_3,\zero)}^{(\g,\one_k)} (x)\\
     &   +\l_{n_2,\g_2} P_{(n_1,n_2-1,n_3,\zero)}^{(\g,\one_k)} (x)
              + \l_{n_3,\g_3} P_{(n_1,n_2,n_3-1,\zero)}^{(\g,\one_k)}(x) ].
\end{align*}
For $n_3=1$, we can use induction on $n_2$ to deduce, from 
$X_k P_{({(n_1,n_2,\zero_{d-(k+1)})}\s,\zero)}^{(\g,\one_k)} (x)$, 
the desired \eqref{eq3} for $X_k P_{(n_1,n_2,1,\zero)}^{(\g,\one_k)} (x)$ and, hence, for 
$X_k P_{({(n_1,n_2,1,\zero_{d-(k+2)})\s},\zero)}^{(\g,\one_k)} (x)$. Then, by induction on $n_3$,
we can deduce the desired \eqref{eq3} for $\nb =(n_1,n_2,n_3,\zero)$. It is now clear how to 
proceed, by induction if necessary, to conclude that \eqref{eq3} holds for $\nb = (\nb_{d-k},\zero)$. 

We can go beyond $d-k$ by using \eqref{eq2} and the fact that $\l_{m,\tau} = 0$ if $m =1$ and 
$\tau =-1$. Indeed,  \eqref{eq2} implies that
$$
   P_{(\nb_{d-k+1},\one)}^{(\bg, - \one_k)}(x) = X_k  \sum_{i=1}^{d-k+1} \l_{n_i,\g_i}
       P_{(\nb_{d-k+1},\zero) -e_i}^{(\bg,  \one_k)}(x),    
$$
which allows us to use the induction on $n_{d-k+1}$ to prove \eqref{eq3} for 
$\nb = (\nb_{d-k+1},\zero)$. Furthermore, the process can obviously be continued, by
induction if necessary, until we reach $\nb =(n_1,\ldots, n_d)$. This completes the proof. 
\end{proof}
 
\subsection{Proof of Theorem \ref{thm:main_d>2_1}}
By  \eqref{V-decom} in the Theorem \ref{thm:main_d>2_1}, the space $\CU_n^d(W_{\g, -\one_k})$ 
consists of three main terms. Since, by Lemma \ref{cor:d2}, the elements in the space 
$x_{d+2-k} \cdots x_{d+1}  \CV_{n-k}^d (W_{\g, \one_k})$ are orthogonal polynomials of degree
$n$ in $\CV_n^d(W_{\g, -\one_k})$, by Remark \ref{remark:g<-1}, they satisfy the equation
\begin{align}\label{diff}
           L_{\g,-\one_{k}} u =  - n(n+ |\g| + d - k) u,
\end{align}
which takes care of the first term. The second term of the decomposition in \eqref{V-decom} is a 
large sum. Since the permutation is acting on $(\zero_{k-j},\one_j)$ and the differential operator
$L_{\g,\zero_{k-j},\one_j} $ is clearly invariant under such permutation, we only need to consider,
for $1 \le j \le k-1$, the cases of $x_{j,k} H_{n-j}^d(W_{\g,\zero_{k-j},\one_j})$ with $x_{j,k}
=x_{d+2-j}\cdots x_{d+1}$ and  $x_{j,k} H_{n-j}^d(W_{\g,\zero_{k-j-1},\one_j,0})$ with 
$x_{j,k}=x_{d+1-j}\cdots x_{d}$. We consider the case of 
$x_{d+2-j}\cdots x_{d+1}H_{n-j}^d(W_{\g,\zero_{k-j},\one_j})$ first. Recall that 
$$
H_{n-j}^d(W_{\g, \zero_{k-j},\one_j})=H_{{n-j},S_{k,j}}^d(W_{\g, \zero_{k-j},\one_j})
\quad\textit{with}\quad S_{k,j}=\{d-k+2,\dots,d-j+1\}.
$$
From Lemma \ref{rem}, since the parameters in $ \{\g_i: i\in S_{k,j}\}$ do not appear 
in the basis of $H_{n-j}^d(W_{\g, \zero_{k-j},\one_j})$, the values of these parameters 
can be arbitrary. As a result, we can write 
\begin{align}\label{equa}
H_{n-j}^d(W_{\g, \zero_{k-j},\one_j})=H_{n-j}^d(W_{\g,-\one_{k-j},\one_j}).
\end{align}
On the other hand, from Lemma \ref{cor:d2}, for $\g \in \RR^{d-k+1}$, we have
$$
x_{d+2 -j} \cdots x_{d} (1-|x|)\CV_{n-j}^d(W_{\g, -\one_{k-j},\one_j})\subset \CV_{n}^d(W_{\g, -\one_k}).
$$
Since, by Lemma \ref{lem:H_n^d}, $H_{n-j}^d(W_{\g, -\one_{k-j},\one_j})\subset 
\CV_{n-j}^d(W_{\g, -\one_{k-j},\one_j})$, together with \eqref{equa} it follows that 
$$
x_{d+2 -j} \cdots x_{d} (1-|x|)H_{n-j}^d(W_{\g, \zero_{k-j},\one_j})\subset \CV_{n}^d(W_{\g, -\one_k}),
$$
so that the elements of $x_{d+2 -j} \cdots x_{d} (1-|x|)H_{n-j}^d(W_{\g, \zero_{k-j},\one_j})$ 
satisfy the equation \eqref{diff}. Secondly, we consider the case of 
$x_{d+1-j}\cdots x_{d}H_{n-j}^d(W_{\g,\zero_{k-j-1},\one_j,0})$. Similar to the first case,  as a 
result of Corollary \ref{cor:d0} and Lemma \ref{rem}, we can write
$$
x_{d+1 -j} \cdots x_{d}H_{n-j}^d(W_{\g,\zero_{k-j-1},\one_j,0})\subset \CV_{n}^d(W_{\g, -\one_k}),
$$
so that these polynomials satisfy the equation \eqref{diff} for $ j=1,2,\ldots,k-1$. 
Finally,  the elements of the third term $H_{n}^d(W_{\g,\zero_k})$ in the decomposition of 
$\CU_n^d(W_\g,  - \one_{k})$ satisfy the equation \eqref{diff} according to Lemma \ref{rem}.  

The same proof applies to the case of $k=d+1$ for $n > 1$, whereas the case 
$\CU_1^d(W_{-\one})$ has been explained right below the statement of  
Theorem \ref{thm:main_d>2_1}. 

It is remain to prove that the $\CU_n^d(W_{\g, -\one_k})$ has full dimension. From 
Lemma \ref{lem:H_n^d}, the restriction of $H_n^d(W_{\g, (\zero_{k-j},\one_j)\s})$ 
on the face $T_{S_{k,j}(\s)}^{d-(k-j)}$ is isomorphic to $\CV_n^{d-(k-j)}(W_{\g, \one_{j}})$, so that 
 \begin{align*}
   \dim  H_{n}^d(W_{\g, (\zero_{k-j},\one_j)\s}) = \binom{n+d-(k-j+1)}{n},
\end{align*}
which implies, together with $\dim  \CV_n^d(W_{\bg})  = \binom{n+d-1}{n}$, that 
\begin{align*}
\dim \CU_n^d(W_{\g,-\one_{k}})=&\, \dim \CV_{n-k}^d (W_{\g, \one_k}) \\
   &\, + \sum_{j=1}^{k-1}\binom{k}{j}\dim  H_{n-j}^d(W_{\g, (\zero_{k-j},\one_j)\s})
      +\dim  H_{n }^d(W_{\g, \zero_{k}}) \\
  = & \,\sum_{j=0}^{k}\binom{k}{j}\binom{n+d-k-1}{n-j}=\binom{n+d-1}{n},
\end{align*}
where the last step uses the well-known combinatorial identity.  \qed

\subsection{Proof of (i) in Theorem \ref{thm:main_d>2_2}}

To show that each bilinear form is an inner product, we only need to show that it preserves 
positivity. Since all coefficients are assumed to be non-negative, we clearly have 
$\la f, f \ra_{\g,-\one_k}\ge 0$ and it remains to show that $\la f, f\ra_{\g,-\one_k}=0$ implies
that $f \equiv 0$. 

Assume that $k=1$ and $\la f, f \ra_{\g,-1}= 0$. Then, by the definition, $\frac{\partial f}{\partial x_i}=0$
for $1 \le i \le d$, which implies that $f$ is a constant. Furthermore,  the last term of $\la f, f\ra_{\g, -1}$
shows then $f \equiv 0$. Thus, $\la f, f \ra_{\g,-1}$ is an inner product. 

Now assume that $2\le k\le d$ and $\la f, f \ra_{\g,-\one_k}=0$. Then each term of in the right hand side 
of  $\la f, f \ra_{\g,-\one_k}$ is zero. There are four main terms. The first term shows 
$\frac{\partial^{k-1}f}{\partial x^{\mb_k}}=0$, which implies that $f$ has the form
$$
   f(x)=\sum_{\substack{ J\subset {\mb}_{k}  \\ |J| =k-2 }} f_{J} (x_J),\qquad x_J\in {T_{J^c}^{d-1}}. 
$$
The second term with $|J| = k -2$ shows $\frac{\partial^{k-2}f}{\partial x^{J}}=0$, which implies that
$f$ has the form
\begin{align*}
f(x)=\sum_{\substack{ J\subset {\mb}_{k}  \\ |J| =k-3 }} f_{J} (x_J),\quad x_J\in {T_{J^c}^{d-2}},
\end{align*}
we can continue this process with $|J| = j$ for $j = k -3, \ldots, 1$ and deduce from the second 
term of $\la f,f\ra_{\g, -\one_k}$ that $f$ has the form  $f(x) = f_1(x_1,\dots,x_{d-k+1})$. Now, 
the third term of $\la f,f\ra_{\g, -\one_k}$ shows that $\frac{\partial f} {\partial x_j}=0$ for $j=1,\ldots,d-k+1$, 
from which it follows then that $f$ is a constant. Finally, the fourth term of $\la f,f\ra_{\g, -\one_k}$ shows 
that $f(x)=0$ if $k < d$ or if $k = d$ and $\l>0$. Thus $\la f, g \ra_{\g,-\one_k}$ is an inner product. The same proof clearly applies to $\la f, g\ra_{-\one}$. \qed

\subsection{Proof of (ii) in Theorem \ref{thm:main_d>2_2}}
We need to show that, for each $k$, the polynomial spaces of $ \CU_n^d(W_{\g,-\one_k})$ 
given in Theorem \ref{thm:main_d>2_1} are the spaces of orthogonal polynomials with respect 
to the inner product  $\la \cdot, \cdot\ra_{\g, -\one_k}$. The proof is involved and will be divided
into several cases. Throughout the proof, we let $g$ be a generic element in $\Pi_{n-1}^d$.

\medskip \noindent
{\bf Case 1. $k=1$}. The inner product $\la f, g\ra_{\g,-1}$ contains two terms, 
$$
  \la f, g\ra_{\g,-1} =  [ f, g]_1 + [f,g]_2,
$$
where 
\begin{align*}
 [f, g ]_1 :=  &\, \sum_{i=1}^{d} \int_{T^d} x_i \frac{\partial f}{\partial x_i} \frac {\partial g}{\partial x_i} x^{\g} dx \\ 
  [f,g]_2 := &\,\l \int_{T^{d-1}_{\{d+1\}}} f(x) g(x) \xb_{d-1}^{\bg_{d-1}} (1-|\xb_{d-1}|)^{\g_d} d \xb_{d-1}.
\end{align*}
By Theorem \ref{thm:main_d>2_1}, the space of $ \CU_n^d(W_{\g,-1})$ 
with $\g_i > -1$, $1 \le i \le d$, is decomposed into two terms as shown in \eqref{V-decom}. 
The proof is divided into two cases accordingly. 
 
\medskip \noindent
{\it Case 1.1}. Let $f(x)=(1-|x|)P_{\nb}^{(\g,1)}(x)$, where $P_{\nb}^{(\g,1)}(x)\in\CV_{n-1}^d(W_{\g,1})$
and $\nb \in \NN_0^d$ and $|\nb|=n-1$. Then the restriction of $f$ on $|x|=1$ is zero so that 
$[f,g]_2= 0$ for all $g$. Furthermore, an integration by parts gives
\begin{align} \label{[f,g]-case1.1}
  [ f, g]_1 = - \sum_{i=1}^{d} \int_{T^d} x^{\g}(1-|x|) P_{\nb}^{(\g,1)}(x)\left\{x_i \frac{\partial ^2g}{\partial {x_i}^2}+(\g_i +1)\frac {\partial g}{\partial x_i}\right\} dx.
\end{align}
Since the term in the curly bracket is a polynomial of degree at most $n-2$, the orthogonality
of  $P_{\nb}^{(\g,1)}(x)\in\CV_{n-1}^d(W_{\g,1})$ implies immediately that $[f,g]_1 =0$, so
that $\la f,g\ra_{\g,1} = 0$ in this case. 

\medskip \noindent
{\it Case 1.2}. Let $f \in H_n^d(W_{\g,0})$. By Lemma \ref{lem:H_n^d}, the restriction of 
$H_n^d(W_{\g,0})$ on the plane $|x|=1$ is exactly $\CV_{n}^{d-1}(W_{\g})$. It follows 
immediately that $[f,g]_2 = 0$ for all $g \in \Pi_{n-1}^d$. Furthermore, an integration by 
parts gives
\begin{align*}
  [f, g]_1 =-\sum_{i=1}^{d} \int_{T^d} x^{\g}f(x)\left\{x_i \frac{\partial ^2g}{\partial {x_i}^2}
      +(\g_i +1)\frac {\partial g}{\partial x_i}\right\} dx.
\end{align*}
Since $f \in \CV_{n}^{d}(W_{\g,0})$ by Lemma \ref{lem:H_n^d}, it follows that $[f,g]_1 =0$,
so that $\la f, g\ra_{\g, -1}=0$. 

By the decomposition of $\CU_n^d(W_{\g,-1})$, the two cases imply that 
$\la f, g\ra_{\g, -1}=0$ for all $f \in \CU_n^d(W_{\g,-1})$ and $g\in \Pi_{n-1}^d$, which
completes the proof in this case. 

\medskip\noindent 
{\bf Case 2}. $2\le k \le d$. The inner product $\la f, g\ra_{\g,-\one_k}$ contains four terms, 
\begin{equation} \label{ipd-decomp}
  \la f, g\ra_{\g,-1} =  [ f, g]_1 + [f,g]_2+ [ f, g]_3 + [f,g]_4
\end{equation}
where 
\begin{align*}
[ f, g]_1 := \,& \int_{T^d} \frac{\partial^{k-1}f}{\partial x^{\mb_k}} \frac{\partial^{k-1}g}
      {\partial x^{\mb_k}} (1- |x|)^{|\mb_k| -1} \xb_{d+1-k}^\g dx, \\ 
[ f, g]_2 := \,&\sum_{i=1}^{k-2} \sum_{\substack{ I\subset {\mb}_{k}  \\ |I| = i}} {\l_I} \int_{T_{I^c}^{d-k+i+1}} 
    \frac{\partial^{i}f}{\partial x^{I}}\frac{\partial^{i}g}{\partial x^{I}}  (1- |x|)^{i-1} \xb_{d+1-k}^\g d\xb_{d-k+1}
      d\xb_I,\\
[ f, g]_3 := \,& \sum_{i=1}^{d-k+1} \l_i \int_{T_{\mb_k}^{d-k+1}} x_i 
        \frac{\partial f} {\partial x_i} \frac{\partial g}{\partial x_i}\xb_{d+1-k}^\g d\xb_{d-k+1},\\
[ f, g]_4 := \,&\l \int_{T^{d-k}_{\mb_{k}^{+}}} f(\xb_{d-k+1},\zero) \ g(\xb_{d-k+1},\zero) \xb_{d-k+1}^{\g}d\xb_{d-k},
\end{align*} 
where $\mb_k=\left\{d-k+2,\ldots, d\right\}$ and $\mb_{k}^{+}=\left\{d-k+2,\ldots, d+1\right\}$.
By Theorem  \ref{thm:main_d>2_1}, the space $\CU_n^d(W_{\g,-\one_k})$, $\g_i > -1$ for 
$1 \le i \le d+1-k$, is decomposed into three pieces as in \eqref{V-decom}, where the second term
is a large sum. The proof is divided into several cases according to this decomposition. 

\medskip\noindent
{\it Case 2.1}. $f$ is an element of $x_{d-k+2} \cdots x_{d+1} \CV_{n-k}^d(W_{\g, \one_k})$, the first
term of \eqref{V-decom}. Let $f(x ) = x_{d-k+2}\cdots x_d x_{d+1}P_{\nb}^{(\g,\one_k)}(x)$, where 
$x_{d+1}=(1-|x|)$ and $P_{\nb}^{(\g,\one_k)}(x)\in\CV_{n-k}^d(W_{\g,\one_k})$ with 
$\nb \in \NN_0^d$ and $|\nb|=n-k$. 

For $I \subset \mb_k$ with $|I| = i$, the variables on indices in $I^c = \mb_k \setminus I$ appear 
as product factors in $\frac{\partial^{i}f}{\partial x^I}$ so that the restriction of $\frac{\partial^{i}f}{\partial x^I}$
on the face $T_{I^c}^{d-k+i+1}$ is zero; consequently, $[f,g]_2 = 0$. For $1 \le i \le d-k+1$, 
the variables $x_{d-k+2},\dots,x_d$ appear as product factors in $\frac{\partial f}{\partial x_i}$, 
so that $\frac{\partial f}{\partial x_i}$ vanishes on the face $T_{\mb_k}^{d-k+1}$; consequently
$[f,g]_3=0$. Furthermore, since $f$ vanishes whenever one of $x_{d-k+2},\dots,x_d, x_{d+1}$ is zero, 
it follows that $[f,g]_4=0$. Thus, it remains to consider $[f,g]_1$. From the Rodrigue formula 
\eqref{Rodrigue} of $P_{\nb}^{(\g,\one_k)}(x)$, it follows that
$$
f(x) = \xb_{d-k+1}^{-\g}\frac{\partial^{|\nb|}} {\partial x^{\nb}}\left\{x^{(\g,\one_{k-1})+\nb}(1-|x|)^{|\nb|+1}\right\}.
$$
Taking derivative of $f$ with respect to variables whose index is in $\mb_k$, we obtain 
\begin{align*}
\frac{\partial^{k-1}f}{\partial x^{\mb_k}}= \frac{\partial^{k-1}f}{\partial x_{d-k+2}\cdots x_d} =&\,
 \xb_{d-k+1}^{-\g}\frac{\partial^{n-1}}{\partial x^{\nb+(\zero,\one_{k-1})}}\left\{x^{(\g,\one_{k-1})+\nb}(1-|x|)^{|\nb|+1}\right\} \\
=&\ (1-|x|)^{-(k-2)}P_{\nb+(\zero,\one_{k-1})}^{(\g,\zero_{k-1},-(k-2))}(x),
\end{align*}
from which we obtain immediately that 
\begin{align*}
   [f,g]_1 &\, =  \int_{T^d} \frac{\partial^{k-1}f}{\partial x^{\mb_k}} \frac{\partial^{k-1}g}{\partial x^{\mb_k}} (1- |x|)^{k -2} \xb_{d+1-k}^\g dx\\
&\,= \int_{T^d} \xb_{d+1-k}^\g P_{\nb+(\zero,\one_{k-1})}^{(\g,\zero_{k-1},-(k-2))}(x) \frac{\partial^{k-1}g} {\partial x^{\mb_k}}dx.
\end{align*}
Since $P_{\nb+(\zero,\one_{k-1})}^{(\g,\zero_{k-1},-(k-2))}(x)$  is a polynomial of degree $(n-1)$
and 
$$
    n-1 > (k-2) + n-k = (k-2)+ \deg \ \frac{\partial^{k-1}g} {\partial x^{\mb_k}} 
$$ 
for $g\in \Pi_{n-1}^{d}$, it follows from Lemma \ref{orthogo.} that $[f,g]_1=0$. This completes the
proof in this case. 

\medskip \noindent
{\it Case 2.2}. $f$ is an element of $x_{j,k}(\s) H_{n-j}^d\left(W_{\g, (\zero_{k-j},\one_j)\s}\right)$, 
where $1 \le j \le k-1$ and $\s \in \mathcal{G}_k$, and $x_{j,k}$ contains the variable $x_{d+1}$. 

First of all, since the inner product ${\la f,g\ra}_{\g,\one_k}$ involves symmetric derivatives 
with respect to variables $\{x_{d-k+2},\dots,x_d\}$, it is sufficient to consider the case that 
$\s = id$. Let $j$ be fixed,  $1\le j \le k-1$. Since $x_{j,k}$ contains the variable $x_{d+1}$, 
we can assume then
$$
 x_{j,k}=x_{d+2-j}\cdots x_d x_{d+1} \quad \hbox{and} \quad f(x) = x_{d+2-j}\cdots x_d x_{d+1}Q_{n-j}(x)
$$
where $Q_{n-j} \in H_{n-j}^d (W_{\g,\zero_{k-j},\one_j})$. From Definition \ref{defn:H_n^d} with $S_j$ 
being the set $S_{k,j}  =\{d-k+2,\dots,d+1-j \}$, we have then
\begin{align*}
  H_{n-j}^d (W_{\g,\zero_{k-j},\one_j})= \{P_\nb^{(\g,\zero_{k-j},\one_j)}(x):   \nb \in \NN_0^d, \,   
   |\nb|=n-j \quad \hbox{with} \, n_i=0, \, i \in S_{k,j}  \big\}.
\end{align*}

To keep the notation compact, we introduce the following notations in the proof. Recall
that for $x \in \RR^d$, $\nb\in\NN_0^d$ and $1 \le i \le d$, $\xb_i = (x_1,\ldots,x_i)$ and $\nb_i=(n_1,\ldots,n_i)$. For $1 \le i \le d$, we set  
$$
      \overleftarrow{\xb}_{i}:=(x_i,\ldots, x_d)\quad\textit{and}\quad \overleftarrow{\nb}_{i}:=(n_i,\ldots, n_d). 
$$
Now, for $1 \le j \le k-1$, we introduce the notation 
\begin{align*}
 \bg_{k,j}:=(\g, \zero_{k-j}, \one_{j-1})\quad\hbox{and}\quad & 
  \nb_{k,j}: =(\nb_{d-k+1},\zero_{k-j}, \overleftarrow{\nb}_{d+2-j}).
\end{align*}
Using these notations, we can write the element $Q_{n-j}  \in H_{n-j}^d(W_{\g,\zero_{k-j},\one_j})$ as
\begin{align*}
Q_{n-j}(x)= x^{-\bg_{k,j}} (1-|x|)^{-1}\frac{\partial^{n-j}} {\partial x^{\nb_{k,j}} } 
       \left\{x^{\bg_{k,j} + \nb_{k,j}} (1-|x|)^{|\nb_{k,j}|+1} \right\},
\end{align*}
where $|\nb_{k,j}|=n-j$. Consequently, our $f$ is of the form 
\begin {align}\label{equation 4}
  f(x)=\xb_{d-k+1}^{-\g}\frac{\partial^{n-j}} {\partial x^{\nb_{k,j}}} 
       \left\{x^{\bg_{k,j} + \nb_{k,j}} (1-|x|)^{|\nb_{k,j}|+1} \right\}.
\end{align}

The expression shows that $f(x) =0$ whenever one of $x_{d+2-j},\dots,x_d, x_{d+1}$ is zero. 
It follows readily that $[f,g]_4=0$, since the indices of $\{ x_{d+2-j},\dots,x_d, x_{d+1}\}$ are 
in $\mb_k^+$ for $1\le j \le k-1$. The variables in $\{x_{d+2-j},\dots,x_d\}$ whose indices are 
in $\mb_k$ appear as product factors in $\frac{\partial f}{\partial x_i}$, so that 
$\frac{\partial f}{\partial x_i}$ vanishes on the face $T_{\mb_k}^{d-k+1}$ except when $j=1$; 
consequently $[f,g]_3=0$ except when $j=1$. 
If $j=1$ then $f(x)=x_{d+1}Q_{n-1}(x)$, where $Q_{n-1}(x) \in H_{n-1}^d (W_{\g,\zero_{k-1},1})$,
so that an integration by parts shows that
\begin{align*}
 [f,g]_3= -\sum_{i=1}^{d-k+1} {\l_i} \int_{T_{\mb_k}^{d-k+1}} \xb_{d+1-k}^\g  f(x)\left\{(\g_i+1)\frac{\partial g}{\partial x_i}+x_i \frac{\partial^2 g}{\partial {x_i}^2}\right\}d\xb_{d-k+1}.
\end{align*}
Since $f=(1-|x|)Q_{n-1}(x)$ and  from Lemma \ref{lem:H_n^d},  $Q_{n-1} \big\vert_{T_{\mb_k}^{d-k+1}} 
\in \CV_{n-1}^{d-k+1} (W_{\g,1})$, it follows that $[f,g]_3=0$ in the case of $j=1$ as the term 
in the curly bracket is a polynomial of degree at most $n-2$. Thus, $[f,g]_3=0$ for $1\le j\le k-1$.

Next we consider $[f,g]_1$. For convenience, we introduce the following 
notation: For a fixed $j$ with $1 \le j \le k-1$, define 
$$
  \mb_{k,j}:=\{d-k+2,\ldots,d-j+1\} \quad \hbox{and} \quad \mb_{k,j}^c:=\{d-j+2,\ldots,d\}
$$
so that $\mb_k =\mb_{k,j}\cup \mb_{k,j}^c$. In particular, if $j=1$, then $\mb_{k,j} =\mb_k$ 
and $\mb_{k,j}^c=\emptyset$. 
If $n < k$, then $|\mb_k| = k-1>n-1$ and $\frac{\partial^{k-1}g}{\partial x^{\mb_k}}=0$, 
so that $[f,g]_1=0$. If $n \ge k$, then $|\mb_k|\le n-1$. Assume first $2 \le j\le k-1$. Then
$|\mb_{k,j}^c| \ge 1$. We perform 
integration by parts $|\mb_{k,j}^c|$ times, once for each of the variables ${x_{d+2-j},\ldots,x_d}$, 
which appear as product factors in $f$, to obtain
\begin{align*}
[f,g]_1  &\, = \int_{T^d}\frac{\partial^{k-1}f}{\partial x^{\mb_k}} \frac{\partial^{k-1}g}{\partial x^{\mb_k}} (1- |x|)^{|\mb_k|-1} \xb_{d+1-k}^\g dx\\
  &\, = (-1)^{|\mb_{k,j}^c|}\int_{T^d} \xb_{d+1-k}^\g \frac{\partial^{|\mb_{k,j}|}f}{\partial x^{\mb_{k,j}}}\frac{\partial^{|\mb_{k,j}^c|}}{\partial x^{\mb_{k,j}^c}}\left\{\frac{\partial^{k-1}g}{\partial x^{\mb_k}} (1- |x|)^{|\mb_k|-1}\right\} dx.
\end{align*}
Using the product rule, for a generic function $h$,
\begin{equation}\label{pro. rule}
\frac{\partial}{\partial x}\left\{h (x)(1-x)^n\right\}=(1-x)^{n-1}\left\{(1-x) h'(x)-n h(x)\right\},
\end{equation}
it is easy to see that we can write 
$$
\frac{\partial^{|\mb_{k,j}^c|}}{\partial x^{\mb_{k,j}^c}}\left\{\frac{\partial^{k-1}g}{\partial x^{\mb_k}} (1- |x|)^{|\mb_k|-1}\right\} =(1- |x|)^{|\mb_{k,j}|-1}h_{n-|\mb_k|-1}(x)
$$
where $h_{n-|\mb_k|-1}$ is a polynomial of degree ${(n-|\mb_k|-1)}$. Consequently, we have
$$
   [f,g]_1=(-1)^{|\mb_{k,j}^c|}\int_{T^d} \xb_{d+1-k}^\g 
\frac{\partial^{|\mb_{k,j}|}f}{\partial x^{\mb_{k,j}}}\left\{(1- |x|)^{|\mb_{k,j}|-1}h_{n-|\mb_k|-1}(x) \right\} dx.
$$
Now, taking derivative of $f$ in \eqref{equation 4} with respect to variables whose indices are 
in $\mb_{k,j}$, we obtain, as these derivatives only apply to the factor $(1-|x|)$ in $f$, that 
\begin{align}\label{equation 6}
 \frac{\partial^{|\mb_{k,j}|}f}{\partial \xb_{k,j}^{\mb_{k,j}}}=c_{k,j} x_{d+2-j}\cdots x_d 
    (1-|x|)^{-(|\mb_{k,j}|-1)} P^{(\g,\zero_{k-j},\one_{j-1},-(|\mb_{k,j}|-1))}_{\nb}(x),
\end{align}
where $c_{k,j}$ is a constant, $\xb_{k,j} = (x_{d-k+2},\ldots, x_{d-j+1})$ and $\nb = \nb_{k,j}$. 
Substituting it into the expression for $[f,g]_1$, we obtain
\begin{align*}
 [f,g]_1= & (-1)^{|\mb_{k,j}^c|} c_{k,j}\int_{T^d} \xb_{d+1-k}^\g x_{d+2-j}\cdots x_d \\
       & \times P^{(\g,\zero_{k-j},\one_{j-1},-(|\mb_{k,j}|-1))}_{\nb}(x)h_{n-|\mb_k|-1}(x)dx.
\end{align*}
Since $|\mb_{k,j}^c|=j-1$ and $|\nb|=n-j$, we can write $|\nb|=n-|\mb_{k,j}^c|-1$. As 
$|\mb_k|=|\mb_{k,j}|+|\mb_{k,j}^c|$, we see that 
$$
\deg  P_\nb^{(\g,\zero_{k-j},\one_{j-1},-(|\mb_{k,j}|-1))} > (|\mb_{k,j}|-1)+ \deg  h_{n-|\mb_k|-1},
$$ 
so that, by Lemma \ref{orthogo.}, $[f,g]_1=0$ for all $g\in \Pi_{n-1}^{d}$ when $2\le j\le k-1$. 
In the case $j=1$, $|\mb_{k,j}^c|=0$ and there is no need for integration by parts in $[f,g]_1$. 
In this case, \eqref{equation 6} becomes
\begin{align*}
 \frac{\partial^{|\mb_k|}f}{\partial x^{\mb_k}}=c_{k,1} (1-|x|)^
{-(|\mb_k|-1)} P^{(\g,\zero_{k-1},-(|\mb_k|-1))}_{\nb}(x)
\end{align*}
and $|\nb|=n-1$, so that $[f,g]_1$ becomes 
\begin{align*}
[f,g]_1&\,=\int_{T^d}\frac{\partial^{k-1}f}{\partial x^{\mb_k}} \frac{\partial^{k-1}g}{\partial x^{\mb_k}} (1- |x|)^{|\mb_k|-1} \xb_{d+1-k}^\g dx \\
&\, = c_{k,1}\int_{T^d} \xb_{d+1-k}^\g P_{\nb}^{(\g,\zero_{k-1},-(|\mb_k|-1))} (x)\frac{\partial^{k-1}g}{\partial x^{\mb_k}}dx. 
\end{align*}
Since $|\nb|>( |\mb_k|-1)+ \deg \ \frac{\partial^{k-1}g}{\partial x^{\mb_k}}$ for $g\in \Pi_{n-1}^{d}$, it 
follows from Lemma \ref{orthogo.} that $[f,g]_1=0$. This completes the proof that $[f,g]_1=0$. 

Next we deal with $[f,g]_2=0$. In the proof, let $I=I_1\cup I_2$, $I_1 \subset \mb_{k,j}$ and $I_2 \subset \mb_{k,j}^c$. Firstly, assume that $I_2$  is a proper subset of $\mb_{k,j}^c$ 
for $2\le j \le k-1$. Then at least one of the variables in $\{x_{d+2-j},\dots,x_d\}$ whose indices 
are in $\mb_{k,j}^c$ appears as a product factor in $\frac{\partial^{i}f}{\partial x^I}$, so that
the function $\frac{\partial^{i}f}{\partial x^I}$ vanishes on the face ${T_{I^c}^{d-k+i+1}}$ and,
consequently, $[f,g]_2=0$. Secondly, assume that $I_2 = \mb_{k,j}^c$. Then 
$|I_2|=|\mb_{k,j}^c|=j-1$ for $1 \le j\le k-1$ and the sum over $i$ in $[f,g]_2$ starts at 
$i = j-1$ if $j > 1$, so that we can write
\begin{align} \label{[f,g]_2}
[f,g]_2 =  \sum_{i= \max \{1,j-1\}}^{k-2} \sum_{\substack{ I_1\subset {\mb_{k,j}}  \\
 {I_2=\mb_{k,j}^c} \\ |I| =i}} {\l_I}  [f,g]_{2,i,I},
\end{align}    
where 
$$
[f,g]_{2,i,I}: = \int_{T_{I^c}^{d-k+i+1}} \frac{\partial^{i}f}{\partial x^{I}}\frac{\partial^{i}g}{\partial x^{I}}
      (1- |x|)^{|I|-1} \xb_{d+1-k}^\g d\xb_{d-k+1} d \xb_I. 
$$

For $j \ge 1$, $|I_2| = j-1 \ge 0$. If $j > 1$, we apply integration by parts $|I_2|$ times over 
the variables ${x_{d+2-j},\ldots,x_d}$, which appear as product factors in $f$ by 
\eqref{equation 4}, to obtain 
\begin{align*}
 [f,g]_{2,i, I}:= &\, \int_{T_{I^c}^{d-k+i+1}} \frac{\partial^{i}f}{\partial x^{I}}\frac{\partial^{i}g}{\partial x^{I}} 
  (1- |x|)^{|I|-1} \xb_{d+1-k}^\g d\xb_{d-k+1} d \xb_I\\
 = &\, (-1)^{|I_2|}\int_{T_{I^c}^{d-k+i+1}} \xb_{d+1-k}^\g \frac{\partial^{|I_1|}f}{\partial x^{I_1}}
    \frac{\partial^{|I_2|}}{\partial x^{I_2}}\left\{ \frac{\partial^{|I|}g}{\partial x^{I}}(1- |x|)^{|I|-1}\right\} 
    d\xb_{d-k+1} d \xb_I
\end{align*}
for $i = |I|\ge j -1$. If $I_1 \ne \emptyset$ then, using the product rule \eqref{pro. rule}, we can write
$$
\frac{\partial^{|I_2|}}{\partial x^{I_2}}\left\{ \frac{\partial^{|I|}g}{\partial x^{I}}(1- |x|)^{|I|-1}\right\}
   =(1- |x|)^{|I_1|-1}h_{n-|I|-1},
$$
where $h_{n-|I|-1}$ is a polynomial of degree $(n-|I|-1)$, we further deduce that 
\begin{align*}
 [f,g]_{2,i,I} =(-1)^{|I_2|}\int_{T_{I^c}^{d-k+i+1}} \xb_{d+1-k}^\g \frac{\partial^{|I_1|}f}{\partial x^{I_1}}\left\{(1- |x|)^{|I_1|-1}h_{n-|I|-1}\right\} d\xb_{d-k+1} d \xb_I,
\end{align*}
which also holds if $j =1$, for which there is no need to take derivatives as $|I_2| =0$. 
Furthermore, similar to \eqref{equation 6}, if we take derivatives of \eqref{equation 4} with 
respect to variables whose indices are in $I_1$, then the derivatives apply only on the factor
$(1-|x|)$, so that 
\begin{align}\label{equation 7}
  \frac{\partial^{|I_1|}f}{\partial x^{I_1}}= d_{k,j} x_{d+2-j}\cdots x_d(1-|x|)^{-(|I_1|-1)}
      P^{(\g,\zero_{k-j}, \one_{j-1},-(|I_1|-1))}_{\nb}(x),
\end{align}
where $\nb=\nb_{k,j}$ and $d_{k,j}$ is a constant, which allows us to write 
\begin{align*}
 [f,g]_{2,i,I} = &\,(-1)^{|I_2|}d_{k,j}\int_{T_{I^c}^{d-k+i+1}} \xb_{d+1-k}^\g x_{d+2-j}\cdots x_d  \\
    &\times P^{(\g,\zero_{k-j},\one_{j-1},-(|I_1|-1))}_{\nb} (x) h_{n-|I|-1} (x)d\xb_{d-k+1} d \xb_I.
\end{align*}
By  Lemma \ref{lem:H_n^d}, the restriction of $P^{(\g,\zero_{k-j},\one_{j-1},-(|I_1|-1))}_{\nb} (x)$ 
on the face ${T_{I^c}^{d-k+i+1}}$ is the polynomial 
$P^{(\g,\zero_{i-j+1},\one_{j-1},-(|I_1|-1))}_{\nb}(\xb_{d-k+1},\xb_I)$ in the $(d-k+i+1)$-variables 
of $\xb_{d-k+1}$ and $\xb_I$ on ${T_{I^c}^{d-k+i+1}}$. Since $|I_2| = j-1$ 
and $|\nb| = n-j$, we have $|\nb|> (|I_1|-1)+ \deg h$ and, consequently, by Lemma \ref{orthogo.}, 
that $[f,g]_{2,i,I} =0$. As a result, we have shown that the first term of $[f,g]_2$ in 
\eqref{[f,g]_2} is zero for $1 \le j \le k-2$ and $I_1 \ne \emptyset$.  

If, however, $I_1 = \emptyset$, then $I=I_2 = \mb_{k,j}^c$, so that $i = |I| = j -1$ and, in particular,
as $i \ge 1$, $j \ge 2$. We apply integration by parts $(|I|-1)$ times over the variables 
${x_{d+3-j},\ldots,x_d}$ to obtain
\begin{align*}
 [f,g]_{2,j-1,I} =  
  (-1)^{|I|-1}\int_{T_{I^c}^{d-(k-j)}} \xb_{d+1-k}^\g \frac{\partial f}{\partial x_{d+2-j}}h_{n-|I|-1}
      d\xb_{d-k+1} d\xb_I,
\end{align*}
where $h_{n-|I|-1}$ is the polynomial of degree $(n-|I|-1)$ given by
$$
h_{n-|I |-1}=\frac{\partial^{|I|-1}}{\partial x_{d+3-j}\cdots \partial x_d}
     \left\{ \frac{\partial^{|I|}g}{\partial x^{I}}(1- |x|)^{|I|-1}\right\}.
$$
Now, if $j \ge 3$, taking derivative of \eqref{equation 4} with respect to $x_{d+2-j}$ gives 
\begin{align}\label{equation 9}
\frac{\partial f}{\partial x_{d+2-j}}=x_{d+3-j}\cdots x_d P_{\nb}^{(\g,\zero_{k-j+1},\one_{j-2},0)}(x),
\end{align}
where $\nb=\nb_{k,j}+e_{d+2-j}$ and $|\nb|=n-|I|$, from which follows immediately that 
\begin{align*}
 [f,g]_{2,j-1,I} =  & \, (-1)^{|I|-1}\int_{T_{I^c}^{d-(k-j)}} \xb_{d+1-k}^\g x_{d+3-j}\cdots x_d  \\
   & \times P_{\nb}^{(\g,\zero_{k-j+1},\one_{j-2},0)} (x)h_{n-|I|-1} (x)    d\xb_{d-k+1}d \xb_I. 
\end{align*}
Since $|\nb| = n -|I|$ and, by Lemma \ref{lem:H_n^d}, $P_{\nb}^{(\g,\zero_{k-j+1},\one_{j-2},0)}(x) 
\big\vert_{T_{I^c}^{d-(k-j)}}$ is an element of $\CV_{n-|I|}^{d-(k-j)}(W_{\g,0,\one_{j-2},0})$ in
the variables $(\xb_{d-k+1},\xb_I)$, it follows from the orthogonality of 
$\CV_{n-|I|}^{d-(k-j)}(W_{\g,0,\one_{j-2},0})$ that $[f,g]_{2,j-1,I} = 0$ for $j \ge 3$. If, however,
$j=2$, then $I_2=\{d\}$ and  $[f,g]_{2,j-1,I}$ for $j =2$ becomes 
\begin{align*}
 [f,g]_{2,1,I} = \int_{T_{\{d-k+2,\dots,d-1\}}^{d-k+2}} \xb_{d+1-k}^\g
     \frac{\partial f}{\partial x_d} \frac{\partial g}{\partial x_d}d\xb_{d-k+1} dx_d.
\end{align*}
By \eqref{equation 9} with $j=2$, $\frac{\partial f}{\partial x_{d}}=P_{\nb}^{(\g,\zero_{k-1},0)}(x)$, 
which, by Lemma \ref{lem:H_n^d}, is an element in $\CV_{n-1}^{d-k+2}(W_{\g,0,0})$ when 
restricted to the face $T_{\{d-k+2,\dots,d-1\}}^{d-k+2}$. Consequently, since $|\nb | =n-1$
in this case, it follows from the orthogonality of $\CV_{n-1}^{d-k+2}(W_{\g,0,0})$ that
$ [f,g]_{2,1,I} =0$. Thus, we have establish that $[f,g]_{2,i,I} =0$ whenever $j-1 \le i \le k-2$ 
and $j \ge 2$. 

Putting these together, we have proved that $\la f,g\ra_{\g, - \one_k} =0$ for all $g \in \Pi_{n-1}^d$. Consequently, the proof of Case 2.2 is completed. 

\medskip \noindent
{\it Case 2.3}. $f$ is an element of $x_{j,k}(\s) H_{n-j}^d\left(W_{\g, (\zero_{k-j},\one_j)\s}\right)$, 
where $1 \le j \le k-1$ and $\s \in \mathcal{G}_k$, and $x_{j,k}$ does not contain the variable $x_{d+1}$. 

As in the Case 2.2, it is sufficient to consider $\s = id$. Since $x_{j,k}$ does not contain $x_{d+1}$,
we can assume that 
$$
   x_{j,k}=x_{d+2-j}\cdots x_d \quad\hbox{and} \quad f(x)=x_{d+2-j}\cdots x_d Q_{n-j+1}(x)
$$ 
for $2\le j \le k$, where $Q_{n-j+1} \in H_{n-j+1}^d (W_{\g,\zero_{k-j},\one_{j-1},0})$. 
From Definition \ref{defn:H_n^d}, $x_{S_{k-j+1}}=\{1-|x|,x_{d-k+2},\ldots,x_{d-j+1}\}$ and then
\begin{align*}
H_{n}^d (W_{\g,\zero_{k-j},\one_{j-1},0}) = &
  \left\{P_{\nb}^{(\zero_{k-j+1},\one_{j-1},\g)}(x_{S_{k-j+1}}, \overleftarrow{\xb}_{d-j+2},\xb_{d-k}): \right. \\
    & \left. \quad \nb=(\zero,{\nb}_{k,j}), \, |\nb| = n, \, \nb_{k,j}\in \NN_0^{d-k+j-1}  \right \}.
\end{align*}
 From Definition \ref{defn: P(x_S)}, we can explicitly write the Rodrigue basis of this space. To keep the notation compact, we use following notations in this case of the proof, 
\begin{align*} 
\xb_{k,j}:=  (\overleftarrow{\xb}_{d-j+2},\xb_{d-k}) \quad \hbox{and}
     \quad  \bg_{k,j}:=(\one_{j-1},\bg_{d-k}).
\end{align*} 
In case 2 of Definition \ref{defn: P(x_S)}, taking
\begin{align*}
&  x_{i_2} =x_{d-k+2},\ldots, x_{i_k}=x_d, \quad x_{i_{k+1}}=x_1,\dots, x_{i_{d+1}}=x_{d-k+1} \\
& \nb=(\nb_{k-j+1}, \nb_{k,j})\quad \textit {with} \quad \nb_{k-j+1}=(n_1,\ldots,n_{k-j+1}), 
 \quad \nb_{k,j}\in \NN_0^{d-k+j-1},
\end{align*}
then we can write 
\begin{align*}
 & P_{\nb}^{(\zero_{k-j+1},\bg_{k,j},\g_{d-k+1})}(x_{S_{k-j+1}},\xb_{k,j})   \\
  & \qquad =(-1)^{n_1}\xb_{k,j}^{- \bg_{k,j}} x_{d-k+1}^{-\g_{d-k+1}} 
     \frac{\partial^{|\nb|}}{\partial \ub_{\kb,\jb}^\nb} \left \{
     x_{S_{k-j+1}}^{\nb_{k-j+1}}\xb_{k,j}^{ \bg_{k,j} + \nb_{k,j}} x_{d-k+1}^{\g_{d-k+1}+|\nb|}\right\},
\end{align*}
where $\partial \ub_{\kb,\jb} = (\partial_{d-k+1}, \partial_{d-k+2,d-k+1},\dots,\partial_{d,d-k+1},
 \partial_{1,d-k+1}, \ldots, \partial_{d-k,d-k+1})$ and recall that 
$\partial_{i,j}=\partial_i-\partial_j$. Since the first $(k-j+1)$ components of $\nb$ are zero, 
 in the definition of $H_{n}^d (W_{\g,\zero_{k-j},\one_{j-1},0})$, we can write $Q_{n-j+1} \in 
 H_{n-j+1}^d (W_{\g,\zero_{k-j},\one_{j-1},0})$ as 
\begin{align*}
 Q_{n-j+1}(x)= \xb_{k,j}^{-\bg_{k,j}} x_{d-k+1}^{-\g_{d-k+1}} 
 \frac{\partial^{|\nb_{k,j}|}}{\partial \yb_{\kb,\jb}^{\nb_{k,j}}} \left \{
     \xb_{k,j}^{ \bg_{k,j} + \nb_{k,j}} x_{d-k+1}^{\g_{d-k+1}+|\nb_{k,j}|}  \right\},
\end{align*}
where $\partial \yb_{\kb,\jb} = (\partial_{d-j+2,d-k+1},\dots,\partial_{d,d-k+1},
 \partial_{1,d-k+1}, \ldots, \partial_{d-k,d-k+1})$. 
Hence, we conclude that   
\begin{align}\label{equation 11}
  f(x)=\xb_{d-k+1}^{-\bg_{d-k+1}} \frac{\partial^{n-j+1}} {\partial \yb_{\kb,\jb}^{\nb_{k,j}}} 
    \left\{\xb_{k,j}^{\bg_{k,j}+\nb_{k,j}} x_{d-k+1}^{|\nb_{k,j}|+\g_{d-k+1}} \right\},
\end{align}
where $|\nb_{k,j} | = n-j+1$. 

This expression of $f$ shows that $f(x) =0$ whenever one of $x_{d+2-j},\dots,x_d$ is zero. 
It follows readily that $[f,g]_4=0$, since  the indices of $\{ x_{d+2-j},\dots,x_d\}$ are among the variables whose indices are in $\mb_k^+$ for $2\le j \le k$, so that the restriction of $f$ on the face $T^{d-k}_{\mb_k^+}$ 
is zero. Furthermore, $[f,g]_3=0$ for the same reason. Since the variables in 
$\{x_{d-k+2},\ldots,x_{d-j+1}\}$, $2\le j \le k-1$, do not appear in $f$, 
$\frac{\partial^{|\mb_{k}|}f}{\partial x^{\mb_{k}}}=0$, so that $[f,g]_1=0$ except for $j=k$. We 
now consider the case of $j=k$. In this case,  by \eqref{equation 11} with $j=k$, 
\begin{align}\label{equ. j=k}
f(x)=\xb_{d-k+1}^{-\bg_{d-k+1}} \frac{\partial^{n-k+1}} {\partial \yb_{\kb,\kb}^{\nb_{k,k}}} 
    \left\{\xb_{k,k}^{\bg_{k,k}+ \nb_{k,k}} x_{d-k+1}^{|\nb_{k,k}|+\g_{d-k+1}} \right\},
\end{align}
where $|\nb_{k,k}|=n-k+1$.
Applying integration by parts $|\mb_k|-1$ times over the variables ${x_{d-k+3},\ldots,x_d}$,
which are product factors in $f$, we obtain
\begin{align*}
[f,g]_1  =(-1)^{|\mb_k|-1} \int_{T^d} \xb_{d+1-k}^\g \frac{\partial f}{\partial x_{d-k+2}} 
       \frac{\partial^{|\mb_k|-1}} {\partial \xb_{d-k+3}}\left\{\frac{\partial^{d-k}g}{\partial x^{\mb_k}} 
      (1- |x|)^{|\mb_k|-1}\right\} dx
\end{align*}
where $\frac{\partial^{|\mb_k|-1}} {\partial \xb_{d-k+3}}=\frac{\partial^{|\mb_k|-1}} {\partial x_{d-k+3}\cdots \partial x_d}$.
Taking derivative of $f$ in \eqref{equ. j=k} with respect to $x_{d-k+2}$, we obtain 
$$
\frac{\partial f}{\partial x_{d-k+2}}= c\, x_{d-k+3}\cdots x_d P^{(0,0,\one_{k-2},\g)}_{\nb}
   (x_{S_{k-j+1}},\xb_{k,j}),
$$
where $c$ is a constant and $\nb=(\zero,\nb_{k,k})$. Since $\frac{\partial^{|\mb_k|-1}} {\partial \xb_{d-k+3}}\left\{\frac{\partial^{d-k}g}{\partial x^{\mb_k}}
(1- |x|)^{|\mb_k|-1}\right\} $ is a polynomial of degree $n-|\mb_k|-1$ and  $|\nb|= n-k+1 = n-|\mb_k|$,
we deduce that $[f,g]_1=0$ form the fact that $P^{(0,0,\one_{k-2},\g)}_{\nb} \in 
 \CV_{n-|\mb_k|}^{d}(W_{(\g,0,\one_{k-2},0)})$. Thus, $[f,g]_1=0$ for $2\le j\le k$.

It remains to consider $[f,g]_2$. Firstly, assume that $I_2$ is a proper subset of $\mb_{k,j}^c$ 
for $2\le j \le k$.  Since at least one of the variables in $\{x_{d+2-j},\dots,x_d\}$ whose indices 
are in $\mb_{k,j}^c$ appears as a product factor in $\frac{\partial^{i}f}{\partial x^I}$, the function 
$\frac{\partial^{i}f}{\partial x^I}$ vanishes on the face ${T_{I^c}^{d-k+i+1}}$ so that $[f,g]_2=0$. 
Secondly, assume that $I_2=\mb_{k,j}^c=\{d-j+2,\ldots,d\}$. Then $|I_2|=|\mb_{k,j}^c|=j-1$ 
for $2\le j\le k-1$.  We do not need to consider the case $j=k$ when $I_2=\mb_{k,j}^c$, since
if $j=k$ then $\mb_{k,j}=\emptyset$ and $\mb_k=\mb_{k,j}^c=I_2$, so that $I=\mb_k$, which
contradicts to the fact that $I$ is a proper subset of $\mb_k$ in $[f,g]_2$. Thus, we only need
to consider $2\le j\le k-1$, for which the summation over $i$ in $[f,g]_2$ starts at $i = j-1$,
so that we can write 
\begin{align} \label{[f,g]_2-2.3}
[f,g]_2 =  \sum_{i= j-1}^{k-2} \sum_{\substack{ I_1\subset {\mb_{k,j}}  \\
 {I_2=\mb_{k,j}^c} \\ |I| =i}} {\l_I}  [f,g]_{2,i,I},
\end{align}    
where $[f,g]_{2,i,I}$ are given as in \eqref{[f,g]_2}. Since the variables $x_{d-k+2},\ldots,x_{d-j+1}$ 
whose indices are in $\mb_{k,j}$ for $2\le j \le k-1$ do not appear in $f$, 
$\frac{\partial^{|I_1|}f}{\partial x^{I_1}}=0$ and, consequently, $[f,g]_{2,i,I} = 0$ if $i \ge j$. 
For $i = j-1$, $|I_2 | = |I| = j-1$ and $I_2 = I = \{d-j+2,\ldots,d\}$; in particular, 
$I_2^c = I^c = \{d-k+2,\ldots,d-j+1\}$. 
Applying integration by parts $|I_2|-1$ times over the variables ${x_{d+3-j},\ldots,x_d}$,
which appeared as product factors in $f$, we find 
\begin{align*}
[f,g]_{2,j-2,I} =(-1)^{j}\int_{T_{I^c}} \xb_{d+1-k}^\g \frac{\partial f}{\partial x_{d+2-j}}h_{n-j}(x)\
   d\xb_{d-k+1} d \xb_I,
\end{align*}
where $h_{n-j}$ is a polynomial of degree $n-j$ given by
$$
h_{n-j}(x)=\frac{\partial^{j-2}}{\partial \xb_{d+3-j}}\left\{ \frac{\partial^{j-1}g}{\partial x^{I_2}} (1- |x|)^{j-2}\right\}. 
$$
Now, taking derivative of $f$ in \eqref{equation 11} with respect to $x_{d+2-j}$, we obtain
$$
\frac{\partial f}{\partial x_{d+2-j}}= d\, x_{d-j+3}\cdots x_d 
    P^{(\zero_{k-j+2},\one_{j-2},\g)}_{\nb}(x_{S_{k-j+1}},\xb_{k,j}),
$$
where $d$ is a constant and $\nb=(\zero,\nb_{k,j})$. The polynomial $P^{(\zero_{k-j+2},\one_{j-2},\g)}_{\nb}(x_{S_{k-j+1}},\xb_{k,j})$ is, by Lemma \ref{lem:H_n^d}, an element of 
$\CV_{n-j+1}^{d-(k-j)}(W_{\g,0,\one_{j-2},0})$ when restricted to the face 
$T_{\{d-k+2,\dots,d-j+1\}}^{d-(k-j)}$. Consequently, since $|\nb|=n-j+1$ and $h$ is a polynomial
of degree $n-j$, $[f,g]_{2,j-1,I} =0$ follows from the orthogonality of 
$\CV_{n-j+1}^{d-(k-j)}(W_{\g,0,\one_{j-2},0})$. Hence, $[f,g]_2 =0$ for all $g \in \Pi_{n-1}^d$. 

Putting these together, we have proved that $\la f,g\ra_{\g, - \one_k} =0$ for all $g \in \Pi_{n-1}^d$. Consequently, the proof of Case 2.3 is completed. 

\medskip \noindent
{\it Case 2.4}. $f \in H_n^d (W_{\g,\zero_k})$. From Definition \ref{defn:H_n^d}, we can assume 
\begin{align*}
  f(x) = P_{\nb}^{(\zero,\g)}(x_{d-k+2},\ldots,x_{d+1},\xb_{d-k}),\quad |\nb|=n,
\end{align*}
where $x_{d+1}=1-|x|$ and $\nb=(\zero,\nb_{d-k})$, $\nb_{d-k}=(n_1,\dots,n_{d-k})$. Since 
the first $k$ components of $\nb$ are zero, the Rodrigue formula \eqref{Rodrigue} shows
that the variables whose indices are in $\mb_k$ do not really appear in $f$. Hence, 
$\frac{\partial^{k-1}f}{\partial x^{\mb_k}}=0$ and $\frac{\partial^{i}f}{\partial x^{I}}=0$ 
for $I \subset \mb_k$, from which it follows that $[f,g]_1=0$ and $[f,g]_2=0$, respectively. 
For the integral in $[f,g]_3$, we apply integration by parts to obtain 
\begin{align*}
 [f,g]_3=  -\sum_{j=1}^{d-k+1} {\l_j} \int_{T_{\mb_k}^{d-k+1}} \xb_{d+1-k}^\g  f\left\{(\g_j+1)\frac{\partial g}{\partial x_j}+x_j \frac{\partial^2 g}{\partial {x_j}^2}\right\}d\xb_{d-k+1}.
\end{align*}
Since, by Lemma \ref{lem:H_n^d}, $f \big \vert_{T_{\mb_k}^{d-k+1}} \in \CV_n^{d-k+1} (W_{\g,0})$,
and the term in the curly bracket is a polynomial of degree at most $n-2$, we conclude that 
$[f,g]_3=0$. Finally, by Lemma \ref{lem:H_n^d}, $f \big \vert_{T_{\mb_k^+}^{d-k}} \in 
\CV_n^{d-k} (W_{\g})$, which implies immediately that $[f,g]_4=0$. Thus, it follows from 
\eqref{ipd-decomp} that $ \la f, g\ra_{\g, -\one_k} =0$ for all $g\in \Pi_{n-1}^{d}$.

\medskip
Putting the four cases together, the decomposition \eqref{V-decom} shows that 
$\la f, g\ra_{\g, -\one_k} =0$ for every $f\in \CU_{n}^{d}(W_{\g,-\one_k})$ and all 
$g\in \Pi_{n-1}^{d}$. This completes the proof of (ii) of the theorem for {\bf Case 2}.

\medskip\noindent 
{\bf Case 3}. $k = d+1$. The main terms of the inner product $ \la f, g \ra_{ -\one}$
are essentially the case of $k = d+1$ of $[f,g]_1$ and $[f,g]_2$ in {\bf Case 2}, hence, as
shown in that case, that these two terms are zero for every $f\in \CU_{n}^{d}(W_{-\one})$ 
and for all $g\in \Pi_{n-1}^{d}$. Furthermore, if $f \in x_1 \cdots x_{d+1}\CV_{n-d-1}^d(W_\one)$ 
then $f$ vanishes on  $e_0, e_1, \ldots, e_d$, whereas if $f \in  x_{j,d+1}(\s) 
H_{n-j}^d (W_{(\zero_{d+1-j},\one)\s})$, then $f(e_0)=f(e_1)=\ldots=f(e_d)=0$. Thus, every 
$f\in \CU_{n}^{d}(W_{-\one})$ vanishes at all vertices, so that the last term in $\la f,g \ra_{-\one}$
is zero. Consequently, $\la f, g\ra_{ -\one} =0$ for $\forall f\in \CU_{n}^{d}(W_{-\one})$ and 
$\forall g\in \Pi_{n-1}^{d}$. 

The proof of (ii) in Theorem \ref{thm:main_d>2_2} is completed.  \qed

\subsection{Proof of (iii) in Theorem \ref{thm:main_d>2_2}}
We need to show that $\la f, g\ra_{\g,-1} =0$ for $f \in (1-|x|)  \CV_{n-1}^d(W_{\g,1})$ and
$g \in H_n^d(W_{\g,0})$. The proof uses a property of $H_n^d(W_{\g,0})$, which is of 
interesting in itself. 

By definition, $H_n^d(W_{\g,0}) = H_{n,\{d+1\}}^d(W_{\g,0})$, that is, $S_1=\{d+1\}$ in the item 
2 of Definition \ref{defn:H_n^d}, and $(\g_S, \g_{\ell_d}) = (0, \g', \g_{\ell_{d}})$,
where $\g' = \g_{S\setminus \{d+1\}} = (\g_{\ell_1},\ldots, \g_{\ell_{d-1}})$, and 
$x_S=(x_{S_1},x_{S\setminus S_1}) = (x_{d+1}, x')$, $x' =(x_{\ell_1} ,\ldots, x_{\ell_{d-1}})$. Hence,
by Definition \ref{defn: P(x_S)}, the elements of $H_n^d(W_{\g,0})$ are given by, since 
$n_1 =0$,  
$$
  P_\nb^{(\g_{S},\g_{\ell_d})}(x_S) = (x')^{-\g'}x_{\ell_{d}}^{-\g_{\ell_{d}}} 
      \frac{\partial^{|\nb|}}{ \partial \xb_{S}^{\nb'}}
      \left\{{(x')}^{\g'+\nb'} x_{\ell_{d}}^{\g_{\ell_{d}}+|\nb|}\right\},
$$
where $\nb = (0,\nb')$ and $\partial \xb_{S} =(\partial_{\ell_1, \ell_{d}}, \ldots, \partial_{\ell_{d-1}, \ell_{d}})$.
Consequently, the elements of $H_n^d(W_{\g,0})$ are homogeneous polynomials of degree
$|\nb|$, which satisfy, by Euler's formula, the equation $x_1 \partial_1 g + \ldots +
x_d \partial_d g = n g$. Taking derivative again shows easily that, if $g$ is a homogeneous 
polynomial of degree $n$, then 
$$
  \sum_{i=1}^d x_i^2  \partial_{i}^2 g  + 2 \sum_{1 \le i< j \le d} x_ix_j  \partial_i \partial_j g = n(n-1)g,     
$$
which applies, in particular, to $g \in H_n^d(W_{\g,0})$. On the other hand, by the item (i)
of Lemma \ref{lem:H_n^d} and Lemma \ref{rem}, $g \in H_n^d(W_{\g,0})$ satisfies the equation
$L_{\g,-1} g = -n(n+|\g| + d-1) g$, which is, by \eqref{diff-eqn} 
\begin{align*}
\sum_{i=1}^d x_i(1 -x_i) \partial_{i}^2 g & - 2 \sum_{1 \le i < j \le d} x_i x_j  \partial_i \partial_j g \\
   & + \sum_{i=1}^d \left( \g_i +1-(|\g|+d) x_i \right) \partial_i g  =  -n  \left(n+|\g| +d-1\right)g.
\end{align*}
Adding the two identities together and making use of the Euler's formula again, we conclude
that $g \in H_n^d(W_{\g,0})$ satisfies the following equation
\begin{equation} \label{last-eqn}
  \sum_{i=1}^d x_i \partial_{i}^2 g + \sum_{i=1}^d (\g_i + 1) \partial_i g  = 0.
\end{equation}

Now, let $f(x)=(1-|x|)P_{\nb}^{(\g,1)}(x)$, where $P_{\nb}^{(\g,1)}(x)\in\CV_{n-1}^d(W_{\g,1})$,
and $g \in H_n^d(W_{\g,0})$. Following the proof of Case 1 of the subsection 4.4, we then arrive
at \eqref{[f,g]-case1.1}. The summation over the derivatives of $g$, however, is exactly the 
differential operator in the left hand side of \eqref{last-eqn}, so that $\la f, g\ra_{\g,-1} =0$. 
\qed


\begin{thebibliography}{99}

\bibitem{AFR}
	M. Alfaro, F, Marcell\'an, M. L. Rezola, 
	Estimates for Jacobi-Sobolev type orthogonal polynomials, 
	\textit{Appl. Anal.} \textbf{67} (1997), 157--174.

\bibitem{AMR} 
	M. Alfaro, M. Alvarez de Morales and M. L. Rezola,
	Orthogonality of the Jacobi polynomials with negative integer parameters, 
	\textit{J. Comput. Appl. Math.} \textbf{145} (2002), 379-386.
 
\bibitem{APPR} 
	M. Alfaro, T. E. P\'{e}rez, M. A. Pi\~{n}ar and M. L. Rezola,
	Sobolev orthogonal polynomials: The discrete-continuous case, 
	\textit{Methods Appl. Anal.} \textbf{6} (1999), 593-616.
 
\bibitem{APP} 
	M. Alvarez de Morales, T.E. Perez and M.A. Pi\~{n}ar, 
	Sobolev orthogonality for the Gegenbauer polynomials $\{C_{n}^{\left( -N+1/2\right)}\}_{n\geq 0}$, 
	\textit{J. Comput. Appl. Math.} \textbf{100} (1998) 111-120.

\bibitem{AH}
       K. Atkinson and O. Hansen,
       Solving the nonlinear Poisson equation on the unit disk,
       \textit{J. Integral Equations Appl.}\textbf{17} (2005), 223--241.

\bibitem{BDFP} 
	C. F. Bracciali, A. M. Delgado, L. Fern\'{a}ndez, T. E. P\'{e}rez and M. A. Pi\~{n}ar,
	 New steps on Sobolev orthogonality in two variables, 
	 \textit{J. Comput. Appl. Math}. \textbf{235} (2010), 916-926.

\bibitem{DX} 
	C. F. Dunkl and Y. Xu, 
	\textit{Orthogonal polynomials of several variables}, 
	Encyclopedia of Mathematics and its Applications, vol. \textbf{81}, 
	Cambridge University Press, 2001.

\bibitem{FMPP}
        L. Fern\'{a}ndez, F. Marcell\'{a}n, T. E. P\'{e}rez, M. A. Pi\~{n}ar, 
        Recent trends on two variable orthogonal polynomials,
        in Differential algebra, complex analysis and orthogonal polynomials, 59�86, {\it Contemp. Math.},  
        \textbf{509},  Amer. Math. Soc., Providence, RI, 2010. 

\bibitem{G} 
	W. Gautschi, 
	\textit{Orthogonal Polynomials: Computation and Approximation}, 
	Oxford Univ. Press, 2004.

\bibitem{PI}
         P. Iliev, 
         Krall-Jacobi commutative algebras of partial differential operators, 
         \textit{J. Math. Pures Appl.}  \textbf{(9) 96} (2011),  446-461.
          
\bibitem{KL} 
	K. H. Kwon and L. L. Littlejohn, 
	Sobolev orthogonal polynomials and second-order differential equations, 
	\textit{Rocky Mountain J. Math. } \textbf{28} (1998), 547-594.

\bibitem{LL}
        J. K. Lee and L. L. Littlejohn,            
         Sobolev orthogonal polynomials in two variables 
         and second order partial differential equations,  
         \textit{J. Math. Anal. Appl.} \textbf{322} (2006), 1001-1017.

\bibitem{PX} 
        M. Pi\~{n}ar and Y. Xu, 
        Orthogonal polynomials and partial differential equations on the unit ball, 
        \textit{Proc. Amer. Math. Soc. } \textbf{137} (2009), 2979-2987.

\bibitem{Sa} 
        T. Sauer, The Genuine Bernstein-Durrmeyer operator on a simplex, 
        \textit{Results Math.} \textbf{26} (1994), 99-130.

\bibitem{S} 
        G. Szeg\"{o}, 
        \textit{Orthogonal Polynomials}, fourth ed., vol. 23, 
        American Mathematical Society Colloquium Publications, Providence, 1975.

\bibitem{W} 
       S. Waldron, 
       Multivariate Jacobi polynomials with singular weights, 
       \textit{East J. Approx.} \textbf{13} (2007), 163-183. 

\bibitem{X1} 
       Y. Xu, 
       A family of Sobolev orthogonal polynomials on the unit ball, 
       \textit{J. Approx. Theory} \textbf{138} (2006), 232-241.

\bibitem{X2} 
       Y. Xu, 
       Sobolev orthogonal polynomials defined via gradient on the unit ball, 
       \textit{J. Approx. Theory }\textbf{152} (2008), 52-65.

\end{thebibliography}
\end{document}